\DeclareMathAlphabet{\mathpzc}{OT1}{pzc}{m}{it}
\newcommand{\R}{\mathbb{R}}
\newcommand{\ysf}{\mathsf{y}}
\newcommand{\usf}{\mathsf{u}}
\newcommand{\bsf}{\boldsymbol{\mathsf{b}}}
\newcommand{\fsf}{\mathsf{f}}
\newcommand{\vsf}{\mathsf{v}}
\newcommand{\asf}{\mathsf{a}}
\newcommand{\bbsf}{\mathsf{b}}
\newcommand{\qsf}{\mathsf{q}}
\newcommand{\psf}{\mathsf{p}}
\newcommand{\T}{\mathscr{T}}
\newcommand{\Tr}{\mathbb{T}}
\newcommand{\wsf}{\mathsf{w}}
\newcommand{\norm}[1]{{{|\!|\!|} #1 {|\!|\!|}}}
\newcommand{\V}{\mathbb{V}}
\newcommand{\U}{\mathbb{U}}
\newcommand{\EO}[1]{{\color{black}#1}}
\newcommand{\RR}[1]{{\color{black}#1}}
\newtheorem{remark}[theorem]{Remark}
\numberwithin{equation}{section}
\title{\EO{A} posteriori error estimators for stabilized finite element approximations of an
optimal control problem\thanks{AA is supported by CONICYT through FONDECYT project 1170579. EO is supported by CONICYT through FONDECYT project 3160201. RR is supported by BASAL PFB03 CMM project, Universidad de Chile.}}
\author{Alejandro Allendes\thanks{Departamento de Matem\'atica, Universidad T\'ecnica Federico Santa Mar\'ia, Valpara\'iso, Chile.
\texttt{alejandro.allendes@usm.cl}}
\and
Enrique Ot\'arola\thanks{Departamento de Matem\'atica, Universidad T\'ecnica Federico Santa Mar\'ia, Valpara\'iso, Chile.
\texttt{enrique.otarola@usm.cl}}
\and
Richard Rankin\thanks{Departamento de Matem\'atica, Universidad T\'ecnica Federico Santa Mar\'ia, Valpara\'iso, Chile.
\texttt{richard.rankin@usm.cl}}
}
\date{Draft version of \today.}
\begin{document}

\maketitle

%%%%%%%%%%%%%%%%%%%%%%%%%%%%%%%%%%%%%%%%%%%%%%%%%%%%%%%%%%%%%%%%%%%%%%%%%%%%%%%%%%%%%%%%%%%%%%%%%%%%%%%%%%%%%%%%%%%%%%%%%%%%%%%%%%%%%%%%%%%%%%%%%%%%%%%%%%%%%%%%%%%%%%%%%%%%
\begin{abstract}
We \RR{derive a} posteriori error estimators for an optimal control problem governed by a convection--reaction--diffusion equation; control constraints are also considered. We consider a family of low--order stabilized finite element methods to approximate the solutions to the state and adjoint equations. \RR{We obtain a fully computable a posteriori error estimator for the optimal control problem.} All the constants that appear in the upper bound for the error are fully specified. Therefore, the proposed estimator can be used as a stopping \RR{criterion} in adaptive algorithms. \RR{We also obtain a robust a posteriori error estimator for when the error is measured in a norm that involves the dual norm of the convective derivative. Numerical} examples, in two and three dimensions, are presented to illustrate the theory.
\end{abstract}
%%%%%%%%%%%%%%%%%%%%%%%%%%%%%%%%%%%%%%%%%%%%%%%%%%%%%%%%%%%%%%%%%%%%%%%%%%%%%%%%%%%%%%%%%%%%%%%%%%%%%%%%%%%%%%%%%%%%%%%%%%%%%%%%%%%%%%%%%%%%%%%%%%%%%%%%%%%%%%%%%%%%%%%%%%%%

\begin{keywords}
linear--quadratic optimal control problem; convection--reaction--diffusion equation; stabilized methods; fully computable a posteriori error estimator; \RR{robust a posteriori error estimator.}
\end{keywords}

%%%%%%%%%%%%%%%%%%%%%%%%%%%%%%%%%%%%%%%%%%%%%%%%%%%%%%%%%%%%%%%%%%%%%%%%%%%%%%%%%%%%%%%%%%%%%%%%%%%%%%%%%%%%%%%%%%%%%%%%%%%%%%%%%%%%%%%%%%%%%%%%%%%%%%%%%%%%%%%%%%%%%%%%%%%%
\begin{AMS}
49K20,    %%   Optimal control problems involving partial differential equations
49M25,    %%   Discrete approximations
65K10,    %%   Optimization and variational techniques
65N15,    %%   Error bounds
65N30,    %%   Finite elements, Rayleigh-Ritz and Galerkin methods, finite methods;
65N50,    %%   Mesh generation and refinement
65Y20.    %%   Complexity and performance of numerical algorithms
\end{AMS}

\section{Introduction}
The purpose of this work is to construct and \RR{analyze a} posteriori error estimators for a control--constrained optimal control problem involving a convection--reaction--diffusion equation as state equation. To describe our problem, let $\Omega$ be an open and bounded polytopal domain in $\R^{d}$, $d \in \{2,3\}$, with Lipschitz boundary $\partial \Omega$. Given a desired state $\ysf_{\Omega}\in L^{2}(\Omega)$, we define 
the cost functional
\begin{eqnarray}
\label{functional}
J(\ysf,\usf) = \frac{1}{2} \|\ysf-\ysf_{\Omega}\|_{L^{2}(\Omega)}^{2} + \frac{\vartheta}{2} \|\usf\|_{L^{2}(\Omega)}^{2},
\end{eqnarray}
where $\vartheta > 0$ denotes the so--called regularization parameter. We shall be concerned with the following optimal control problem: Find
\begin{equation}
\label{min}
\min\, J(\ysf,\usf)
\end{equation}
subject to the convection--reaction--diffusion equation
\begin{equation}
\label{state_equation}
-\nu\Delta\ysf + \bsf\cdot\nabla\ysf + \kappa\ysf = \fsf+\usf \quad \textrm{in}~\Omega, \qquad \ysf = 0 \quad \textrm{on}~\partial\Omega,
\end{equation}
and the control constraints
\begin{equation}
\label{control_constraint}
\usf\in\mathbb{U}_{ad}, \qquad \mathbb{U}_{ad}:=\{ \vsf \in L^2(\Omega):~\asf\leq \vsf(\boldsymbol{x}) \leq \bbsf \mbox{ for almost every }\boldsymbol{x}\mbox{ in }\Omega\}.
\end{equation}
Here, the bounds $\asf, \bbsf \in \R$ and are such that $\asf < \bbsf$ \RR{and $\fsf\in L^{2}(\Omega)$. Assumptions on $\nu$, $\bsf$ and $\kappa$} are deferred until later.

The a priori error analysis for standard finite element approximations of problem \eqref{min}--\eqref{control_constraint} has been well established; see \cite{MR756510,MR1993702,MR0686788,MR2122182,MR2843956,troltzsch2009finite} and the references therein. This analysis strongly relies on the error estimates involved in the approximation of \eqref{state_equation}. However, it is well known that applying the standard finite element method to \eqref{state_equation} produces poor results 
when convection--dominated regimes are considered. In order to overcome such a difficulty, in the last few decades a variety of finite element approaches\RR{, such as \emph{stabilized finite element methods},} have been proposed in the literature. In this work, we will focus on low-order conforming stabilized schemes, for which \cite{MR2454024} provides an extensive overview in the subject. In the context of optimal control, the numerical approximation of problem \eqref{min}--\eqref{control_constraint} relies additionally on the discretization of the so--called adjoint equation (see Section 2.12 in \cite{MR2583281}). Since \eqref{min}--\eqref{control_constraint} is intrinsically non--linear and presents a crosswind phenomena, an efficient method for solving such a control problem has to properly treat the oscillatory behaviours that occur when approximating $\bar \ysf$ and its adjoint variable $\bar \psf$ and resolve the boundary layers exhibited by the state and adjoint state. Failure to resolve boundary layers pollutes the numerical solution in the entire domain \cite{MR2595051}. Different stabilized finite element methods have been proposed to solve \eqref{min}--\eqref{control_constraint}; see \cite{MR2302057,MR2486088,MR2178571,MR2647025,weng2015stabilized,MR2463111,MR3246619}.
However, considering only stabilized schemes is not sufficient to efficiently approximate the solution to \eqref{min}--\eqref{control_constraint}; boundary or interior layers and possible geometric singularities need to be resolved. This motivates the design of stabilized adaptive finite element methods.

Adaptive procedures for obtaining finite element solutions are based on the so-called a posteriori error analysis, and it has a solid foundation for elliptic problems; see  \RR{\cite{AObook,BR1978,MR777265,MR2648380,MR3059294,ZZ1987}}. In contrast, the a posteriori error analysis for finite element approximations of optimal control problems has not yet been fully understood. We refer the reader to \cite{MR2434065,MR1887737,MR2373479} for contributions to the theory. An attempt to unify the available results in the literature is presented in \cite{MR3212590}, where the authors derive an important error equivalence that simplifies the a posteriori error analysis to, simply put, provide estimators for the state and adjoint equations. The analysis is based on the energy norm inherited by the state and adjoint equations. Recently, the authors of \cite{MR3485971} provided a general framework that complements the one developed in \cite{MR3212590}, and measures the error in a norm that is motivated by the objective. The analysis relies on the convexity of $\Omega$. Both approaches exploit the first--order optimality conditions to derive a posteriori error estimates. However, the derived error estimates involve several \EO{\textit{unknown}} constants in the analysis, in particular, in the upper bound for the error in terms of the proposed error estimators (see \cite[Theorem 3.2]{MR3212590} and \cite[\EO{Theorem 3.3}]{MR3485971}). \EO{Hence, in real computations, it will be unclear whether over or under estimation of the error has occurred.} In fact, in a practical setting, if the estimator is to be used as a stopping \RR{criterion}, then the constant involved in the upper bound for the error must be fully computable. 
This motivates the design and analysis of fully computable error estimators \cite{MR2652113,MR1648383,MR2114283,MR2253056,MR2423279} for our optimal control problem, which guarantee a genuine upper bound for the error in the sense that the value of the \RR{error estimator} is greater than or equal to the value of the error; see Theorem \ref{th:global_reliability}. 

\RR{One of the main aims} of this work is to develop an a posteriori error estimator with the following features:
\begin{itemize}
\item to be fully computable, in order to have at hand a stopping \RR{criterion} for the adaptive resolution;
\item to be applicable to a wide variety of low--order stabilized methods, allowing different combinations of stabilization terms for the state and adjoint equations.
\end{itemize}

\RR{We follow the a posteriori error analysis from \cite{MR3123245,allendes2015error} in order to obtain a fully computable error estimator for the convection--reaction--diffusion equation \eqref{state_equation}. With this estimator at hand, we provide what we believe is the first fully computable error estimator for the optimal control problem \eqref{min}--\eqref{control_constraint}}. \EO{However, the proposed fully computable estimator is not robust; the constant involved in the lower bound for the errror depends on $\nu$, $\bsf$ and $\kappa$. This motivates another aim of our work:} 
\begin{itemize}
\item \EO{to propose and analyze a robust a posteriori error estimator.}
% be fully computable, in order to have at hand a stopping \RR{criterion} for the adaptive resolution;
% \item to be applicable to a wide variety of low--order stabilized methods, allowing different combinations of stabilization terms for the state and adjoint equations.
\end{itemize}
\EO{To accomplish this task we follow \cite{MR3407239,MR2182149,MR3059294} and measure the error in a norm that involves the dual norm of the convective derivative. We comment that the derived robust estimator is not fully computable.}
% 
% \RR{For the optimal control problem, we also obtain a robust a posteriori error estimator for when the error is measured in a norm that involves the dual norm of the convective derivative.}

This manuscript is organized as follows: Section \ref{Fully_computable} presents a general framework for constructing a fully computable a posteriori error estimator for an advection--reaction--diffusion problem in the presence of a general stabilized method. Section \ref{control_optimo} presents the finite element discretization of the optimal control problem with its a posteriori error analysis. A fully computable a posteriori error bound is provided for a general family of stabilization schemes. This is one of the highlights of our contribution. \RR{In Section \ref{robustsection}, we present the analysis of an alternative a posteriori error estimator which is robust with respect to a norm that involves the dual norm of the convective derivative.} Finally, Section \ref{numerical} presents numerical examples to illustrate the theory. 
\section{Preliminaries}\label{primeraaa}
We shall use standard notation for Sobolev and Lebesgue spaces, norms, and inner products. For a bounded domain $G\subset \RR{\R^d}$ $(\RR{d=2,3})$: $L^{2}(G)$ denotes the space of square integrable functions over $G$, $H^{1}(G)$ is the usual Sobolev space and $H_{0}^{1}(G)$ denotes the subspace of $H^{1}(G)$ consisting of functions whose trace is zero on $\partial G$. Let $(\cdot,\cdot)_{L^{2}(G)}$ denote the inner product in $L^{2}(G)$. 
% (or in $L^{2}(G)^d$ when necessary). 
The norm of $L^2(G)$ is denoted by $\|\cdot\|_{L^{2}(G)}$. We use bold letters to denote the vector-valued counterparts of 
% the Sobolev and Lebesgue 
spaces, e.g. $\mathbf{L}^{2}(G)=L^{2}(G)^d$.

Let $\T = \{ K \}$ be a conforming partition of $\bar \Omega$ into simplices $K$ in the sense of Ciarlet \cite{MR0520174}. We denote by $\Tr$ a collection of conforming and shape regular meshes that are refinements of an initial mesh $\T_0$. For a fixed $\T \in \Tr$, let
\begin{flushleft}
\begin{itemize}
\item $\mathcal{F}$ denote the set of all element edges(2D)/faces(3D); 
\item $\mathcal{F}_{I}\subset \mathcal{F}$ denote the set of interior edges(2D)/faces(3D), $\mathcal{F}_{\partial\Omega}\subset \mathcal{F}$ denote the set of boundary edges(2D)/faces(3D);
\item $\Omega_{n} = \{K\in\T:~ \boldsymbol{x}_{n} \in K \}$, the set of elements for which $\boldsymbol{x}_{n}$ is a vertex;
\item $\mathcal{F}_{n} = \{\gamma\in\mathcal{F}:~ \boldsymbol{x}_{n} \in \gamma \}$ denote the set of element edges(2D)/faces(3D) that have $\boldsymbol{x}_{n}$ as a vertex\RR{.}
\end{itemize}
\end{flushleft}
For an element $K\in\T$, let
\begin{itemize}
\item $\mathbb{P}_{n}(K)$ denote the space of polynomials on $K$ of total degree at most $n$;
\item $\mathcal{F}_{K}\subset \mathcal{F}$ denote the set containing the individual edges(2D)/faces(3D) of $K$;
\item $\Omega_{K}:=\{K'\in\T:~ \mathcal{F}_{K}\cap\mathcal{F}_{K'}\neq
\emptyset\}$;
\item $\mathcal{V}_{K}$ index the set $\{\boldsymbol{x}_{n}\}$ of all the vertices of the element $K$;
\item $|K|$ denote the area/volume of $K$;
\item $h_{K}$ denote the diameter of $K$;
\item $\boldsymbol{n}_{\gamma}^{K}$ denote the unit exterior normal vector to the edge/face $\gamma\in\mathcal{F}_{K}$;
\item $\vsf_{|K}$ denote the restriction of $\vsf$ to the element $K$;
\item $\bar{\vsf}_{K}$ denote the mean value of the function $\vsf$ on $K$, i.e. $\bar{\vsf}_{K}=\tfrac{1}{|K|}(\vsf,1)_{L^{2}(K)}$.
\end{itemize}
For an edge/face $\gamma\in\mathcal{F}$, let:
\begin{itemize}
\item $\mathbb{P}_{n}(\gamma)$ denote the space of polynomials on $\gamma$ of total degree at most $n$;
\item $\Omega_{\gamma}=\{K\in\T:~\gamma\in\mathcal{F}_{K}\}$;
\item $\mathcal{V}_{\gamma}$ index the set $\{\boldsymbol{x}_{n}\}$ of all the vertices of the edge/face $\gamma$;
\item $|\gamma|$ denote the length/area of the edge/face $\gamma$;
\item $h_{\gamma}$ denote the diameter of the edge/face $\gamma$;
\item $\vsf_{|\gamma}$ denote the restriction of $\vsf$ to the edge/face $\gamma$.
\end{itemize}

\RR{For $n\in\mathcal{V}$, we let $\lambda_{n}$ denote the continuous, piecewise linear basis function associated to $\boldsymbol{x}_{n}$, characterized by the conditions $\lambda_{n|K}\in\mathbb{P}_1(K)$ for all $K\in\T$ and $\lambda_n(\boldsymbol{x}_{m})=\delta_{nm}$ for all $m\in\mathcal{V}$, where $\delta_{nm}$ denotes the Kronecker delta}.

For $K\in \T$, we define $\Pi_{K}:L^{2}(K)\rightarrow\mathbb{P}_{1}(K)$
to be the orthogonal projection operator characterized by
\begin{eqnarray}\label{projectionL2K}
\left(f-\Pi_{K}(f),p\right)_{L^{2}(K)}=0\quad\forall p\in\mathbb{P}_{1}(K).
\end{eqnarray}

Finally, in the  manuscript we shall use $C$ to denote any positive constant which is independent of any mesh size and any physical parameter related with the problem.
\section{\EO{A posteriori error analysis for the state equation}}\label{fully_computable}\label{Fully_computable}
In this section\RR{,} we review and extend fully computable a posteriori error estimates for a wide family of low--order stabilized finite element discretizations of \eqref{state_equation}. Before presenting this material, we briefly summarize some results concerning the analysis of problem \eqref{state_equation}.

\subsection{The state equation}
\label{subsec:state_equation}
We consider the following stationary convection--reaction--diffusion problem: Find $\ysf$ such that
\begin{equation}
\label{advection-reaction-diffusion}
-\nu\Delta\ysf + \bsf\cdot\nabla\ysf + \kappa\ysf =  \RR{\qsf} \textrm{  in  } \Omega, \qquad \ysf = 0 \textrm{  on  }\partial\Omega.
\end{equation}
The weak formulation of the previous problems reads: Find $\ysf\in H_{0}^{1}(\Omega)$ such that
\begin{equation}\label{weak_advection-reaction-diffusion}
\mathcal{B}(\ysf,\vsf)=(\RR{\qsf},\vsf)_{L^{2}(\Omega)} \quad \forall \vsf\in H_{0}^{1}(\Omega),
\end{equation}
where, for $\wsf , \vsf \in H_0^1(\Omega)$, the bilinear form $\mathcal{B}$ is defined by
\begin{equation}
\label{forma_B}
\mathcal{B}(\wsf,\vsf):=\nu ( \nabla \wsf,\nabla \vsf)_{\boldsymbol{L}^{2}(G)} + (\bsf \cdot \nabla \wsf + \kappa \wsf,\vsf)_{L^{2}(\Omega)}.
\end{equation}

We assume that the data of problem \eqref{weak_advection-reaction-diffusion} satisfy the following conditions:
\begin{itemize}
\item[\textbf{(A1)}] $\nu$ and $\kappa$ are real and positive constants;
\item[\textbf{(A2)}] $\bsf\in \boldsymbol{W}^{1, \infty}(\Omega)$ and is a solenoidal field, that is, $\mathbf{div}~\bsf=0$;
\item[\textbf{(A3)}] $\RR{\qsf}\in L^{2}(\Omega)$.
\end{itemize}

We define, for $G = \Omega$ or $G\in\T$, and $\vsf \in H ^1(G)$, the norm
\begin{equation}
\label{eq:norm}
\norm{\vsf}_{G}:= \left( \nu\|\nabla \vsf\|_{\boldsymbol{L}^{2}(G)}^{2} + \kappa\|\vsf\|_{L^{2}(G)}^{2} \right)^{1/2} .
\end{equation}
On the basis of \textbf{(A1)}--\textbf{(A3)}, this definition implies that \EO{$\mathcal{B}(\vsf,\vsf) = \norm{\vsf}_{\Omega}^2$} and that $\mathcal{B}(\wsf,\vsf) \leq (1+(\kappa \nu)^{-1/2} \| \bsf\|_{L^{\infty}(\Omega)} ) \norm{\vsf}_{\Omega} \norm{\wsf}_{\Omega} $ for all $\vsf, \wsf \in H_0^1(\Omega)$.  Then, the Lax--Milgram Lemma immediately yields the well--posedness of problem \eqref{weak_advection-reaction-diffusion} \cite{MR2050138,MR2454024}.

To approximate the solution to problem \eqref{weak_advection-reaction-diffusion}, we will consider stabilized finite element methods: Find $\ysf_{\T} \in \mathbb{V}(\T)$ such that
\begin{equation}\label{stabilized_fem}
\mathcal{B}(\ysf_{\T},\vsf_{\T})+\mathcal{S}(\ysf_{\T},\RR{\qsf};\vsf_{\T})=(\RR{\qsf},\vsf_{\T})_{L^{2}(\Omega)} \quad \forall \vsf_{\T}\in \mathbb{V}(\T),
\end{equation}
where $\V(\T)$ denotes the space of continuous piecewise linear functions on $\T$, i.e,
\begin{equation*}
\label{espacio_G}
\mathbb{V}(\T):=
\{
v\in\mathcal{C}^{0}(\overline{\Omega}):~v_{|K}\in\mathbb{P}_{1}(K)~\forall K\in\T~\textrm{and}~v_{|\partial\Omega}=0
\}
\end{equation*}
and $\mathcal{S}$ corresponds to a particular choice of a stabilization term; the election $\mathcal{S} = 0$ corresponds to the standard finite element method without stabilization. We note that $\mathcal{S}$ may contain contributions of the datum $\RR{\qsf}$. In the next subsection, we will be precise about the stabilized terms that are allowed in our analysis. Meanwhile, we will assume that problem \eqref{stabilized_fem} has a unique solution $\ysf_{\T} \in \V(\T)$.

In general, stabilized schemes add mesh-dependent terms to the standard Galerkin formulation of \eqref{advection-reaction-diffusion} with the aim of improving the stability of the numerical method in the regime where the layers are unresolved \cite{MR2454024}. Recently, to improve the accuracy of the schemes, attention has shifted toward the development of a posteriori error estimators, which is the content of the following subsections.
\subsection{Reliability: a fully computable upper bound}\label{subsec:reliability_fully}
In order to construct a fully computable a posteriori error estimator, we follow \cite{MR3123245,allendes2015error}, where the a posteriori error analysis is based on two main ingredients: the construction of equilibrated boundary fluxes and explicit solutions to Neumann-type problems. First, we use \eqref{weak_advection-reaction-diffusion} and integration by parts to arrive at the error equation
\[
\mathcal{B}(\ysf-\ysf_{\T},\vsf)  
= 
\sum_{K\in\T}
\left(
\RR{\qsf}-\bsf\cdot\nabla\ysf_{\T}-\kappa\ysf_{\T}
,\vsf\right)_{L^{2}(K)}
\\
-\sum_{K\in\T}\sum_{\gamma\in\mathcal{F}_K}
\left(
\nu\nabla\ysf_{\T|K}\cdot\boldsymbol{n}_{\gamma}^{K}
,\vsf\right)_{L^{2}(\gamma)}
\]
for all $\vsf\in H_{0}^{1}(\Omega)$. We now introduce boundary fluxes $g_{\gamma,K}\in\mathbb{P}_{1}(\gamma)$ on elements $K\in\T$ and $\gamma\in\mathcal{F}_{K}$, which satisfy
\begin{equation}\label{suma0}
\bullet\mbox{\textbf{Consistency}: }g_{\gamma,K}+g_{\gamma,K'}=0,
\mbox{ if }\gamma\in \mathcal{F}_{K}\cap\mathcal{F}_{K'},~
K,K'\in\T,~K\neq K'.
\end{equation}
We can then incorporate such fluxes into the error equation to see that
\begin{equation}\label{error_equation_2}
 \mathcal{B}(\ysf-\ysf_{\T},\vsf) 
 = 
\sum_{K\in\T}
\left(\left(\mathscr{R}_{K},\vsf\right)_{L^{2}(K)}
+
(\textrm{osc}_{K},\vsf)_{L^{2}(K)}
+
\sum_{\gamma\in\mathcal{F}_{K}}(\mathscr{R}_{\gamma,K},\vsf)_{L^{2}(\gamma)}\right),
\end{equation} 
where, for $K \in \T$ and $\gamma \in \mathcal{F}_{K}$,
\begin{equation}\label{rgk}
\mathscr{R}_{K} := \Pi_{K}(\RR{\qsf})-\Pi_{K}(\bsf\cdot\nabla\ysf_{\T})-\kappa\ysf_{\T|K},\quad
\mathscr{R}_{\gamma,K} := g_{\gamma,K}-\nu\nabla\ysf_{\T|K}\cdot\boldsymbol{n}_{\gamma}^{K},
\end{equation}
\begin{equation}\label{osc}
\textrm{osc}_{K} := \RR{\qsf}-\Pi_{K}(\RR{\qsf})-\bsf\cdot\nabla\ysf_{\T|K}+\Pi_{K}(\bsf\cdot\nabla\ysf_{\T}).
\end{equation}
In addition, for $K \in \T$, we introduce a smooth enough vector function $\boldsymbol{\sigma}_{K}$, such that
\begin{equation}\label{sigmalocal}
\boldsymbol{\sigma}_{K}\cdot\boldsymbol{n}_{\gamma}^{K}  = 
\mathscr{R}_{\gamma,K}\mbox{ on }\gamma, \mbox{ for all }\gamma\in\mathcal{F}_{K},
\end{equation}
and hence, integration by parts yields that
$$
\left(\mathbf{div}~\boldsymbol{\sigma}_{K},\vsf\right)_{L^{2}(K)}
+
\left(\boldsymbol{\sigma}_{K},\nabla\vsf\right)_{\boldsymbol{L}^{2}(K)}
=
\sum_{\gamma\in\mathcal{F}_{K}}(\mathscr{R}_{\gamma,K},\vsf)_{L^{2}(\gamma)}.
$$
Consequently, we can rewrite and bound the error equation as
\begin{align}
\nonumber
\mathcal{B}(\ysf-\ysf_{\T},\vsf) 
& = 
\sum_{K\in\T}\left(
(\mathscr{R}_{K}+\mathbf{div}~\boldsymbol{\sigma}_{K}, \vsf)_{L^{2}(K)}
+
(\boldsymbol{\sigma}_{K},\nabla\vsf)_{\boldsymbol{L}^{2}(K)}
+
(\textrm{osc}_{K},\vsf)_{L^{2}(K)}\right)\\
\label{eq:aux_apost}
& \leq
\left(
\sum_{K\in\T}
\tilde \eta_{K}^{2}
\right)^{1/2}
\norm{\vsf}_{\Omega},
\end{align}
where the error indicator $\tilde \eta_{K}$ is defined by
\begin{equation}\label{indicator}
\tilde \eta_{K}:=\frac{1}{\sqrt{\kappa}}
\|\mathscr{R}_{K}+\mathbf{div}~\boldsymbol{\sigma}_{K}\|_{L^{2}(K)}
+
\frac{1}{\sqrt{\nu}}
\|\boldsymbol{\sigma}_{K}\|_{\boldsymbol{L}^{2}(K)}
+
\mathsf{C}_{\textrm{osc},K}\|\textrm{osc}_{K}\|_{L^{2}(K)}.
\end{equation}
To obtain \eqref{eq:aux_apost}, we have used the Cauchy--Schwarz inequality and  
\[
 | (\textrm{osc}_K, \vsf)_{L^2(K)} | \leq \mathsf{C}_{\mathrm{osc},K} \|\textrm{osc}_{K} \|_{L^2(K)} \norm{\vsf}_{K}, \quad \mathsf{C}_{\textrm{osc},K} = \min\left\{\frac{h_{K}}{\pi\sqrt{\nu}},\frac{1}{\sqrt{\kappa}}\right\}.
\]
The latter holds because, by using \eqref{projectionL2K} and the Poincar\'e inequality \cite{MR2036927,MR0117419}, we have that $|(\textrm{osc}_K, \vsf)_{L^2(K)}| = |(\textrm{osc}_K, \vsf - \bar \vsf_{K})_{L^2(K)} |\leq (h_K / \pi) \| \nabla \vsf \|_{L^2(K)}$ but also $|(\textrm{osc}_K, \vsf)_{L^2(K)}| \leq \kappa^{-1/2}\| \textrm{osc}_K\|_{L^2(K)} \norm{\vsf}_{K}$.

Finally, taking $\vsf=\ysf-\ysf_{\T}$ and using that $\mathcal{B}(\vsf,\vsf)=\norm{\vsf}_{\Omega}^{2}$, we arrive at
\begin{equation}\label{eq:free_upper_bound}
\norm{\ysf-\ysf_{\T}}_{\Omega} \leq \tilde \eta,
\qquad
\tilde \eta:=\left(\sum_{K\in\T}  \tilde{\eta}_{K}^{2} \right)^{1/2}.
\end{equation}
\begin{remark}[fully computable upper bound]
The main advantage of the previous a posteriori error analysis is that it provides an upper bound for the error that is free of any unknown constants. Consequently, the error estimator $\tilde \eta$ can be confidently used as a stopping \EO{criterion} for an adaptive mesh procedure.
\end{remark}

We conclude this subsection by mentioning that the quality of the error estimation depends on the construction of the vector function $\boldsymbol{\sigma}_{K}$ satisfying \eqref{sigmalocal} and the equilibrated boundary fluxes. Before discussing such constructions, we introduce a family of stabilized schemes.
%%%%%%%%%%%%%%%%%%%%%%%%%%%%%%%%%%%%%%%%%%%%%%%%%
%%%%%%%%%%%%%%%%%%%%%%%%%%%%%%%%%%%%%%%%%%%%%%%%%
%%%%%%%%%%%%%%%%%%%%%%%%%%%%%%%%%%%%%%%%%%%%%%%%%
%%%%%%%%%%%%%%%%%%%%%%%%%%%%%%%%%%%%%%%%%%%%%%%%%
%%%%%%%%%%%%%%%%%%%%%%%%%%%%%%%%%%%%%%%%%%%%%%%%%
%%%%%%%%%%%%%%%%%%%%%%%%%%%%%%%%%%%%%%%%%%%%%%%%%
\subsection{Stabilized schemes}\label{stabilizations}
We now describe the stabilized finite element methods that will be the focus of our work. To do this, we write the stabilization term $\mathcal{S}$, based on \cite{allendes2015error}, in terms of local contributions coming from each element, namely,
\begin{eqnarray*}
\mathcal{{S}}(\ysf_{\T},\RR{\qsf};\vsf_{\T}) =
\sum_{K\in \T }\mathcal{{S}}_{K}(\ysf_{\T},\RR{\qsf};\vsf_{\T|K}).
\end{eqnarray*}
The local contributions, $\mathcal{S}_{K}$, for the below mentioned stabilizations, are as follows:
~\\~\\
\underline{\textit{Streamline Upwind Petrov-Galerkin (SUPG)}:} This stabilization technique was introduced by Brooks and Hughes \cite{MR679322}; see also \cite{MR571681,Navert,MR2454024}. The local contributions are
\begin{equation}
\label{eq:SUPG}
\mathcal{{S}}_{K}(\ysf_{\T},\RR{\qsf};\vsf_{\T|K})=\tau_{K}
\left(\bsf\cdot\nabla\ysf_{\T}+\kappa\ysf_{\T}-\RR{\qsf}, \bsf\cdot\nabla \vsf_{\T}
\right)_{L^{2}(K)}.
\end{equation}
\underline{\textit{Galerkin Least Squares (GLS)}}: This method was introduced in \cite{MR1002621}; see also \cite{MR2149365,MR2454024}. The local contributions are
\begin{equation}
 \label{eq:GLS}
\mathcal{{S}}_{K}(\ysf_{\T},\RR{\qsf};\vsf_{\T|K})=\tau_{K}
\left( \bsf\cdot\nabla\ysf_{\T}+\kappa\ysf_{\T}-\RR{\qsf},
\bsf\cdot\nabla \vsf_{\T} + \kappa\vsf_{\T}
\right)_{L^{2}(K)}.
\end{equation}
\underline{\textit{Continuous Interior Penalty (CIP)}}: This stabilization method was proposed by Douglas and Dupont \cite{MR0440955}. Upon defining $\llbracket \bsf\cdot\nabla\ysf_{\T}\rrbracket_{\gamma,K}:=\bsf\cdot\nabla(\ysf_{\T|K}-\ysf_{\T|K^\gamma})$ with $K^\gamma$ being the element that shares $\gamma$ with $K$, the local contributions are
\begin{equation}
\label{eq:CIP}
\mathcal{{S}}_{K}(\ysf_{\T},\RR{\qsf};\vsf_{\T|K}):=\sum_{\gamma\in\mathcal{F}_{K}\cap\mathcal{F}_{I}}\tau_{\gamma}
\left(
\llbracket \bsf\cdot\nabla\ysf_{\T}\rrbracket_{\gamma,K},  \bsf\cdot\nabla\vsf_{\T|K}  \right)_{L^{2}(\gamma)}.
\end{equation}
\underline{\textit{Edge Stabilization (ES)}}: This technique was proposed by Burman and Hansbo in \cite{MR2068903} (see also \cite{MR2192329,MR2454024}). Upon defining $\llbracket \cdot \rrbracket$ to be the usual jump on internal edges/faces and $K^\gamma$ to be the element that shares $\gamma$ with $K$, the local contributions are
\begin{equation}
\label{eq:ES}
\mathcal{{S}}_{K}(\ysf_{\T},\RR{\qsf};\vsf_{\T|K}):=\sum_{\gamma\in\mathcal{F}_{K}\cap\mathcal{F}_{I}}\tau_{\gamma}
\left(
\llbracket \nabla \ysf_{\T} \cdot \boldsymbol{n}_{\gamma} \rrbracket , \nabla \vsf_{\T|K} \cdot\boldsymbol{n}_{\gamma}^{K} (h_{K}^{2}+h_{K^\gamma}^{2})\right)_{L^{2}(\gamma)}.
\end{equation}
In all the previous schemes $\tau_{K}$ and $\tau_{\gamma}$ denote nonnegative stabilization parameters that can vary from one method to another.
%%%%%%%%%%%%%%%%%%%%%%%%%%%%%%%%%%%%%%%%%%%%%%%%%%%%%%%%%%%%%%%%%%%%%%%%%%%%%%%
%%%%%%%%%%%%%%%%%%%%%%%%%%%%%%%%%%%%%%%%%%%%%%%%%%%%%%%%%%%%%%%%%%%%%%%%%%%%%%%
%%%%%%%%%%%%%%%%%%%%%%%%%%%%%%%%%%%%%%%%%%%%%%%%%%%%%%%%%%%%%%%%%%%%%%%%%%%%%%%
%%%%%%%%%%%%%%%%%%%%%%%%%%%%%%%%%%%%%%%%%%%%%%%%%%%%%%%%%%%%%%%%%%%%%%%%%%%%%%%
%%%%%%%%%%%%%%%%%%%%%%%%%%%%%%%%%%%%%%%%%%%%%%%%%%%%%%%%%%%%%%%%%%%%%%%%%%%%%%%
%%%%%%%%%%%%%%%%%%%%%%%%%%%%%%%%%%%%%%%%%%%%%%%%%%%%%%%%%%%%%%%%%%%%%%%%%%%%%%%
%%%%%%%%%%%%%%%%%%%%%%%%%%%%%%%%%%%%%%%%%%%%%%%%%%%%%%%%%%%%%%%%%%%%%%%%%%%%%%%
\subsection{Construction of the equilibrated fluxes}\label{boundary_fluxes}
\RR{We now describe a procedure for obtaining a set of equilibrated fluxes $\{g_{\gamma,K}: \,\gamma\in\mathcal{F}_K,\,K\in\T\}$ which satisfy \eqref{suma0} and
\begin{itemize}
\item \textbf{First order equilibration}: for all $\lambda\in\mathbb{P}_{1}(K)$ and all $K\in\T$,
\begin{align}\label{eq:first_order}
0&=(\RR{\qsf},\lambda)_{L^{2}(K)}-\mathcal{B}_{K}\left(\ysf_{\T},\lambda\right)+
\sum_{\gamma\in\mathcal{F}_{K}}\left(g_{\gamma,K},\lambda\right)_{L^{2}(\gamma)}
-\mathcal{S}_{K}\left(\ysf_{\T},\RR{\qsf};\lambda\right),
\end{align}
where $\mathcal{B}_{K}\left(\ysf_{\T},\lambda\right)=\nu (\nabla \ysf_{\T} , \nabla \lambda)_{\boldsymbol{L}^2(K)}+(\boldsymbol{b}\cdot\nabla\ysf_{\T}+\kappa\ysf_{\T},\lambda)_{L^2(K)}$.
\end{itemize}
In addition, the equilibrated fluxes satisfy a property, namely \eqref{gmJ}, that will be used to prove that the local error indicators defined by \eqref{indicator} are locally efficient. 

Let
\begin{equation}\label{jumps2}
\langle J\rangle_{\gamma,K}:=
\left\{
\begin{array}{cl}
\tfrac{1}{2}(J_{\gamma,K}-J_{\gamma,K'}) & \textrm{if}~\gamma\in \mathcal{F}_{K}\cap\mathcal{F}_{K'},~K\neq K',\\
J_{\gamma,K} & \textrm{if}~\gamma\in \mathcal{F}_{K}\cap\mathcal{F}_{\partial\Omega},
\end{array}
\right.
\end{equation}
where $J_{\gamma,K}:=\nu \nabla\ysf_{\T|K}\cdot\boldsymbol{n}_{\gamma}^{K}$. For every $i\in\mathcal{V}$, let $\{\xi_{K,i}: \,K\in\Omega_{i}\}$ be a solution to the linear system of equations
\begin{equation}\label{notunique}
\frac{1}{2}\sum_{K'\in \Omega_{K}\cap\Omega_{i}}\left(\xi_{K,i}-\xi_{K',i}\right)+
\sum_{\gamma\in\mathcal{F}_{K}\cap\mathcal{F}_{i}\cap\mathcal{F}_{\partial\Omega}}\xi_{K,i}=
\Delta_{K}\left(\lambda_{i}\right)\quad\forall~K\in\Omega_{i}
\end{equation}
where
\begin{equation*}\label{DeltaK}
\Delta_{K}\left(\lambda_{i}\right)=
\mathcal{B}_{K}(\ysf_{\T},\lambda_{i})-\left(\RR{\qsf},\lambda_{i}\right)_{L^{2}(K)}-
\sum_{\gamma\in\mathcal{F}_{K}}\left(\langle J \rangle_{\gamma,K},\lambda_{i}\right)_{L^{2}(\gamma)}+\mathcal{S}_{K}\left(\ysf_{\T},\RR{\qsf};\lambda_{i|K}\right).
\end{equation*}
In terms of the solutions to these systems of equations, we define
\begin{equation}\label{unique}
\mu_{K,i}^{\gamma}=\left\{
\begin{array}{ll}
\frac{1}{2}\left(\xi_{K,i}-\xi_{K',i}\right)+\left(\langle J \rangle_{\gamma,K},
\lambda_{i}\right)_{L^{2}(\gamma)}
\quad &\textrm{if $\gamma\in\mathcal{F}_{K}\cap\mathcal{F}_{K'}$, $K\ne K'$,}\\
\xi_{K,i}+\left(\langle J \rangle_{\gamma,K},
\lambda_{i}\right)_{L^{2}(\gamma)}
\quad &\textrm{if $\gamma\in\mathcal{F}_{K}\cap\mathcal{F}_{\partial\Omega}$,}\\
\end{array} \right.
\end{equation}
for $i\in\mathcal{V}$, $K\in\Omega_i$ and $\gamma\in\mathcal{F}_i$. Now, the terms $\mathcal{S}_{K}$ are defined in such a way that
\begin{equation*}
\sum_{K\in\Omega_{i}}\Delta_{K}\left(\lambda_{i}\right)=\mathcal{B}(\ysf_{\T},\lambda_{i})-\left(\RR{\qsf},\lambda_{i}\right)_{L^{2}(\Omega)}-
\sum_{K\in\Omega_{i}}\sum_{\gamma\in\mathcal{F}_{K}}\left(\langle J \rangle_{\gamma,K},\lambda_{i}\right)_{L^{2}(\gamma)}+\mathcal{S}\left(\ysf_{\T},\RR{\qsf};\lambda_{i}\right)
\end{equation*}
for $i\in\mathcal{V}$. Hence, we can conclude that
\begin{equation*}
\sum_{K\in\Omega_{i}}\Delta_{K}\left(\lambda_{i}\right)=0\quad \forall\, i\in\mathcal{V}: \boldsymbol{x}_i\notin\partial\Omega
\end{equation*}
upon taking $v_{\T}=\lambda_{i}$ in \eqref{stabilized_fem} and noticing that
\begin{equation*}
\sum_{K\in\Omega_{i}}\sum_{\gamma\in\mathcal{F}_{K}}\left(\langle J \rangle_{\gamma,K},\lambda_{i}\right)_{L^{2}(\gamma)}=0\quad \forall\, i\in\mathcal{V}: \boldsymbol{x}_i\notin\partial\Omega.
\end{equation*}
It then follows from \cite[Lemma 5]{Ainsworthfluxes2007} (and its three dimensional analog which can be proved using the same arguments) that, for $i\in\mathcal{V}$,
\begin{itemize}
\item if $\boldsymbol{x}_i\in\partial\Omega$, then \eqref{notunique} has a unique solution;
\item if $\boldsymbol{x}_i\not\in\partial\Omega$, then solutions to \eqref{notunique} exist and are of the form $\{\xi_{K,i}+c_i,\,K\in\Omega_{i}\}$, where $c_i$ is any constant and $\{\xi_{K,i},\,K\in\Omega_{i}\}$ is any solution to \eqref{notunique}.
\end{itemize}
Consequently, the $\mu_{K,i}^{\gamma}$ are uniquely defined by \eqref{unique}. This is due to the fact that, if $\boldsymbol{x}_i\not\in\partial\Omega$, the solution to \eqref{notunique} only appears in \eqref{unique} as $\xi_{K,i}-\xi_{K',i}$ and so the nonuniqueness cancels out. Hence, for each $i\in\mathcal{V}$, the $\mu_{K,i}^{\gamma}$, for $K\in\Omega_i$ and $\gamma\in\mathcal{F}_i$, can be computed using \eqref{unique} after obtaining a solution to \eqref{notunique}.

For $\gamma\in\mathcal{F}_K$ and $K\in\T$, we define
\begin{equation}\label{fluxdef}
g_{\gamma,K}=
\frac{d}{|\gamma|}\sum_{j\in\mathcal{V}_{\gamma}}\mu_{K,j}^{\gamma}
\left((d+1)\lambda_{j}-1\right)
\end{equation}
which is such that $g_{\gamma,K}\in\mathbb{P}_1(\gamma)$ and
\begin{equation}\label{fluxmomdef}
\left(g_{\gamma,K},\lambda_{i}\right)_{L^{2}(\gamma)}=\mu_{K,i}^{\gamma}\mbox{ for all }i\in\mathcal{V}_\gamma.
\end{equation}

Now, let $\gamma\in \mathcal{F}_{K}\cap\mathcal{F}_{K'},~
K,K'\in\T,~K\neq K'$. From \eqref{unique} and \eqref{jumps2} it can be seen that
\begin{equation*}
\begin{split}
\mu_{K,j}^{\gamma}
&=
\frac{1}{2}\left(\xi_{K,j}-\xi_{K',j}\right)+\left(\langle J \rangle_{\gamma,K},
\lambda_{j}\right)_{L^{2}(\gamma)}
\\
&=
\frac{1}{2}\left(\xi_{K,j}-\xi_{K',j}\right)+\left(\frac{1}{2}(J_{\gamma,K}-J_{\gamma,K'}),
\lambda_{j}\right)_{L^{2}(\gamma)}
\end{split}
\end{equation*}
and, similary,
\begin{equation*}
\begin{split}
\mu_{K',j}^{\gamma}
&=
\frac{1}{2}\left(\xi_{K',j}-\xi_{K,j}\right)+\left(\frac{1}{2}(J_{\gamma,K'}-J_{\gamma,K}),
\lambda_{j}\right)_{L^{2}(\gamma)}.
\end{split}
\end{equation*}
Consequently, by \eqref{fluxdef} we have that
\begin{equation*}
g_{\gamma,K}+g_{\gamma,K'}
=\frac{d}{|\gamma|}\sum_{j\in\mathcal{V}_{\gamma}} \left(\mu_{K,j}^{\gamma}+\mu_{K',j}^{\gamma}\right) \left((d+1)\lambda_{j}-1\right)
=0
\end{equation*}
and hence \eqref{suma0} is satisfied.

Now, let $i\in\mathcal{V}$ and $K\in\Omega_i$. By \eqref{fluxmomdef} and \eqref{unique} we have that
\begin{equation*}
\frac{1}{2}\sum_{K'\in \Omega_{K}\cap\Omega_{i}}\left(\xi_{K,i}-\xi_{K',i}\right)=\sum_{\gamma\in \mathcal{F}_{K}\cap\mathcal{F}_{i}:\gamma\not\in\mathcal{F}_{\partial\Omega}}\left(g_{\gamma,K}-\langle J \rangle_{\gamma,K},
\lambda_{i}\right)_{L^{2}(\gamma)}
\end{equation*}
and
\begin{equation*}
\sum_{\gamma\in\mathcal{F}_{K}\cap\mathcal{F}_{i}\cap\mathcal{F}_{\partial\Omega}}\xi_{K,i}=\sum_{\gamma\in\mathcal{F}_{K}\cap\mathcal{F}_{i}\cap\mathcal{F}_{\partial\Omega}}\left(g_{\gamma,K}-\langle J \rangle_{\gamma,K},
\lambda_{i}\right)_{L^{2}(\gamma)}.
\end{equation*}
Consequently,
\begin{equation*}
\begin{split}
\frac{1}{2}\sum_{K'\in \Omega_{K}\cap\Omega_{i}}\left(\xi_{K,i}-\xi_{K',i}\right)+
\sum_{\gamma\in\mathcal{F}_{K}\cap\mathcal{F}_{i}\cap\mathcal{F}_{\partial\Omega}}\xi_{K,i}
=&\sum_{\gamma\in \mathcal{F}_{K}\cap\mathcal{F}_{i}}\left(g_{\gamma,K}-\langle J \rangle_{\gamma,K},
\lambda_{i}\right)_{L^{2}(\gamma)}
\\
=&\sum_{\gamma\in \mathcal{F}_{K}}\left(g_{\gamma,K}-\langle J \rangle_{\gamma,K},
\lambda_{i}\right)_{L^{2}(\gamma)}
\end{split}
\end{equation*}
since $\lambda_{i|\gamma}=0$ for $\gamma\in\mathcal{F}_{K}\setminus(\mathcal{F}_{K}\cap\mathcal{F}_{i})$. Therefore, from \eqref{notunique} we can see that
\begin{equation*}
0=(\RR{\qsf},\lambda_i)_{L^{2}(K)}-\mathcal{B}_{K}\left(\ysf_{\T},\lambda_i\right)+
\sum_{\gamma\in\mathcal{F}_{K}}\left(g_{\gamma,K},\lambda_i\right)_{L^{2}(\gamma)}
-\mathcal{S}_{K}\left(\ysf_{\T},\RR{\qsf};\lambda_i\right).
\end{equation*}
The fact that \eqref{eq:first_order} is satisfied then follows since $(\cdot,\cdot)_{L^{2}(K)}$, $\mathcal{B}_{K}\left(\cdot,\cdot\right)$, $\left(\cdot,\cdot\right)_{L^{2}(\gamma)}$ and $\mathcal{S}_{K}\left(\cdot,\cdot;\cdot\right)$ are linear in their final arguments and $\{\lambda_{i|K},\,i\in\mathcal{V}_K\}$ is a basis for $\mathbb{P}_1(K)$.

Furthermore, following the arguments in the proof of \cite[Theorem 6.2]{AObook}, yields that
\[
\|g_{\gamma,K}-\langle J \rangle_{\gamma,K}\|_{L^{2}(\gamma)} \leq C \tfrac{1}{\sqrt{|\gamma|}}\sum_{n\in\mathcal{V}_{\gamma}}\sum_{K'\in\Omega_{n}} |\Delta_{K'}(\lambda_n)|
\]
and that
\[
|\Delta_{K'}(\lambda_n)|\leq 
\sqrt{|K'|}\|\mathscr{R}_{K'}\|_{L^{2}(K')}+
\sum_{\gamma'\in\mathcal{F}_{K'}\cap\mathcal{F}_{n}}
\sqrt{|\gamma'|}\|\llbracket J_{\gamma'}\rrbracket\|_{L^{2}(\gamma')} +
|\mathcal{S}_{K'}(\ysf_{\T},\RR{\qsf};\lambda_{n|K'})|.
\]
Consequently, for $K \in\T$ and $\gamma\in\mathcal{F}_{K}$, we have that
\begin{equation}\label{gmJ}
\begin{split}
\left(\tfrac{h_{K}}{\nu}\right)^{1/2} \|g_{\gamma,K}-\langle J\rangle_{\gamma,K}\|_{L^{2}(\gamma)}
\leq C
\sum_{n\in\mathcal{V}_{\gamma}}\sum_{K'\in\Omega_{n}}
\Bigg(
\tfrac{h_{K'}}{\sqrt{\nu}}\|\mathscr{R}_{K'}\|_{L^{2}(K')}
\\
+\sum_{\gamma'\in\mathcal{F}_{K'}\cap\mathcal{F}_{n}}
\left(\tfrac{h_{K'}}{\nu}\right)^{1/2}\|\llbracket J_{\gamma'}\rrbracket\|_{L^{2}(\gamma')} +
\left(\tfrac{h_{K'}^{2-d}}{\nu}\right)^{1/2}
|\mathcal{S}_{K'}(\ysf_{\T},\RR{\qsf};\lambda_{n|K'})|
\Bigg).
\end{split}
\end{equation}}
%%%%%%%%%%%%%%%%%%%%%%%%%%%%%%%%%%%%%%%%%%%%%%%%%%%%%%%%%%%%%
%%%%%%%%%%%%%%%%%%%%%%%%%%%%%%%%%%%%%%%%%%%%%%%%%%%%%%%%%%%%%
%%%%%%%%%%%%%%%%%%%%%%%%%%%%%%%%%%%%%%%%%%%%%%%%%%%%%%%%%%%%%
%%%%%%%%%%%%%%%%%%%%%%%%%%%%%%%%%%%%%%%%%%%%%%%%%%%%%%%%%%%%%
%%%%%%%%%%%%%%%%%%%%%%%%%%%%%%%%%%%%%%%%%%%%%%%%%%%%%%%%%%%%%
%%%%%%%%%%%%%%%%%%%%%%%%%%%%%%%%%%%%%%%%%%%%%%%%%%%%%%%%%%%%%
%%%%%%%%%%%%%%%%%%%%%%%%%%%%%%%%%%%%%%%%%%%%%%%%%%%%%%%%%%%%%
%%%%%%%%%%%%%%%%%%%%%%%%%%%%%%%%%%%%%%%%%%%%%%%%%%%%%%%%%%%%%
\subsection{Construction of $\boldsymbol{\sigma}_{K}$}
In order to obtain a fully computable error estimator, we choose $\boldsymbol{\sigma}_{K}\in\mathbb{P}_2(K)\times\mathbb{P}_2(K)$ to be a solution to
\begin{equation}\label{sigmadiv}
\left\{
\begin{array}{rcl}
-\mathbf{div}~\boldsymbol{\sigma}_{K} & = & p_{K} \quad \mbox{in }K,\\
\boldsymbol{\sigma}_{K}\cdot\boldsymbol{n}_{\gamma}^{K} & = & p_{\gamma,K} \quad \mbox{on }\gamma\mbox{ for all }\gamma\in\mathcal{F}_{K},
\end{array}
\right.
\end{equation}
where
\begin{equation}\label{pgk}
p_K = \displaystyle \mathscr{R}_{K}- \overline{\mathscr{R}}_{K}
-\frac{1}{|K|}\sum_{\gamma\in\mathcal{F}_{K}}\left(\mathscr{R}_{\gamma,K},1\right)_{L^{2}(\gamma)}\in\mathbb{P}_1(K), \quad
p_{\gamma,K} = \mathscr{R}_{\gamma,K}\in\mathbb{P}_1(\gamma)
\end{equation}
\RR{where $\mathscr{R}_{K}$ and $\mathscr{R}_{\gamma,K}$ are defined in \eqref{rgk}.} Since the data of problem \eqref{sigmadiv} satisfies a constant equilibration condition, that is,
\[
\begin{split}
\sum_{\gamma\in\mathcal{F}_{K}}\left(p_{\gamma,K},1\right)_{L^{2}(\gamma)}
+\left(p_{K},1\right)_{L^{2}(K)}=0,
\end{split}
\]
then \RR{\cite[Theorem 6.3]{allendes2015error}} provides \RR{an explicit formula for} a solution to \eqref{sigmadiv} that satisfies
\begin{equation}\label{cota_sigmainitial}
\|\boldsymbol{\sigma}_{K}\|_{\boldsymbol{L}^{2}(K)}
\leq C
\left(h_{K}^{1/2}\sum_{\gamma\in\mathcal{F}_{K}}
\|p_{\gamma,K}\|_{L^{2}(\gamma)}
+
h_{K}\left\|p_{K}\right\|_{L^{2}(K)}
\right).
\end{equation}
We note that \eqref{cota_sigmainitial} will also be satisfied by the $\boldsymbol{\sigma}_{K}\in\mathbb{P}_2(K)\times\mathbb{P}_2(K)$ which satisfies \eqref{sigmadiv} and is such that $\|\boldsymbol{\sigma}_{K}\|_{\boldsymbol{L}^{2}(K)}$ is minimized. \RR{Furthermore, once the discrete solution is obtained, the only problems that have to be solved in order to compute $\boldsymbol{\sigma}_{K}$ and $g_{\gamma,K}$ are local problems of size at most $\max \{ \#\Omega_i: i\in\mathcal{V}\}$. Hence, once the number of degrees of freedom is sufficiently large, the cost of obtaining the estimator should be inexpensive when compared with the cost of obtaining the solution to \ref{stabilized_fem}.}
%%%%%%%%%%%%%%%%%%%%%%%%%%%%%%%%%%%%%%%%%%%%%%%%%%%%%%%%%%%%%
%%%%%%%%%%%%%%%%%%%%%%%%%%%%%%%%%%%%%%%%%%%%%%%%%%%%%%%%%%%%%
%%%%%%%%%%%%%%%%%%%%%%%%%%%%%%%%%%%%%%%%%%%%%%%%%%%%%%%%%%%%%
%%%%%%%%%%%%%%%%%%%%%%%%%%%%%%%%%%%%%%%%%%%%%%%%%%%%%%%%%%%%%
%%%%%%%%%%%%%%%%%%%%%%%%%%%%%%%%%%%%%%%%%%%%%%%%%%%%%%%%%%%%%
%%%%%%%%%%%%%%%%%%%%%%%%%%%%%%%%%%%%%%%%%%%%%%%%%%%%%%%%%%%%%
%%%%%%%%%%%%%%%%%%%%%%%%%%%%%%%%%%%%%%%%%%%%%%%%%%%%%%%%%%%%%
%%%%%%%%%%%%%%%%%%%%%%%%%%%%%%%%%%%%%%%%%%%%%%%%%%%%%%%%%%%%%
\subsection{Final fully computable upper bound}
Gathering all our findings of the previous sections, allows us to state the following reliability result.
\begin{theorem}[fully computable upper bounds]\label{TH:upper_bound_state_0}
Let $\ysf \in H^1_0(\Omega)$ and $\ysf_{\T} \in \V(\T)$ be the solutions to problems \eqref{weak_advection-reaction-diffusion} and \eqref{stabilized_fem}, respectively. Then, we have the following fully computable upper bound for the energy norm of the error:
\begin{align}\label{eq:final_upper_bound_0}
\norm{\ysf-\ysf_{\T}}_{\Omega} \leq \eta=\left(\sum_{K\in\T}\eta_{K}^{2}\right)^{1/2},
\end{align} 
where the error indicators $\eta_K$ are defined by
\begin{equation}\label{final_reliability}
\eta_{K}:=\frac{1}{\sqrt{\kappa|K|}}
\left|\mathcal{S}_{K}\left(\ysf_{\T},\RR{\qsf};1\right)\right|
+
\frac{1}{\sqrt{\nu}}
\|\boldsymbol{\sigma}_{K}\|_{\boldsymbol{L}^{2}(K)}
+
\mathsf{C}_{\mathrm{osc},K}\|\mathrm{osc}_{K}\|_{L^{2}(K)}.
\end{equation}
\end{theorem}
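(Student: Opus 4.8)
The plan is to start from the general, constant-free bound \eqref{eq:free_upper_bound}, namely
$\norm{\ysf-\ysf_{\T}}_{\Omega}\le\tilde\eta$ with
$\tilde\eta_{K}=\kappa^{-1/2}\|\mathscr{R}_{K}+\mathbf{div}\,\boldsymbol{\sigma}_{K}\|_{L^{2}(K)}+\nu^{-1/2}\|\boldsymbol{\sigma}_{K}\|_{\boldsymbol{L}^{2}(K)}+\mathsf{C}_{\mathrm{osc},K}\|\mathrm{osc}_{K}\|_{L^{2}(K)}$, and show that with the specific choices of $\boldsymbol{\sigma}_{K}$ and $g_{\gamma,K}$ constructed in \S\ref{boundary_fluxes}--\S\ref{sigmadiv}'s subsection, the first term simplifies exactly to $(\kappa|K|)^{-1/2}|\mathcal{S}_{K}(\ysf_{\T},\qsf;1)|$. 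The second and third terms are already of the desired form, so the whole argument reduces to computing $\mathscr{R}_{K}+\mathbf{div}\,\boldsymbol{\sigma}_{K}$.

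**First I would** use the definition \eqref{sigmadiv}, which prescribes $-\mathbf{div}\,\boldsymbol{\sigma}_{K}=p_{K}$ with $p_{K}$ given in \eqref{pgk}. Hence
$\mathscr{R}_{K}+\mathbf{div}\,\boldsymbol{\sigma}_{K}=\mathscr{R}_{K}-p_{K}=\overline{\mathscr{R}}_{K}+\tfrac{1}{|K|}\sum_{\gamma\in\mathcal{F}_{K}}(\mathscr{R}_{\gamma,K},1)_{L^{2}(\gamma)}$,
which is a constant on $K$ equal to
$\tfrac{1}{|K|}\bigl[(\mathscr{R}_{K},1)_{L^{2}(K)}+\sum_{\gamma\in\mathcal{F}_{K}}(\mathscr{R}_{\gamma,K},1)_{L^{2}(\gamma)}\bigr]$.
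Therefore $\|\mathscr{R}_{K}+\mathbf{div}\,\boldsymbol{\sigma}_{K}\|_{L^{2}(K)}=|K|^{-1/2}\bigl|(\mathscr{R}_{K},1)_{L^{2}(K)}+\sum_{\gamma\in\mathcal{F}_{K}}(\mathscr{R}_{\gamma,K},1)_{L^{2}(\gamma)}\bigr|$.

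**Next I would** identify the bracketed quantity with $-\mathcal{S}_{K}(\ysf_{\T},\qsf;1)$ by invoking the first-order equilibration property \eqref{eq:first_order} with the admissible choice $\lambda=1\in\mathbb{P}_{1}(K)$. Unwinding the definitions: $(\mathscr{R}_{K},1)_{L^{2}(K)}=(\Pi_{K}(\qsf)-\Pi_{K}(\bsf\cdot\nabla\ysf_{\T})-\kappa\ysf_{\T},1)_{L^{2}(K)}=(\qsf-\bsf\cdot\nabla\ysf_{\T}-\kappa\ysf_{\T},1)_{L^{2}(K)}$ since $\Pi_{K}$ preserves $L^{2}(K)$-inner products against constants by \eqref{projectionL2K}; and $(\mathscr{R}_{\gamma,K},1)_{L^{2}(\gamma)}=(g_{\gamma,K}-\nu\nabla\ysf_{\T|K}\cdot\boldsymbol{n}_{\gamma}^{K},1)_{L^{2}(\gamma)}$. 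Summing and using $\mathcal{B}_{K}(\ysf_{\T},1)=(\bsf\cdot\nabla\ysf_{\T}+\kappa\ysf_{\T},1)_{L^{2}(K)}$ (the diffusion term drops because $\nabla 1=0$), the right-hand side of \eqref{eq:first_order} with $\lambda=1$ reads precisely $(\qsf,1)_{L^{2}(K)}-\mathcal{B}_{K}(\ysf_{\T},1)+\sum_{\gamma}(g_{\gamma,K},1)_{L^{2}(\gamma)}-\mathcal{S}_{K}(\ysf_{\T},\qsf;1)=\bigl[(\mathscr{R}_{K},1)_{L^{2}(K)}+\sum_{\gamma}(\mathscr{R}_{\gamma,K},1)_{L^{2}(\gamma)}\bigr]-\mathcal{S}_{K}(\ysf_{\T},\qsf;1)$, and \eqref{eq:first_order} forces this to vanish. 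Hence the bracket equals $\mathcal{S}_{K}(\ysf_{\T},\qsf;1)$, so $\kappa^{-1/2}\|\mathscr{R}_{K}+\mathbf{div}\,\boldsymbol{\sigma}_{K}\|_{L^{2}(K)}=(\kappa|K|)^{-1/2}|\mathcal{S}_{K}(\ysf_{\T},\qsf;1)|$. Substituting into $\tilde\eta_{K}$ gives $\tilde\eta_{K}=\eta_{K}$, and \eqref{eq:final_upper_bound_0} follows from \eqref{eq:free_upper_bound}.

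**The main obstacle** is purely bookkeeping: one must be careful that the $\boldsymbol{\sigma}_{K}$ and fluxes $g_{\gamma,K}$ of \S\ref{boundary_fluxes} genuinely satisfy \eqref{sigmadiv} and \eqref{eq:first_order} with the same data, and that the constant-equilibration identity $\sum_{\gamma}(p_{\gamma,K},1)_{L^{2}(\gamma)}+(p_{K},1)_{L^{2}(K)}=0$ used to invoke \cite[Theorem 6.3]{allendes2015error} is compatible (it is, by construction of $p_{K}$); there is no genuine analytic difficulty, the only subtlety being to confirm that the projection $\Pi_{K}$ is harmless against the constant test function and that $\boldsymbol{\sigma}_{K}\cdot\boldsymbol{n}_{\gamma}^{K}=\mathscr{R}_{\gamma,K}$ matches \eqref{sigmalocal} so that \eqref{eq:aux_apost}--\eqref{eq:free_upper_bound} apply verbatim.
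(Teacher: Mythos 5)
Your proposal is correct and follows essentially the same route as the paper: start from \eqref{eq:free_upper_bound}, use $-\mathbf{div}\,\boldsymbol{\sigma}_{K}=p_{K}$ to reduce $\mathscr{R}_{K}+\mathbf{div}\,\boldsymbol{\sigma}_{K}$ to a constant on $K$, and identify that constant with $\tfrac{1}{|K|}\mathcal{S}_{K}(\ysf_{\T},\qsf;1)$ via the first-order equilibration \eqref{eq:first_order} tested with $\lambda=1$. The only step left implicit in your chain of equalities is that $\sum_{\gamma\in\mathcal{F}_{K}}\left(\nu\nabla\ysf_{\T|K}\cdot\boldsymbol{n}_{\gamma}^{K},1\right)_{L^{2}(\gamma)}=0$ (divergence theorem plus $\ysf_{\T|K}\in\mathbb{P}_{1}(K)$), which is needed to pass between $\sum_{\gamma}(g_{\gamma,K},1)_{L^{2}(\gamma)}$ and $\sum_{\gamma}(\mathscr{R}_{\gamma,K},1)_{L^{2}(\gamma)}$ and which the paper states explicitly at the outset of its proof.
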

\begin{proof}
Upon noticing that 
$$
\sum_{\gamma\in\mathcal{F}_{K}}\left(\nabla\ysf_{\T|K}\cdot\boldsymbol{n}_{\gamma}^{K},1\right)_{L^{2}(\gamma)}
=\left(\mathbf{div}~(\nabla\ysf_{\T}),1\right)_{L^{2}(K)}=0
$$
and $\mathscr{R}_{\gamma,K} = g_{\gamma,K}-\nu\nabla\ysf_{\T|K}\cdot\boldsymbol{n}_{\gamma}^{K}$, it follows that 
\begin{equation}\label{eq:aux_1}
\frac{1}{|K|}\sum_{\gamma\in\mathcal{F}_{K}}\left(\mathscr{R}_{\gamma,K},1\right)_{L^{2}(\gamma)}=\frac{1}{|K|}\sum_{\gamma\in\mathcal{F}_{K}}\left(g_{\gamma,K},1\right)_{L^{2}(\gamma)}.
\end{equation}
Moreover, \eqref{projectionL2K}, the equilibration condition \eqref{eq:first_order} and the definition of $\mathscr{R}_{K}$ yield that
\[
\begin{split}
\sum_{\gamma\in\mathcal{F}_{K}}\left(g_{\gamma,K},1\right)_{L^{2}(\gamma)}
=&\mathcal{B}_{K}\left(\ysf_{\T},1\right)-(\RR{\qsf},1)_{L^{2}(K)}+\mathcal{S}_{K}\left(\ysf_{\T},\RR{\qsf};1\right)
\\
=&-(\mathscr{R}_{K},1)_{L^{2}(K)}+\mathcal{S}_{K}\left(\ysf_{\T},\RR{\qsf};1\right).
\end{split}
\]
Combining this with \RR{\eqref{eq:aux_1} implies} that
\begin{equation}
\label{eq:aux}
\frac{1}{|K|}\sum_{\gamma\in\mathcal{F}_{K}}\left(\mathscr{R}_{\gamma,K},1\right)_{L^{2}(\gamma)}=-\overline{\mathscr{R}}_{K}+\frac{1}{|K|}\mathcal{S}_{K}\left(\ysf_{\T},\RR{\qsf};1\right).
\end{equation}
Consequently, we rewrite the norm of $\mathscr{R}_{K}+\mathbf{div}~\boldsymbol{\sigma}_{K}$, as
\begin{equation}\label{RKdiv}
\begin{split}
\|\mathscr{R}_{K}+\mathbf{div}~\boldsymbol{\sigma}_{K}\|_{L^{2}(K)}
=&\left\|\overline{\mathscr{R}}_{K}+\frac{1}{|K|}\sum_{\gamma\in\mathcal{F}_{K}}\left(\mathscr{R}_{\gamma,K},1\right)_{L^{2}(\gamma)}\right\|_{L^{2}(K)}
\\
=&\left\|\frac{1}{|K|}\mathcal{S}_{K}\left(\ysf_{\T},\RR{\qsf};1\right)\right\|_{L^{2}(K)}
=\frac{1}{|K|^{1/2}}\left|\mathcal{S}_{K}\left(\ysf_{\T},\RR{\qsf};1\right)\right|,
\end{split}
\end{equation}
which combined with \eqref{indicator}, \eqref{eq:free_upper_bound}, and \eqref{final_reliability}, yields the result claimed. 
\end{proof}

Applying the above theorem to each of the stabilized methods that we are considering allows us to arrive at the below corollary.

\begin{corollary}\label{CO:upper_bound_state}
Let $\ysf \in H^1_0(\Omega)$ and $\ysf_{\T} \in \V(\T)$ be the solutions to problems \eqref{weak_advection-reaction-diffusion} and \eqref{stabilized_fem}, respectively. Then, we have the following fully computable upper bound for the energy norm of the error:
\begin{equation*}
\label{eq:final_upper_bound_1}
 \norm{\ysf-\ysf_{\T}}_{\Omega} \leq 
\left(\sum_{K\in\T}
\left(
\tfrac{\|\boldsymbol{\sigma}_{K}\|_{\boldsymbol{L}^{2}(K)}}{\sqrt{\nu}}
+
\mathsf{C}_{\mathrm{osc},K}\|\mathrm{osc}_{K}\|_{L^{2}(K)} 
\right)^{2}\right)^{1/2}
\end{equation*}
for SUPG, ES and CIP, and
\begin{equation*}
\label{eq:final_upper_bound_2}
\norm{\ysf-\ysf_{\T}}_{\Omega} \leq 
\left(\sum_{K\in\T}
\left(
\tau_{K}\sqrt{\kappa}
\left\|\overline{\mathscr{R}}_{K}\right\|_{L^{2}(K)}
+
\tfrac{\|\boldsymbol{\sigma}_{K}\|_{\boldsymbol{L}^{2}(K)}}{\sqrt{\nu}}
+
\mathsf{C}_{\mathrm{osc},K}\|\mathrm{osc}_{K}\|_{L^{2}(K)}  
\right)^{2}\right)^{1/2}
\end{equation*}
for GLS.
\end{corollary}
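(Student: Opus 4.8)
The plan is to derive Corollary \ref{CO:upper_bound_state} directly from Theorem \ref{TH:upper_bound_state_0} by evaluating the term $\mathcal{S}_{K}(\ysf_{\T},\qsf;1)$, i.e., the stabilization local contribution with test function identically equal to $1$ on $K$, for each of the four stabilized schemes (SUPG, GLS, CIP, ES) listed in Section \ref{stabilizations}. The key observation is that $\nabla 1 = \mathbf{0}$ and that the jump $\llbracket \nabla 1 \cdot \boldsymbol{n}_\gamma\rrbracket = 0$ (and similarly $\nabla 1\cdot\boldsymbol{n}_\gamma^K = 0$) on every interior face, so any stabilization term whose final argument appears only through a gradient or a normal-derivative jump of the test function vanishes when that argument is the constant $1$.

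Concretely, I would proceed term by term. For SUPG \eqref{eq:SUPG}, the factor $\bsf\cdot\nabla\vsf_{\T}$ becomes $\bsf\cdot\nabla 1 = 0$, hence $\mathcal{S}_{K}(\ysf_{\T},\qsf;1)=0$. For ES \eqref{eq:ES}, the test function enters only through $\nabla\vsf_{\T|K}\cdot\boldsymbol{n}_\gamma^K$, which is zero for $\vsf_{\T}=1$, so again $\mathcal{S}_{K}(\ysf_{\T},\qsf;1)=0$. For CIP \eqref{eq:CIP}, the final argument enters through $\bsf\cdot\nabla\vsf_{\T|K}$, again zero, giving $\mathcal{S}_{K}(\ysf_{\T},\qsf;1)=0$. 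In all three cases the first term of \eqref{final_reliability} drops out entirely, and substituting into \eqref{eq:final_upper_bound_0} yields the first displayed bound of the corollary. For GLS \eqref{eq:GLS}, the factor multiplying the residual is $\bsf\cdot\nabla\vsf_{\T}+\kappa\vsf_{\T}$, which for $\vsf_{\T}=1$ reduces to $\kappa$, not $0$; hence
\[
\mathcal{S}_{K}(\ysf_{\T},\qsf;1)=\tau_{K}\,\kappa\,\bigl(\bsf\cdot\nabla\ysf_{\T}+\kappa\ysf_{\T}-\qsf,\,1\bigr)_{L^{2}(K)}=\tau_{K}\kappa|K|\,\overline{\bigl(\bsf\cdot\nabla\ysf_{\T}+\kappa\ysf_{\T}-\qsf\bigr)}_{K}.
\]
Recognizing that $\Pi_K$ preserves means and that $\mathscr{R}_{K}=\Pi_{K}(\qsf)-\Pi_{K}(\bsf\cdot\nabla\ysf_{\T})-\kappa\ysf_{\T|K}$ has mean $\overline{\mathscr{R}}_{K}=\overline{\qsf}_K-\overline{(\bsf\cdot\nabla\ysf_{\T})}_K-\kappa\overline{\ysf_{\T}}_K$, this gives $\mathcal{S}_{K}(\ysf_{\T},\qsf;1)=-\tau_{K}\kappa|K|\,\overline{\mathscr{R}}_{K}$, so the first term of \eqref{final_reliability} becomes $\tfrac{1}{\sqrt{\kappa|K|}}\,\tau_{K}\kappa|K|\,\|\overline{\mathscr{R}}_{K}\|_{L^{2}(K)}^{\text{(pointwise)}}=\tau_{K}\sqrt{\kappa}\,|K|^{1/2}|\overline{\mathscr{R}}_K| = \tau_K\sqrt{\kappa}\|\overline{\mathscr{R}}_K\|_{L^2(K)}$, matching the GLS bound.

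The only mildly delicate point — and the step I would be most careful about — is the bookkeeping in the GLS case: one must be consistent about whether $\overline{\mathscr{R}}_{K}$ denotes the constant value of the mean or the mean regarded as a constant function on $K$, and about the interplay between $\Pi_K$ and the mean $\bar{(\cdot)}_K$; since $\Pi_K$ is the $L^2$-orthogonal projection onto $\mathbb{P}_1(K)$ it reproduces the mean, so $\overline{\Pi_K(w)}_K=\bar w_K$ and no information is lost when passing from $\qsf,\bsf\cdot\nabla\ysf_{\T}$ to their projections inside the mean. Once that is settled, everything else is a direct substitution into \eqref{eq:final_upper_bound_0}–\eqref{final_reliability}, with no new estimates required. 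I would present the argument as a short case analysis, one sentence per stabilization, followed by the GLS computation displayed above.
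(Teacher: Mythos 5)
Your proposal is correct and follows essentially the same route as the paper's proof: invoke Theorem \ref{TH:upper_bound_state_0} and observe that $\mathcal{S}_{K}(\ysf_{\T},\qsf;1)$ vanishes for SUPG, ES and CIP (the test function enters only through its gradient), while for GLS it equals $-\tau_{K}\kappa|K|\overline{\mathscr{R}}_{K}$, yielding the extra term $\tau_{K}\sqrt{\kappa}\|\overline{\mathscr{R}}_{K}\|_{L^{2}(K)}$. Your careful bookkeeping of the mean-preserving property of $\Pi_{K}$ in the GLS case is exactly the point the paper glosses over, and your computation there is accurate.
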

\begin{proof}
The results follow from Theorem \ref{TH:upper_bound_state_0} and an inspection of the stabilization terms. The latter reveals that $\mathcal{S}_{K}\left(\ysf_{\T},\RR{\qsf};1\right)= 0 $ for SUPG, CIP and ES, and that $\left|\mathcal{S}_{K}\left(\ysf_{\T},\RR{\qsf};1\right)\right| = \tau_{K}\left|(-\overline{\mathscr{R}}_{K}, \kappa )_{L^2(K)}\right| = \tau_{K}\kappa\sqrt{|K|}\left\|\overline{\mathscr{R}}_{K}\right\|_{L^{2}(K)}$ for GLS.
\end{proof}
\begin{remark}[extension of the theory]
We are not assuming that $\mathcal{S}_{K}\left(\ysf_{\T},\RR{\qsf};1\right)=0$ as in Assumption 4.3 from \cite{allendes2015error}. This advantage allows us to consider, for example, the GLS scheme. From \eqref{sigmadiv} and \eqref{eq:aux}, it follows that $-\mathbf{div}~\boldsymbol{\sigma}_{K}=\mathscr{R}_{K}$ if $\mathcal{S}_{K}\left(\ysf_{\T},\RR{\qsf};1\right)=0$. If $\mathcal{S}_{K}\left(\ysf_{\T},\RR{\qsf};1\right)\ne 0$ then $-\mathbf{div}~\boldsymbol{\sigma}_{K}\ne\mathscr{R}_{K}$ and the error indicator $\eta_K$, in which the estimator $\eta$ is defined in terms of, has an additional term.
\end{remark}
%%%%%%%%%%%%%%%%%%%%%%%%%%%%%%%%%%%%%%%%%%%%%%%%%
%%%%%%%%%%%%%%%%%%%%%%%%%%%%%%%%%%%%%%%%%%%%%%%%%
%%%%%%%%%%%%%%%%%%%%%%%%%%%%%%%%%%%%%%%%%%%%%%%%%
%%%%%%%%%%%%%%%%%%%%%%%%%%%%%%%%%%%%%%%%%%%%%%%%%
%%%%%%%%%%%%%%%%%%%%%%%%%%%%%%%%%%%%%%%%%%%%%%%%%
%%%%%%%%%%%%%%%%%%%%%%%%%%%%%%%%%%%%%%%%%%%%%%%%%
\subsection{Local efficiency}\label{efficiency}
We now explore the local efficiency properties of the local indicator \eqref{final_reliability}. From \eqref{cota_sigmainitial}\RR{,} we have that
\begin{equation}\label{cota_sigma}
\|\boldsymbol{\sigma}_{K}\|_{\boldsymbol{L}^{2}(K)}
\leq C
\left(h_{K}^{1/2}\sum_{\gamma\in\mathcal{F}_{K}}
\|\mathscr{R}_{\gamma,K}\|_{L^{2}(\gamma)}
+h_{K}\left\|\mathscr{R}_{K}\right\|_{L^{2}(K)}
\right)
\end{equation}
since $p_{\gamma,K}=\mathscr{R}_{\gamma,K}$ and
\[
\begin{split}
h_{K}\left\|p_{K}\right\|_{L^{2}(K)}
&\leq \, h_{K}\left\|\mathscr{R}_{K}-\overline{\mathscr{R}}_{K}\right\|_{L^{2}(K)}+h_{K}{|K|^{1/2}}\sum_{\gamma\in\mathcal{F}_{K}}\tfrac{|\gamma|^{1/2}}{|K|}\left\|\mathscr{R}_{\gamma,K}\right\|_{L^{2}(\gamma)}
\\
&\leq \, h_{K}\left\|\mathscr{R}_{K}\right\|_{L^{2}(K)}+Ch_{K}^{1/2}\sum_{\gamma\in\mathcal{F}_{K}}\left\|\mathscr{R}_{\gamma,K}\right\|_{L^{2}(\gamma)}
\end{split}
\]
because $\left\|\mathscr{R}_{K}-\overline{\mathscr{R}}_{K}\right\|_{L^{2}(K)}\le\left\|\mathscr{R}_{K}\right\|_{L^{2}(K)}$.
Now, we define \begin{eqnarray*}
\llbracket J_{\gamma}\rrbracket:=
\left\{
\begin{array}{cl}
\tfrac{1}{2}(J_{\gamma,K}+J_{\gamma,K'}) & \textrm{if}~\gamma\in \mathcal{F}_{K}\cap\mathcal{F}_{K'},~K\neq K',\\
0 & \textrm{if}~\gamma\in \mathcal{F}_{K}\cap\mathcal{F}_{\partial\Omega},
\end{array}
\right.
\end{eqnarray*}
where $J_{\gamma,K}:=\nu\nabla\ysf_{\T|K}\cdot\boldsymbol{n}_{\gamma}^{K}$. This, combined with \eqref{rgk} and \eqref{jumps2} allows us to state that $\mathscr{R}_{\gamma,K}=g_{\gamma,K}-\langle J\rangle_{\gamma,K}-\llbracket J_{\gamma}\rrbracket$. Then, in view of \eqref{final_reliability}, \eqref{cota_sigma} provides the bound:
\begin{eqnarray}
\eta_{K}&\leq C
\left(
\tfrac{h_{K}}{\sqrt{\nu}}\|\mathscr{R}_{K}\|_{L^{2}(K)}
+
\sum_{\gamma\in\mathcal{F}_{K}}
\left(\tfrac{h_{K}}{\nu}\right)^{1/2}
\left(
\|\llbracket J_{\gamma}\rrbracket\|_{L^{2}(\gamma)}+
\|g_{\gamma,K}-\langle J \rangle_{\gamma,K}\|_{L^{2}(\gamma)}
\right)\right)\nonumber
\\
& \qquad\quad +\mathsf{C}_{\textrm{osc},K}\|\textrm{osc}_{K}\|_{L^{2}(K)}
+\tfrac{1}{\sqrt{\kappa |K|}}|\mathcal{S}_{K}(\ysf_{\T},\RR{\qsf};1)|.\label{eq:bound_for_etaK}
\end{eqnarray}
To prove local efficiency, the terms on the right hand side of \eqref{eq:bound_for_etaK} have to be bounded by the energy norm of the error $\ysf - \ysf_{\T}$ plus data oscillation terms. To do this, we first note that we can rewrite the error equation \eqref{error_equation_2}, for any $\vsf \in H_{0}^{1}(\Omega)$, as
$$
\sum_{K\in\T}
% \left(
(\mathscr{R}_{K},\vsf)_{L^{2}(K)}
- 2
\sum_{\gamma\in\mathcal{F}_{I}}
(\llbracket J_{\gamma}\rrbracket,\vsf)_{L^{2}(\gamma)}
% \right)
=
\mathcal{B}(\ysf-\ysf_{\T},\vsf)-\sum_{K\in\T}(\textrm{osc}_{K},\vsf)_{L^{2}(K)}.
$$
Applying standard bubble function arguments \cite{AObook,verfurth1996review} to this error equation yields that
\begin{equation}\label{eq:RK}
\tfrac{h_{K}}{\sqrt{\nu}}\|\mathscr{R}_{K}\|_{L^{2}(K)}
\leq C
\left(
\mathsf{C}_{K}\norm{\ysf-\ysf_{\T}}_{K}+\tfrac{h_{K}}{\sqrt{\nu}}\|\textrm{osc}_{K}\|_{L^{2}(K)}
\right)
\end{equation}
for $K \in\T$, and
\begin{equation}\label{eq:gammaK}
\left(\tfrac{h_{K}}{\nu}\right)^{1/2}\|\llbracket J_{\gamma}\rrbracket\|_{L^{2}(\gamma)}
\leq C
\sum_{K'\in\Omega_{\gamma}}
\left(
\mathsf{C}_{K'}\norm{\ysf-\ysf_{\T}}_{K'}+\tfrac{h_{K'}}{\sqrt{\nu}}\|\textrm{osc}_{K'}\|_{L^{2}(K')}
\right)
\end{equation}
for $K \in\T$ and $\gamma\in\mathcal{F}_{K}$, where
\begin{equation}
\label{eq:C_K}
\mathsf{C}_{K}:=
\max\left\{
1,\tfrac{\|\bsf\|_{\boldsymbol{L}^{\infty}(K)}h_{K}}{\nu},\sqrt{\kappa} \tfrac{h_{K}}{\sqrt{\nu}}
\right\}\RR{.}
\end{equation}

\RR{Now}, from \eqref{eq:bound_for_etaK}, \eqref{eq:RK}, \eqref{eq:gammaK} and \eqref{gmJ} we have that
\begin{align*}
\eta_{K} 
& \leq 	C
\sum_{n\in\mathcal{V}_{K}}
\sum_{K'\in\Omega_{n}}
\Bigg(
\mathsf{C}_{K'}\norm{\ysf-\ysf_{\T}}_{K'}+\tfrac{h_{K'}}{\sqrt{\nu}}\|\textrm{osc}_{K'}\|_{L^{2}(K')} \\
&
+\left(\tfrac{h_{K'}^{2-d}}{\nu}\right)^{1/2}|\mathcal{S}_{K'}(\ysf_{\T},\RR{\qsf};\lambda_{n|K'})|
\Bigg)
+\tfrac{1}{\sqrt{\kappa |K|}}|\mathcal{S}_{K}(\ysf_{\T},\RR{\qsf};1)|.
\end{align*}
Consequently, to obtain the local efficiency of $\eta_{K}$ it remains to control the stabilization term $\mathcal{S}_K$ in the previous inequality. We proceed to examine the local contribution of each method described in Section \ref{stabilizations}, namely, SUPG, GLS, ES, and CIP.
~\\~\\
\underline{\textit{Streamline Upwind Petrov--Galerkin (SUPG):}}  Clearly $\mathcal{{S}}_{K}(\ysf_{\T},\RR{\qsf};1)=0$. Moreover,
\begin{align*}
\left(\tfrac{h_{K}^{2-d}}{\nu}\right)^{1/2}\left|
\mathcal{{S}}_{K}(\ysf_{\T},\RR{\qsf};\lambda_{n|K})\right|
& = \tfrac{h_{K}^{1-d/2}}{\sqrt{\nu}}\tau_{K}
|\left(
\bsf\cdot\nabla\ysf_{\T}+\kappa\ysf_{\T}-\RR{\qsf},
\bsf\cdot\nabla \lambda_{n}
\right)_{L^{2}(K)}|\\
& \leq C
\tau_{K}\tfrac{\|\bsf\|_{\boldsymbol{L}^{\infty}(K)}}{h_{K}}
\tfrac{h_{K}}{\sqrt{\nu}}\left(\|\mathscr{R}_{K}\|_{L^{2}(K)}+\|\textrm{osc}_{K}\|_{L^{2}(K)}\right).
\end{align*}
~\\
\underline{\textit{Galerkin Least Square (GLS):}} In view of \eqref{eq:GLS}, we have that
\[
\tfrac{1}{\sqrt{\kappa |K|}}|\mathcal{S}_{K}(\ysf_{\T},\RR{\qsf},1)| 
 =
\tfrac{1}{\sqrt{\kappa |K|}}\tau_{K}\kappa\sqrt{|K|}\left\|\overline{\mathscr{R}}_{K}\right\|_{L^{2}(K)}
=
\tau_{K}\tfrac{\sqrt{\kappa\nu}}{h_{K}}
\tfrac{h_{K}}{\sqrt{\nu}}\|\mathscr{R}_{K}\|_{L^{2}(K)}
\]
and
\begin{gather*}
 \left(\tfrac{h_{K}^{2-d}}{\nu}\right)^{1/2}\left|
\mathcal{{S}}_{K}(\ysf_{\T},\RR{\qsf};\lambda_{n|K})\right|
 = \tfrac{h_{K}^{1-d/2}}{\sqrt{\nu}}\tau_{K}
|\left(
\bsf\cdot\nabla\ysf_{\T}+\kappa\ysf_{\T}-\RR{\qsf},
\bsf\cdot\nabla \lambda_{n} + \kappa\lambda_{n}
\right)_{L^{2}(K)}|\\
 \leq C
\tau_{K}\max\left\{\tfrac{\|\bsf\|_{\boldsymbol{L}^{\infty}(K)}}{h_{K}},\kappa\right\}
\tfrac{h_{K}}{\sqrt{\nu}}\left(\|\mathscr{R}_{K}\|_{L^{2}(K)}+\|\textrm{osc}_{K}\|_{L^{2}(K)}\right).
\end{gather*}
~\\
\underline{\textit{Edge Stabilization (ES):}} Clearly $\mathcal{{S}}_{K}(\ysf_{\T},\RR{\qsf};1)=0$. Moreover, \eqref{eq:ES} yields
\begin{equation*}
\left(\tfrac{h_{K}^{2-d}}{\nu}\right)^{1/2}\left|\mathcal{{S}}_{K}(\ysf_{\T},\RR{\qsf};\lambda_{n|K})\right|
 \leq C
\tfrac{h_{K}}{\nu}\sum_{\gamma\in\mathcal{F}_{K}\cap\mathcal{F}_{I}}\tau_{\gamma}
\left(\tfrac{h_{K}}{\nu}\right)^{1/2}
\|\llbracket J_{\gamma}\rrbracket\|_{L^{2}(\gamma)}.
\end{equation*}
~\\
\underline{\textit{Continuous Interior Penalty (CIP):}} Clearly $\mathcal{{S}}_{K}(\ysf_{\T},\RR{\qsf};1)=0$. Now, notice that $\bsf=(\boldsymbol{n}_{\gamma}^K\cdot\bsf)\boldsymbol{n}_{\gamma}^K
+(\boldsymbol{t}_{\gamma}\cdot\bsf)\boldsymbol{t}_{\gamma}$
where $\boldsymbol{t}_{\gamma}$ is a unit tangent to $\gamma$. Hence, using the fact that $\bsf\in\boldsymbol{W}_{1}^{\infty}(\Omega)$, allows us to see that $\llbracket \bsf\cdot\nabla\ysf_{\T}\rrbracket_{\gamma,K}=\tfrac{2}{\nu}(\boldsymbol{n}_{\gamma}^K\cdot\bsf)\llbracket J_{\gamma}\rrbracket$ since
\[
\llbracket \bsf\cdot\nabla\ysf_{\T}\rrbracket_{\gamma,K}=
(\boldsymbol{n}_{\gamma}^K\cdot\bsf) \boldsymbol{n}_{\gamma}^K\cdot\nabla(\ysf_{\T|K}-\ysf_{\T|K^\gamma})+(\boldsymbol{t}_{\gamma}\cdot\bsf) \boldsymbol{t}_{\gamma}\cdot\nabla(\ysf_{\T|K}-\ysf_{\T|K^\gamma}).
\]
Thus
$
\|\llbracket \bsf\cdot\nabla\ysf_{\T}\rrbracket_{\gamma,K}\|_{L^{2}(\gamma)}\leq \tfrac{2}{\nu}\|\bsf\|_{\boldsymbol{L}^{\infty}(\gamma)}\|\llbracket J_{\gamma}\rrbracket\|_{L^{2}(\gamma)}.
$
Consequently, \eqref{eq:CIP} yields
\begin{equation*}
\left(\tfrac{h_{K}^{2-d}}{\nu}\right)^{1/2}\left|\mathcal{{S}}_{K}(\ysf_{\T},\RR{\qsf};\lambda_{n|K})\right|
\\
\leq C
\sum_{\gamma\in\mathcal{F}_{K}\cap\mathcal{F}_{I}}
\tfrac{\tau_{\gamma}\|\bsf\|_{\boldsymbol{L}^{\infty}(\gamma)}^2}{\nu h_{K}}\left(\tfrac{h_{K}}{\nu}\right)^{1/2}
\|\llbracket J_{\gamma}\rrbracket\|_{L^{2}(\gamma)}.
\end{equation*}
By combining all of the previous results, we can finally summarize and bound the error indicator $\eta_K$ as described in the following theorem. 

\begin{theorem}
Let $\ysf \in H^1_0(\Omega)$ and $\ysf_{\T} \in \V(\T)$ be the solutions to problems \eqref{weak_advection-reaction-diffusion} and \eqref{stabilized_fem}, respectively. Then, we have the following local lower bound for the energy norm of the error:
\begin{align*}
\eta_{K} 
& \leq 	C
\sum_{n\in\mathcal{V}_{K}}
\sum_{K'\in\Omega_{n}}
\Bigg(
\mathsf{C}_{K'}\norm{\ysf-\ysf_{\T}}_{K'}+
\tfrac{h_{K'}}{\sqrt{\nu}}\|\mathrm{osc}_{K'}\|_{L^{2}(K')}\\
& \qquad\qquad\quad +\mathsf{C}_{\mathcal{S}_{K'}}
\Bigg(
\sum_{K''\in\Omega_{K'}}
\left(
\mathsf{C}_{K''}\norm{\ysf-\ysf_{\T}}_{K''}+
\tfrac{h_{K''}}{\sqrt{\nu}}\|\mathrm{osc}_{K''}\|_{L^{2}(K'')}
\right)
\Bigg)
\Bigg),
\end{align*}
where \EO{$\mathsf{C}_{K}$ is defined as in \eqref{eq:C_K} and}
\[
\mathsf{C}_{\mathcal{S}_{K}}=
\left\{
\begin{array}{l}
\displaystyle{\tau_{K}\frac{\|\bsf\|_{\boldsymbol{L}^{\infty}(K)}}{h_{K}}}\quad  \textrm{for SUPG}, \qquad
\displaystyle{\tau_{K}\max\left\{\frac{\sqrt{\kappa\nu}}{h_{K}},\frac{\|\bsf\|_{\boldsymbol{L}^{\infty}(K)}}{h_{K}},\kappa\right\}}\quad  \textrm{for GLS},\\
\displaystyle{\frac{h_{K}}{\nu}
\max_{\gamma\in\mathcal{F}_{K}\cap\mathcal{F}_{I}}\tau_{\gamma}}\quad  \textrm{for ES}, \qquad
\displaystyle{\max_{\gamma\in\mathcal{F}_{K}\cap\mathcal{F}_{I}}\frac{\tau_{\gamma}\|\bsf\|_{\boldsymbol{L}^{\infty}(\gamma)}^2}{\nu h_{K}}}\quad  \textrm{for CIP}.
\end{array}
\right.
\]
\end{theorem}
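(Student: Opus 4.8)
The plan is to assemble the final bound by chaining together the intermediate estimates that have already been established in this subsection and then inserting the method-specific bounds on the stabilization contributions that were worked out case by case just above the statement. First I would recall the starting point, which is the displayed inequality immediately preceding the theorem, namely
\[
\eta_{K} \leq C \sum_{n\in\mathcal{V}_{K}} \sum_{K'\in\Omega_{n}} \left( \mathsf{C}_{K'}\norm{\ysf-\ysf_{\T}}_{K'} + \tfrac{h_{K'}}{\sqrt{\nu}}\|\mathrm{osc}_{K'}\|_{L^{2}(K')} + \left(\tfrac{h_{K'}^{2-d}}{\nu}\right)^{1/2}|\mathcal{S}_{K'}(\ysf_{\T},\qsf;\lambda_{n|K'})| \right) + \tfrac{1}{\sqrt{\kappa|K|}}|\mathcal{S}_{K}(\ysf_{\T},\qsf;1)|,
\]
which itself was obtained by combining \eqref{eq:bound_for_etaK}, \eqref{eq:RK}, \eqref{eq:gammaK} and \eqref{gmJ}. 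The task remaining is purely to bound the two stabilization terms on the right by the energy norm of the error plus oscillation, uniformly over the four schemes, and to bookkeep the resulting sums of element neighbourhoods.

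Next I would dispatch the term $\tfrac{1}{\sqrt{\kappa|K|}}|\mathcal{S}_{K}(\ysf_{\T},\qsf;1)|$: for SUPG, ES and CIP it vanishes (as noted, $\mathcal{S}_{K}(\ysf_{\T},\qsf;1)=0$), and for GLS the computation already recorded gives $\tfrac{1}{\sqrt{\kappa|K|}}|\mathcal{S}_{K}(\ysf_{\T},\qsf;1)| = \tau_{K}\tfrac{\sqrt{\kappa\nu}}{h_{K}}\cdot\tfrac{h_{K}}{\sqrt{\nu}}\|\mathscr{R}_{K}\|_{L^{2}(K)}$, which is $\le \mathsf{C}_{\mathcal{S}_{K}}\cdot\tfrac{h_{K}}{\sqrt{\nu}}\|\mathscr{R}_{K}\|_{L^{2}(K)}$ with the stated GLS value of $\mathsf{C}_{\mathcal{S}_{K}}$. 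Then I would handle $\left(\tfrac{h_{K'}^{2-d}}{\nu}\right)^{1/2}|\mathcal{S}_{K'}(\ysf_{\T},\qsf;\lambda_{n|K'})|$ scheme by scheme, quoting the four displayed estimates above the theorem; in each case the right-hand side is $\mathsf{C}_{\mathcal{S}_{K'}}$ times either $\tfrac{h_{K'}}{\sqrt{\nu}}(\|\mathscr{R}_{K'}\|_{L^{2}(K')}+\|\mathrm{osc}_{K'}\|_{L^{2}(K')})$ (SUPG, GLS) or $\sum_{\gamma\in\mathcal{F}_{K'}\cap\mathcal{F}_{I}}(\tfrac{h_{K'}}{\nu})^{1/2}\|\llbracket J_{\gamma}\rrbracket\|_{L^{2}(\gamma)}$ (ES, CIP), up to adjusting the constant and the power of $h$ by shape regularity so that all four read with the same $\mathsf{C}_{\mathcal{S}_{K'}}$ as in the statement. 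Finally I would re-apply \eqref{eq:RK} to each $\tfrac{h_{K'}}{\sqrt{\nu}}\|\mathscr{R}_{K'}\|_{L^{2}(K')}$ and \eqref{eq:gammaK} to each $(\tfrac{h_{K'}}{\nu})^{1/2}\|\llbracket J_{\gamma}\rrbracket\|_{L^{2}(\gamma)}$ to convert these residual quantities into $\norm{\ysf-\ysf_{\T}}_{K''}$ and $\|\mathrm{osc}_{K''}\|_{L^{2}(K'')}$ over the one-further-out patch $K''\in\Omega_{K'}$, absorb constants into $C$, and collapse the nested neighbourhood sums $\sum_{n\in\mathcal{V}_{K}}\sum_{K'\in\Omega_n}\sum_{K''\in\Omega_{K'}}$ into the form displayed in the theorem, using that each such sum has a bounded number of terms by shape regularity.

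The main obstacle is essentially bookkeeping rather than any genuine difficulty: one has to be careful that the inner application of \eqref{eq:RK}/\eqref{eq:gammaK} to the stabilization remainder is applied on the correct patch (the term $\mathsf{C}_{\mathcal{S}_{K'}}$ multiplies a residual on $K'$, whose efficiency bound reaches out to $\Omega_{K'}$, hence the triple patch $K''\in\Omega_{K'}$, $K'\in\Omega_n$, $n\in\mathcal{V}_K$ appearing in the statement), and that the powers of $h_{K'}$ and $\nu$ coming from the $(\tfrac{h_{K'}^{2-d}}{\nu})^{1/2}$ prefactor combine correctly with those already present in the ES/CIP estimates — here one uses $h_{\gamma}\simeq h_{K'}\simeq h_{K^{\gamma}}$ and $|\gamma|\simeq h_{K'}^{d-1}$ from shape regularity to rewrite everything in terms of $h_{K'}$ alone. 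I would also note in passing that, as in \eqref{eq:bound_for_etaK}, the indicator $\eta_{K}$ is bounded — as the theorem's phrasing (\emph{local lower bound for the energy norm of the error}) indicates — so this yields exactly the local efficiency of $\eta_{K}$ modulo the $\nu$-, $\bsf$-, $\kappa$-dependent constants $\mathsf{C}_{K}$ and $\mathsf{C}_{\mathcal{S}_{K}}$, consistent with the non-robustness remarked upon in the introduction.
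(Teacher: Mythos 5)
Your proposal is correct and follows essentially the same route as the paper, which itself offers no separate proof beyond the remark that the theorem follows ``by combining all of the previous results'': you correctly identify the starting inequality, the scheme-by-scheme bounds on $\mathcal{S}_{K'}(\ysf_{\T},\qsf;\lambda_{n|K'})$ and $\mathcal{S}_{K}(\ysf_{\T},\qsf;1)$, the second application of \eqref{eq:RK} and \eqref{eq:gammaK} on the outer patch $\Omega_{K'}$ (which is precisely why the nested sum over $K''\in\Omega_{K'}$ appears), and the shape-regularity bookkeeping needed to collapse the constants.
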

\section{Optimal control problem}\label{control_optimo}
In this section\RR{,} we analyze the optimal control problem \eqref{min}--\eqref{control_constraint}. To approximate its solution, we propose a numerical method that is based on the stabilized schemes of Section \ref{stabilizations}. We derive fully computable a posteriori upper bounds for the error and prove local efficiency properties of the proposed error estimators. 

Under the assumptions \textbf{(A1)}--\textbf{(A3)}, the existence and uniqueness of an optimal pair $(\bar \ysf, \bar \usf) \in H^1_0(\Omega) \times \mathbb{U}_{ad}$ satisfying \eqref{min}--\eqref{control_constraint} follows standard arguments \cite{MR2583281}. An equivalent formulation can be obtained by introducing the so--called adjoint variable $\bar{\psf}$. We then say that $(\bar{\ysf},\bar{\psf},\bar{\usf}) $ is optimal if and only if it solves the nonlinear system 
\begin{equation}\label{optimal_system_1}
\left\{
\begin{array}{rcl}
\bar{\ysf}\in H_{0}^{1}(\Omega): & 
\mathcal{B}(\bar{\ysf},\vsf) = (\fsf+\bar{\usf},\vsf)_{L^{2}(\Omega)}, &
\forall~\vsf\in H_{0}^{1}(\Omega),\\
\bar{\psf}\in H_{0}^{1}(\Omega): & 
\mathcal{B}^{*}(\bar{\psf},\wsf) = (\bar{\ysf}-\ysf_{\Omega},\wsf)_{L^{2}(\Omega)}, &
\forall~\wsf\in H_{0}^{1}(\Omega),\\
\bar{\usf}\in {\mathbb{U}_{ad}}: & 
\left(\bar{\psf}+\vartheta\bar{\usf},\usf-\bar{\usf}\right)_{L^{2}(\Omega)}\geq 0
, &  
\forall~\usf\in\mathbb{U}_{ad},
\end{array}
\right.
\end{equation}
which  is necessary and sufficient for optimality. The form $\mathcal{B}$ is given in \eqref{forma_B}, and
\begin{equation}
\label{eq:B_dual}
\mathcal{B}^{*}(\wsf,\vsf):=\nu(\nabla\wsf,\nabla\vsf)_{L^{2}(\Omega)}+
(\kappa\wsf-\boldsymbol{b}\cdot\nabla\wsf,\vsf)_{L^{2}(\Omega)}.
\end{equation}
Finally, we recall the projection formula for the optimal control variable: the variational inequality in \eqref{optimal_system_1} can be equivalently written as (see \cite[Chapter 2]{MR2583281}),\RR{
\begin{equation}\label{eq:u}
\bar{\usf}=\Pi\left(-\frac{1}{\vartheta}\bar{\psf}\right),
\end{equation}}
where\RR{
\begin{equation}\label{projection_formula}
\left(\Pi\left(w\right)\right)(\boldsymbol{x}):=\min\left\{\bbsf,\max\left\{\asf,w(\boldsymbol{x})\right\}\right\}\mbox{ for almost every }\boldsymbol{x}\mbox{ in }\Omega
\end{equation}}
with $\asf$ and $\bbsf$ being the control bounds that define the set $\mathbb{U}_{ad}$ in \eqref{control_constraint}. \RR{We note that this operator is such that, for $K\in\T$,
\begin{equation}\label{Lipschitz}
\left\| \Pi (w_1)-\Pi (w_2)  \right\|_{L^2(K)}\le \left\| w_1-w_2 \right\|_{L^2(K)}
\mbox{ for all }w_1,w_2\in H_{0}^{1}(\Omega).
\end{equation}
}

We now introduce a numerical technique to solve problem \eqref{min}--\eqref{control_constraint} that is based on the discretization of the optimality system \eqref{optimal_system_1}, i.e., we consider the so--called \emph{optimize--then--discretize} approach. The scheme incorporates stabilized terms into the standard Galerkin discretizations of the state and adjoint equations; no a priori relation between the stabilized terms is required. The stabilized scheme reads as follows: Find 
$(\bar{\ysf}_{\T},\bar{\psf}_{\T},\bar{\usf}_{\T})\in \mathbb{V}(\T)\times \mathbb{V}(\T)\times \mathbb{U}_{ad}(\T)$ such that
\begin{equation}\label{optimal_system_discrete_1}
\begin{array}{cl}
\mathcal{B}(\bar{\ysf}_{\T},\vsf_{\T}) + 
\mathcal{S}(\bar{\ysf}_{\T},\fsf+\bar{\usf}_{\T};\vsf_{\T})
= (\fsf+\bar{\usf}_{\T},\vsf_{\T})_{L^{2}(\Omega)}, 
& \forall\vsf_{\T}\in \mathbb{V}(\T),\\
\hspace{-0.3cm}
\mathcal{B}^{*}(\bar{\psf}_{\T},\wsf_{\T}) + \mathcal{S}^{*}(\bar{\psf}_{\T},\bar{\ysf}_{\T}-\ysf_{\Omega};\wsf_{\T})
= (\bar{\ysf}_{\T}-\ysf_{\Omega},\wsf_{\T})_{L^{2}(\Omega)}, 
&  \forall\wsf_{\T}\in \mathbb{V}(\T),\\
\left(\bar{\psf}_{\T}+\vartheta\bar{\usf}_{\T},\usf_{\T}-\bar{\usf}_{\T}\right)_{L^{2}(\Omega)}\geq 0,
& \forall\usf_{\T}\in\mathbb{U}_{ad}(\T),
\end{array}
\end{equation}
where $\mathbb{U}_{ad}(\T) = \{ \usf_{\T} \in L^{\infty}(\Omega):~\usf_{\T|K}\in\mathbb{P}_{0}(K)~\forall K\in\T\}\cap\mathbb{U}_{ad}$, and $\mathcal{S}$ and $\mathcal{S}^*$ are stabilization terms. The stabilization for the state equation $\mathcal{S}$ is defined in Section \ref{stabilizations} for different stabilization methods. The stabilization for the adjoint equation reads:
\begin{eqnarray*}
\mathcal{{S}}^{*}(\bar{\psf}_{\T},\bar{\ysf}_{\T}-\ysf_{\Omega};\wsf_{\T}) =
\sum_{K\in\T}\mathcal{{S}}^{*}_{K}(\bar{\psf}_{\T},\bar{\ysf}_{\T}-\ysf_{\Omega};\wsf_{\T|K}),
\end{eqnarray*}
where the local terms $\mathcal{{S}}^{*}_{K} = \mathcal{{S}}^{*}_{K}(\bar{\psf}_{\T},\bar{\ysf}_{\T}-\ysf_{\Omega};\wsf_{\T|K})$, for the below mentioned stabilizations, are defined as follows:
\begin{equation}\label{eq:SUPGad}
\underline{\textit{SUPG}}:\quad
\mathcal{{S}}^{*}_{K}:=\tau^{*}_{K}
\left(-\bsf\cdot\nabla\bar{\psf}_{\T}+\kappa\bar{\psf}_{\T}-\bar{\ysf}_{\T}+\ysf_{\Omega}, -\bsf\cdot\nabla \wsf_{\T}
\right)_{L^{2}(K)}.
\end{equation}
\begin{equation}\label{eq:GLSad}
\underline{\textit{GLS}}:\quad
\mathcal{{S}}^{*}_{K}:=\tau^{*}_{K}
\left( -\bsf\cdot\nabla\bar{\psf}_{\T}+\kappa\bar{\psf}_{\T}-\bar{\ysf}_{\T}+\ysf_{\Omega},
-\bsf\cdot\nabla \wsf_{\T} + \kappa\wsf_{\T}\right)_{L^{2}(K)}.
\end{equation}
\begin{equation}\label{eq:CIPad}
\underline{\textit{CIP}}:\quad
\mathcal{{S}}^{*}_{K}:=\sum_{\gamma\in\mathcal{F}_{K}\cap\mathcal{F}_{I}}\tau^{*}_{\gamma}
\left(\llbracket \bsf\cdot\nabla\bar{\psf}_{\T}\rrbracket_{\gamma,K},  \bsf\cdot\nabla\wsf_{\T|K}  \right)_{L^{2}(\gamma)}.
\end{equation}
\begin{equation}\label{eq:ESad}
\underline{\textit{ES}}:\quad
\mathcal{{S}}^{*}_{K}
:=\sum_{\gamma\in\mathcal{F}_{K}\cap\mathcal{F}_{I}}\tau^{*}_{\gamma}
\left(\llbracket \nabla \bar{\psf}_{\T} \cdot \boldsymbol{n}_{\gamma} \rrbracket , \nabla \wsf_{\T|K} \cdot\boldsymbol{n}_{\gamma}^{K} (h_{K}^{2}+h_{K^\gamma}^{2})\right)_{L^{2}(\gamma)}.
\end{equation}
In all the above mentioned schemes $\tau^{*}_{K}$ and $\tau^{*}_{\gamma}$ denote nonnegative stabilization parameters that can vary from one method to another. 

\begin{remark}[optimize--then--discretize]
In general, there are two approaches to solve \eqref{min}--\eqref{control_constraint}: optimize--then--discretize and discretize--then--optimize. The first approach
is based on the discretization of the optimality system \eqref{optimal_system_1}. In contrast, the second approach first discretizes the optimal control problem \eqref{min}--\eqref{control_constraint} and then deduces the discrete optimality conditions. In principle, these two approaches do not coincide: they could lead to different discrete problems. If $\mathcal{S}$ and $\mathcal{S}^{*}$ are based on the same scheme and $\mathcal{S}$ is symmetric, then both approaches lead to the same discrete system.
\end{remark}

Before proceeding with the description of our solution technique for \eqref{min}--\eqref{control_constraint}, let us comment on those advocated in the literature. Concerning the a priori theory, to the best of our knowledge, the first work that analyzed a stabilized scheme is \cite{collis2002analysis}. This work considers the SUPG method, elaborates on the fact that the approaches optimize--then--discretize and discretize--then--optimize do not coincide and explores their respective advantages; see \cite{MR2460012} for an improvement on the theory. Later, local projection stabilization (LPS) techniques were proposed in \cite{MR2302057,MR2486088}. These techniques have the advantage that, due to the symmetry of the proposed stabilization term, optimize--then--discretize and discretize--then--optimize coincide. The ES scheme has also been employed to derive a discrete technique that approximates the solution to \eqref{min}--\eqref{control_constraint} \cite{MR2495058,MR2463111}: 
optimize--then--discretize and discretize--then--optimize coincide. We refer the reader to \cite{MR3246619} for a survey that includes other discretization techniques and an extensive list of references.

In contrast to this well--established theory, the a posteriori error analysis for stabilized finite element discretizations of \eqref{min}--\eqref{control_constraint} is not as well developed. We refer the reader to \cite{MR2178571,MR2495058,MR3212590,Nederkoorn,MR2460012,MR2463111} for a posteriori error estimators based on different stabilized schemes: ES scheme, discontinuous Galerkin methods and the Lagrange functional method. A common feature of all of the above--cited references is that the upper bound for the error in terms of the estimator, when it is derived, involves constants that are not known. This motivates the construction of fully computable a posteriori error estimators for the discretization \eqref{optimal_system_discrete_1} of the system \eqref{optimal_system_1}. In contrast to \cite{MR2178571,MR2495058,Nederkoorn,MR2460012,MR2463111}, we also study the efficiency properties of the proposed error indicators.

\subsection{A posteriori error analysis: reliability}
We assume that the discrete and non–linear problem \eqref{optimal_system_discrete_1} has a unique solution. We then construct a posteriori error estimators that are based on three contributions. First, we define the global a posteriori error estimator associated with the optimal control variable
\begin{equation}
 \label{eq:indicator_control}
 \eta_{ct}:=\left( \sum_{K \in \T} \eta_{ct,K}^{2}\right)^{1/2}\mbox{ where }\eta_{ct,K}:= \| \bar \usf_{\T} - \Pi(-\tfrac{1}{\vartheta} \bar \psf_{\T}) \|_{L^2(K)}.
\end{equation}
Here, $\Pi$ denotes the non--linear operator defined in \eqref{projection_formula}.

We now construct the error estimators associated with the state and adjoint optimal variables. To accomplish this task, we define $\hat{\ysf} \in H_0^1(\Omega)$ to be such that
\begin{equation}
\label{eq:y_hat}
\mathcal{B}(\hat{\ysf},\vsf)=(\fsf+\bar{\usf}_{\T},\vsf) \quad  \forall~ \vsf \in H_{0}^{1}(\Omega),
\end{equation}
and $\hat{\psf} \in H_0^1(\Omega)$ to be such that
\begin{equation}
\label{eq:p_hat}
\mathcal{B}^{*} (\hat{\psf},\wsf) = (\bar{\ysf}_{\T}-\ysf_{\Omega},\wsf) \quad  \forall~ \wsf \in H_{0}^{1}(\Omega).
\end{equation}
Performing the a posteriori error analysis presented in Section \ref{Fully_computable}, to bound the error between the solutions of \eqref{eq:y_hat} and the discretization of the state equation in \eqref{optimal_system_discrete_1}, and the error between the solutions of \eqref{eq:p_hat} and the discretization of the adjoint equation from \eqref{optimal_system_discrete_1}, we can conclude that
\begin{equation}\label{final_upper_bound_state_adjoint}
\norm{\hat{\ysf}-\bar{\ysf}_{\T}}_{\Omega}^{2} \leq \eta_{st}^{2}
\quad\textrm{and}\quad
\norm{\hat{\psf}-\bar{\psf}_{\T}}_{\Omega}^{2} \leq \eta_{ad}^{2},
\end{equation}
where, for $\varrho=st$ or $\varrho=ad$, the error estimators $\eta_{st}$ and $\eta_{ad}$ are defined by
\begin{equation}\label{global_estimator_state_adjoint}
\eta_{\varrho}^{2}: = \sum_{K\in\T}\eta_{\varrho,K}^{2}, \quad
\eta_{\varrho,K}:= \frac{1}{\sqrt{\kappa|K|}} \left|\mathcal{S}^{\varrho}_{K}(1)\right|
+
\frac{\|\boldsymbol{\sigma}_{K}^{\varrho}\|_{\boldsymbol{L}^{2}(K)}}{\sqrt{\nu}}
+
\mathsf{C}_{\textrm{osc},K}\|\textrm{osc}_{K}^{\varrho}\|_{L^{2}(K)}. 
\end{equation}
Here, for all $K\in\T$, $\boldsymbol{\sigma}_{K}^{\varrho}\in\mathbb{P}_2(K)\times\mathbb{P}_2(K)$ denotes the solution to 
\begin{equation}\label{sigmadiv_state}
\left\{
\begin{array}{rcl}
-\mathbf{div}~\boldsymbol{\sigma}_{K}^{\varrho} & = & \mathscr{R}_{K}^{\varrho}- \frac{1}{|K|} \left(\mathscr{R}_{K}^{\varrho},1\right)_{L^{2}(K)}- \frac{1}{|K|} \sum_{\gamma\in\mathcal{F}_{K}} (\mathscr{R}_{\gamma,K}^{\varrho},1)_{L^{2}(\gamma)} \quad
\mbox{in }K,\\
\boldsymbol{\sigma}_{K}^{\varrho}\cdot\boldsymbol{n}_{\gamma}^{K} & = & \mathscr{R}_{\gamma,K}^{\varrho} \quad
\mbox{on }\gamma\mbox{ for all }\gamma\in\mathcal{F}_{K},
\end{array}
\right.
\end{equation}
which is such that $\|\boldsymbol{\sigma}_{K}^{\varrho}\|_{\boldsymbol{L}^{2}(K)}$ is minimized, with residuals and oscillation terms defined as
\begin{equation}\label{residuals_osc_state}
\left\{
\begin{array}{rcl}
\mathscr{R}_{K}^{st} & := & \Pi_{K}(\fsf)+\bar{\usf}_{\T|K}-\Pi_{K}(\bsf\cdot\nabla\bar{\ysf}_{\T})-\kappa\bar{\ysf}_{\T|K},
\\ 
\mathscr{R}_{\gamma,K}^{st} & := & g_{\gamma,K}^{st}-\nu \nabla\bar{\ysf}_{\T}\cdot\boldsymbol{n}_{\gamma}^{K},
\\
\textrm{osc}_{K}^{st} & := & \fsf-\Pi_{K}(\fsf)-(\bsf\cdot\nabla\bar{\ysf}_{\T|K}-\Pi_{K}(\bsf\cdot\nabla\bar{\ysf}_{\T})),
\end{array}
\right.
\end{equation}
and
\begin{equation}
\label{residuals_osc_adjoint}
\left\{
\begin{array}{rcl}
\mathscr{R}_{K}^{ad} & := & \bar{\ysf}_{\T|K}-\Pi_{K}(\ysf_{\Omega})+\Pi_{K}(\bsf\cdot\nabla\bar{\psf}_{\T})-\kappa\bar{\psf}_{\T|K},
\\
\mathscr{R}_{\gamma,K}^{ad} & := & g_{\gamma,K}^{ad}-\nu \nabla\bar{\psf}_{\T}\cdot\boldsymbol{n}_{\gamma}^{K},
\\
\textrm{osc}_{K}^{ad} & := & -(\ysf_{\Omega}-\Pi_{K}(\ysf_{\Omega}))+\bsf\cdot\nabla\bar{\psf}_{\T|K}-\Pi_{K}(\bsf\cdot\nabla\bar{\psf}_{\T}).
\end{array}
\right.
\end{equation}
The \RR{equilibrated} boundary fluxes $\{g_{\gamma,K}^{\varrho}\}$ are constructed on the basis of the material presented in Section \ref{boundary_fluxes}. First, they must satisfy the consistency property
\begin{equation}\label{consistency}
g_{\gamma,K}^{\varrho}+g_{\gamma,K'}^{\varrho}=0,\quad
\textrm{if}~\gamma\in \mathcal{F}_{K}\cap\mathcal{F}_{K'},~
K,K'\in\T,~K\neq K'.
\end{equation}
In addition, they must satisfy the first order equilibration condition that is
\begin{equation*}
0=(\fsf+\bar{\usf}_{\T},\lambda)_{L^{2}(K)}-\mathcal{B}_{K}\left(\bar{\ysf}_{\T},\lambda\right)+
\sum_{\gamma\in\mathcal{F}_{K}} ( g_{\gamma,K}^{st},\lambda )_{L^{2}(\gamma)}
-\mathcal{S}_{K}^{st}(\lambda),
\end{equation*}
and
\begin{equation*}
0=(\bar{\ysf}_{\T}-\ysf_{\Omega},\lambda)_{L^{2}(K)}-\mathcal{B}_{K}^{*}\left(\bar{\psf}_{\T},\lambda\right)+
\sum_{\gamma\in\mathcal{F}_{K}}( g_{\gamma,K}^{ad},\lambda )_{L^{2}(\gamma)}
-\mathcal{S}_{K}^{ad}(\lambda),
\end{equation*}
for all $\lambda\in\mathbb{P}_{1}(K)$ and all $K\in\T$, and where $\mathcal{B}_{K}\left(\bar{\ysf}_{\T},\lambda\right)=\nu(\nabla\bar{\ysf}_{\T},\nabla \lambda)_{L^{2}(K)}+
(\boldsymbol{b}\cdot\nabla\bar{\ysf}_{\T}+\kappa\bar{\ysf}_{\T},\lambda)_{L^{2}(K)}$, $\mathcal{B}_{K}^{*}\left(\bar{\psf}_{\T},\lambda\right)=\nu(\nabla\bar{\psf}_{\T},\nabla \lambda)_{L^{2}(K)}+
(\kappa\bar{\psf}_{\T}-\boldsymbol{b}\cdot\nabla\bar{\psf}_{\T},\lambda)_{L^{2}(K)}$ and 
\begin{equation}
\mathcal{S}^{\varrho}_K (\lambda):=
\left\{
\begin{array}{cl}
\mathcal{S}_{K}\left(\bar{\ysf}_{\T},\fsf+\bar{\usf}_{\T};\lambda\right) & \textrm{for}~\varrho=st,\\
\mathcal{S}^*_{K}\left(\bar{\psf}_{\T},\bar{\ysf}_{\T} - \ysf_{\Omega};\lambda\right) & \textrm{for}~\varrho=ad.
\end{array}
\right.
\end{equation}
\RR{Finally, they must satisfy the corresponding analog of \eqref{gmJ}.}

\RR{For $G=\Omega$ or $G\in\T$, we define
\[
  \|(e_{\bar{\ysf}},e_{\bar{\psf}},e_{\bar{\usf}})\|_{G} = \left(\norm{\bar{\ysf} - \bar{\ysf}_{\T}}_{G}^2 
+ \norm{\bar{\psf} - \bar{\psf}_{\T}}_{G}^2
+ \| \bar{\usf} - \bar{\usf}_{\T}\|_{L^2(G)}^2\right)^{1/2}.
\]}
We now present the analysis through which we obtain a fully computable upper bound for the total error for our optimal control problem.
\begin{theorem}[global reliability]\label{th:global_reliability}
Let $(\bar{\ysf},\bar{\psf},\bar{\usf}) \in H_0^1(\Omega) \times H_0^1(\Omega) \times L^2(\Omega)$ be the solution to \eqref{optimal_system_1} and $(\bar{\ysf}_{\T},\bar{\psf}_{\T},\bar{\usf}_{\T}) \in \V(\T) \times \V(\T) \times \U_{\textrm{ad}}(\T)$ its numerical approximation obtained as the solution to \eqref{optimal_system_discrete_1}, then 
\begin{equation}
\label{eq:reliability}
\RR{\|(e_{\bar{\ysf}},e_{\bar{\psf}},e_{\bar{\usf}})\|_{\Omega}}
\leq 
\Upsilon=\left(\sum_{K\in\T}\Upsilon_K^2\right)^{1/2}
\end{equation}
where 
\begin{equation}
\label{eq:indicatorOC}
\Upsilon_K^2:=\mathrm{C}_{st} 
\eta_{st,K}^{2} +
\mathrm{C}_{ad} 
\eta_{ad,K}^{2} +
\mathrm{C}_{ct} 
\eta_{ct,K}^{2},
\end{equation}
with $\eta_{st}^{2}$ and $\eta_{ad}^{2}$ being given in \eqref{global_estimator_state_adjoint}, $\eta_{ct}^{2}$ being defined in \eqref{eq:indicator_control} and
\begin{gather*}
\mathrm{C}_{st} :=
2+\frac{4}{\kappa^2}+\frac{8}{\vartheta^2\kappa^6}\left(\kappa^3+2\kappa^2+4\right)
,\quad
\mathrm{C}_{ad} :=
2+\frac{4}{\vartheta^2\kappa^4}\left(\kappa^3+2\kappa^2+4\right)
,\\
\mathrm{C}_{ct} :=
2+\frac{4}{\kappa}+\frac{8}{\kappa^3}+\frac{8}{\vartheta^2\kappa^7}\left(\kappa^3+2\kappa^2+4\right).
\end{gather*}
\end{theorem}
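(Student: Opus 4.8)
The plan is to reduce the bound for the total error to three ingredients that are already available: the fully computable estimates $\norm{\hat{\ysf}-\bar{\ysf}_{\T}}_{\Omega}\le\eta_{st}$ and $\norm{\hat{\psf}-\bar{\psf}_{\T}}_{\Omega}\le\eta_{ad}$ from \eqref{final_upper_bound_state_adjoint}, the identity $\|\bar{\usf}_{\T}-\Pi(-\tfrac1\vartheta\bar{\psf}_{\T})\|_{L^{2}(K)}=\eta_{ct,K}$ coming from \eqref{eq:indicator_control}, and the stability of the state and adjoint problems. The key structural facts are that, under \textbf{(A1)}--\textbf{(A2)}, both bilinear forms are coercive in the energy norm, namely $\mathcal{B}(\vsf,\vsf)=\norm{\vsf}_{\Omega}^{2}$ and, since $(\bsf\cdot\nabla\vsf,\vsf)_{L^{2}(\Omega)}=0$ for $\vsf\in H_{0}^{1}(\Omega)$, also $\mathcal{B}^{*}(\vsf,\vsf)=\norm{\vsf}_{\Omega}^{2}$; and that $\sqrt{\kappa}\,\|\vsf\|_{L^{2}(G)}\le\norm{\vsf}_{G}$ for every $\vsf\in H^{1}(G)$, directly from \eqref{eq:norm}.

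First I would estimate the control error. Using the projection formula $\bar{\usf}=\Pi(-\tfrac1\vartheta\bar{\psf})$ from \eqref{eq:u}, inserting $\Pi(-\tfrac1\vartheta\bar{\psf}_{\T})$ by the triangle inequality, invoking the Lipschitz property \eqref{Lipschitz} of $\Pi$, and then $\sqrt{\kappa}\,\|\cdot\|_{L^{2}(\Omega)}\le\norm{\cdot}_{\Omega}$, one obtains
\[
\|\bar{\usf}-\bar{\usf}_{\T}\|_{L^{2}(\Omega)}\le\frac1\vartheta\|\bar{\psf}-\bar{\psf}_{\T}\|_{L^{2}(\Omega)}+\eta_{ct}\le\frac1{\vartheta\sqrt{\kappa}}\norm{\bar{\psf}-\bar{\psf}_{\T}}_{\Omega}+\eta_{ct}.
\]
Next, subtracting \eqref{eq:y_hat} from the first equation of \eqref{optimal_system_1} yields $\mathcal{B}(\bar{\ysf}-\hat{\ysf},\vsf)=(\bar{\usf}-\bar{\usf}_{\T},\vsf)_{L^{2}(\Omega)}$; choosing $\vsf=\bar{\ysf}-\hat{\ysf}$, using coercivity and $\sqrt{\kappa}\,\|\cdot\|_{L^{2}(\Omega)}\le\norm{\cdot}_{\Omega}$ gives $\norm{\bar{\ysf}-\hat{\ysf}}_{\Omega}\le\tfrac1{\sqrt{\kappa}}\|\bar{\usf}-\bar{\usf}_{\T}\|_{L^{2}(\Omega)}$, and a triangle inequality with \eqref{final_upper_bound_state_adjoint} produces $\norm{\bar{\ysf}-\bar{\ysf}_{\T}}_{\Omega}\le\tfrac1{\sqrt{\kappa}}\|\bar{\usf}-\bar{\usf}_{\T}\|_{L^{2}(\Omega)}+\eta_{st}$. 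Arguing the same way with \eqref{eq:p_hat} and the second equation of \eqref{optimal_system_1}, using $\mathcal{B}^{*}(\bar{\psf}-\hat{\psf},\wsf)=(\bar{\ysf}-\bar{\ysf}_{\T},\wsf)_{L^{2}(\Omega)}$, the coercivity of $\mathcal{B}^{*}$ and the bound $\|\bar{\ysf}-\bar{\ysf}_{\T}\|_{L^{2}(\Omega)}\le\tfrac1{\sqrt{\kappa}}\norm{\bar{\ysf}-\bar{\ysf}_{\T}}_{\Omega}$ applied twice, gives $\norm{\bar{\psf}-\bar{\psf}_{\T}}_{\Omega}\le\tfrac1\kappa\norm{\bar{\ysf}-\bar{\ysf}_{\T}}_{\Omega}+\eta_{ad}$.

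These three scalar inequalities form a cyclically coupled system for $\norm{\bar{\ysf}-\bar{\ysf}_{\T}}_{\Omega}$, $\norm{\bar{\psf}-\bar{\psf}_{\T}}_{\Omega}$ and $\|\bar{\usf}-\bar{\usf}_{\T}\|_{L^{2}(\Omega)}$, in which the errors reappear on the right only with coefficients that are products of inverse powers of $\vartheta$ and $\kappa$. The final step is to resolve it: square each inequality with $(a+b)^{2}\le 2a^{2}+2b^{2}$, substitute successively so that each squared error is written in terms of $\eta_{st}^{2}$, $\eta_{ad}^{2}$, $\eta_{ct}^{2}$ plus a multiple of itself, absorb that self-referential multiple, add the three resulting bounds, and collect the constants; this produces $\norm{\bar{\ysf}-\bar{\ysf}_{\T}}_{\Omega}^{2}+\norm{\bar{\psf}-\bar{\psf}_{\T}}_{\Omega}^{2}+\|\bar{\usf}-\bar{\usf}_{\T}\|_{L^{2}(\Omega)}^{2}\le\mathrm{C}_{st}\eta_{st}^{2}+\mathrm{C}_{ad}\eta_{ad}^{2}+\mathrm{C}_{ct}\eta_{ct}^{2}$, and since the right-hand side equals $\sum_{K\in\T}\Upsilon_{K}^{2}$ the assertion \eqref{eq:reliability} follows. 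The main obstacle is this concluding bookkeeping: one must pick the order of the substitutions and the Young's-inequality weights so that the absorption closes cleanly and the constants collapse to exactly $\mathrm{C}_{st}$, $\mathrm{C}_{ad}$, $\mathrm{C}_{ct}$ rather than to some more cumbersome expression.
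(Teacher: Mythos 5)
Your Steps for the state and adjoint errors are exactly the paper's Steps 2 and 3: $\norm{\bar{\ysf}-\bar{\ysf}_{\T}}_{\Omega}\le\tfrac{1}{\sqrt{\kappa}}\|\bar{\usf}-\bar{\usf}_{\T}\|_{L^{2}(\Omega)}+\eta_{st}$ and $\norm{\bar{\psf}-\bar{\psf}_{\T}}_{\Omega}\le\tfrac{1}{\kappa}\norm{\bar{\ysf}-\bar{\ysf}_{\T}}_{\Omega}+\eta_{ad}$ are both correct. The gap is in your control estimate. Bounding $\|\bar{\usf}-\bar{\usf}_{\T}\|_{L^{2}(\Omega)}\le\tfrac{1}{\vartheta\sqrt{\kappa}}\norm{\bar{\psf}-\bar{\psf}_{\T}}_{\Omega}+\eta_{ct}$ via the Lipschitz property of $\Pi$ closes the three inequalities into a genuine multiplicative cycle whose gain is $\tfrac{1}{\vartheta\sqrt{\kappa}}\cdot\tfrac{1}{\sqrt{\kappa}}\cdot\tfrac{1}{\kappa}=\tfrac{1}{\vartheta\kappa^{2}}$. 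The absorption you invoke in the final step only works when $\vartheta\kappa^{2}>1$, which is not assumed anywhere (in the paper's first numerical example $\vartheta=\kappa=1$, so the gain is exactly $1$ and the self-referential term cannot be absorbed at all). Even when $\vartheta\kappa^{2}>1$, the resulting constants carry a factor $(1-\tfrac{1}{\vartheta\kappa^{2}})^{-1}$ and do not reduce to the stated $\mathrm{C}_{st}$, $\mathrm{C}_{ad}$, $\mathrm{C}_{ct}$. So the "concluding bookkeeping" you flag as the main obstacle is not a bookkeeping issue: the system as you have set it up is not resolvable in general.

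The paper breaks the cycle differently in its Step 1. It sets $\tilde{\usf}=\Pi(-\tfrac{1}{\vartheta}\bar{\psf}_{\T})$ and, instead of the Lipschitz bound, uses the two variational inequalities (for $\bar{\usf}$ and for $\tilde{\usf}$) to get $\vartheta\|\bar{\usf}-\tilde{\usf}\|_{L^{2}(\Omega)}^{2}\le(\bar{\psf}-\bar{\psf}_{\T},\tilde{\usf}-\bar{\usf})_{L^{2}(\Omega)}$, then introduces the auxiliary exact solutions $\tilde{\ysf},\tilde{\psf}$ driven by $\tilde{\usf}$ and exploits the sign identity $(\bar{\psf}-\tilde{\psf},\tilde{\usf}-\bar{\usf})_{L^{2}(\Omega)}=-\|\bar{\ysf}-\tilde{\ysf}\|_{L^{2}(\Omega)}^{2}\le 0$. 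The dangerous contribution that would couple the control error back to the true adjoint error is discarded because it is nonpositive, and the remaining terms $\|\tilde{\psf}-\hat{\psf}\|$, $\|\hat{\psf}-\bar{\psf}_{\T}\|$, $\|\tilde{\ysf}-\hat{\ysf}\|$ are all controlled directly by $\eta_{ad}$, $\eta_{st}$ and $\eta_{ct}$ (the last via $\|\tilde{\usf}-\bar{\usf}_{\T}\|_{L^{2}(\Omega)}=\eta_{ct}$), with no true error reappearing on the right. This yields an unconditional, acyclic bound \eqref{error_u} for the control error, after which your Steps 2 and 3 go through verbatim. To repair your proof you would need to replace your first inequality with this variational-inequality-plus-monotonicity argument.
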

\begin{proof}
We proceed in four steps.
~\\
\noindent \framebox{Step 1.} The goal of this step is to control the error $\bar{\usf}-\bar{\usf}_{\T}$. We define $\tilde{\usf} = \Pi (-\tfrac{1}{\vartheta}\bar{\psf}_{\T})$, which can be equivalently characterized by
 \begin{equation}
  \label{tildeu}
  (\bar{\psf}_{\T}+\vartheta \tilde{\usf}, \usf - \tilde{\usf})_{L^2(\Omega)} \geq 0 \quad \forall \usf \in \mathbb{U}_{\textrm{ad}}.
 \end{equation}
With this definition at hand, we have that
\begin{equation}
 \label{u-uh-utilde}
 \|  \bar{\usf} -\bar{\usf}_{\T} \|_{L^2(\Omega)}^{2} \leq 2 \left(\|\bar{\usf}-\tilde{\usf}\|_{L^2(\Omega)}^{2}+\|\tilde{\usf}-\bar{\usf}_{\T}\|_{L^2(\Omega)}^{2}\right)=2\|  \bar{\usf} -\tilde{\usf} \|_{L^2(\Omega)}^{2} + 2\eta_{ct}^{2}.
\end{equation}

Let us now focus on the first term on the right hand side of \eqref{u-uh-utilde}. The variational inequality of \eqref{optimal_system_1} with $\usf = \tilde \usf$ and \eqref{tildeu} with $\usf = \bar \usf$ yield that
$$
(\bar{\psf}+\vartheta\bar{\usf},\tilde{\usf}-\bar{\usf})_{L^{2}(\Omega)} \geq 0
\mbox{ and }
(\bar{\psf}_{\T}+\vartheta \tilde{\usf}, \bar{\usf} - \tilde{\usf})_{L^2(\Omega)} \geq 0,
$$
from which it follows that
$$
\vartheta\| \bar{\usf} - \tilde{\usf} \|^2_{L^2(\Omega)} \leq (\bar{\psf} - \bar{\psf}_{\T}, \tilde{\usf} - \bar{\usf})_{L^2(\Omega)}.
$$
To bound the right hand side of the above expression, we let $(\tilde{\ysf},\tilde{\psf})$ be such that
\begin{eqnarray*}
\left\{
\begin{array}{rrcll}
\tilde{\ysf}\in H_{0}^{1}(\Omega): & 
\mathcal{B}(\tilde{\ysf},\vsf) &=& (\fsf+\tilde{\usf},\vsf)  &  \forall~ \vsf \in H_{0}^{1}(\Omega),\\
\tilde{\psf}\in H_{0}^{1}(\Omega): & 
\mathcal{B}^{*}(\tilde{\psf},\wsf) &=& (\tilde{\ysf}-\ysf_{\Omega},\wsf) &  \forall~ \wsf \in H_{0}^{1}(\Omega).
\end{array}
\right.
\end{eqnarray*}
We then have that 
\begin{multline*}
\vartheta\| \bar{\usf} - \tilde{\usf} \|^2_{L^2(\Omega)}
\leq 
(\bar{\psf}  - \tilde{\psf}, \tilde{\usf} - \bar{\usf})_{L^2(\Omega)} + 
(\tilde{\psf}  - \hat{\psf}, \tilde{\usf} - \bar{\usf})_{L^2(\Omega)} +
(\hat{\psf} - \bar{\psf}_{\T}, \tilde{\usf} - \bar{\usf})_{L^2(\Omega)} \\
\leq 
(\bar{\psf}  - \tilde{\psf}, \tilde{\usf} - \bar{\usf})_{L^2(\Omega)} + 
\tfrac{1}{\vartheta}\|\tilde{\psf}  - \hat{\psf}\|_{L^{2}(\Omega)}^{2} +
\tfrac{1}{\vartheta}\|\hat{\psf} - \bar{\psf}_{\T}\|_{L^{2}(\Omega)}^{2} +
\tfrac{\vartheta}{2}
\|\bar{\usf} - \tilde{\usf}\|_{L^2(\Omega)}^{2}
\end{multline*}
upon using Cauchy--Schwarz and Young's inequalities. Hence,
\begin{equation}\label{eq:usf-tildeusf_2}
\| \bar{\usf} - \tilde{\usf} \|^2_{L^2(\Omega)} \leq 
\tfrac{2}{\vartheta}(\bar{\psf}  - \tilde{\psf}, \tilde{\usf} - \bar{\usf})_{L^2(\Omega)} + 
\tfrac{2}{\vartheta^2}\|\tilde{\psf}  - \hat{\psf}\|_{L^{2}(\Omega)}^{2} +
\tfrac{2}{\vartheta^2}\|\hat{\psf} - \bar{\psf}_{\T}\|_{L^{2}(\Omega)}^{2}.
\end{equation}
We now proceed to bound the term $(\bar{\psf}  - \tilde{\psf}, \tilde{\usf} - \bar{\usf})_{L^2(\Omega)}$. To do this, we note that $\tilde{\ysf}-\bar{\ysf}$ is such that $\mathcal{B}(\tilde{\ysf}-\bar{\ysf},\vsf)=(\tilde{\usf}-\bar{\usf},\vsf)_{L^{2}(\Omega)}$ for all $\vsf \in H_{0}^{1}(\Omega)$ and that $\bar{\psf}-\tilde{\psf}$ solves $\mathcal{B}^{*}(\bar{\psf}-\tilde{\psf},\wsf)=(\bar{\ysf}-\tilde{\ysf},\wsf)$ for all $\wsf \in H_{0}^{1}(\Omega)$. Hence,
\[
 (\bar{\psf}  - \tilde{\psf}, \tilde{\usf} - \bar{\usf})_{L^2(\Omega)} = \mathcal{B}(\tilde{\ysf}-\bar{\ysf},\bar{\psf}  - \tilde{\psf}) = \mathcal{B}^{*}(\bar{\psf}-\tilde{\psf},\tilde{\ysf}-\bar{\ysf})_{L^{2}(\Omega)} = -\|\bar{\ysf}-\tilde{\ysf}\|_{L^{2}(\Omega)}^{2}\leq 0.
\]
This, in conjunction with \eqref{eq:usf-tildeusf_2} yields
\begin{equation}\label{eq:usf-tildeusf_3}
\| \bar{\usf} - \tilde{\usf} \|^2_{L^2(\Omega)} 
\leq 
\tfrac{2}{\vartheta^2}\| \tilde{\psf}  - \hat{\psf}\|_{L^{2}(\Omega)}^{2} +
\tfrac{2}{\vartheta^2}\| \hat{\psf} - \bar{\psf}_{\T}\|_{L^{2}(\Omega)}^{2}
\leq
\tfrac{2}{\vartheta^2}\| \tilde{\psf}  - \hat{\psf}\|_{L^{2}(\Omega)}^{2} + \tfrac{2}{\vartheta^2\kappa}\eta_{ad}^{2}
\end{equation}
since $\| \hat{\psf} - \bar{\psf}_{\T}\|_{L^{2}(\Omega)}^{2}\le\tfrac{1}{\kappa}\norm{\hat{\psf} - \bar{\psf}_{\T}}^2\le\tfrac{1}{\kappa}\eta_{ad}^{2}$ because of \eqref{final_upper_bound_state_adjoint}. To bound $\| \tilde{\psf}  - \hat{\psf}\|_{L^{2}(\Omega)}^{2}$ we first note that $\mathcal{B}^{*}(\tilde{\psf}  - \hat{\psf},\wsf)=(\tilde{\ysf}-\bar{\ysf}_{\T},\wsf)_{L^{2}(\Omega)}$ for all $\wsf\in H_{0}^{1}(\Omega)$. So, taking $\wsf=\tilde{\psf}  - \hat{\psf}$ and using the fact that $\boldsymbol{b}$ is a solenoidal field allows us to conclude that
$$
\kappa\|\tilde{\psf}  - \hat{\psf}\|_{L^{2}(\Omega)}^{2} 
\leq 
\mathcal{B}^{*}(\tilde{\psf}  - \hat{\psf},\tilde{\psf}  - \hat{\psf})=(\tilde{\ysf}-\bar{\ysf}_{\T},\tilde{\psf}  - \hat{\psf})_{L^{2}(\Omega)}=(\tilde{\ysf}-\hat{\ysf}+\hat{\ysf}-\bar{\ysf}_{\T},\tilde{\psf}  - \hat{\psf})_{L^{2}(\Omega)}
$$
and hence,
$$
\|\tilde{\psf}  - \hat{\psf}\|_{L^{2}(\Omega)}^{2} 
\leq 
\tfrac{1}{\kappa}\left(\|\tilde{\ysf}-\hat{\ysf}\|_{L^{2}(\Omega)}+\|\hat{\ysf}-\bar{\ysf}_{\T}\|_{L^{2}(\Omega)}\right)
\|\tilde{\psf}  - \hat{\psf}\|_{L^{2}(\Omega)}.
$$
Consequently,
$$
\|\tilde{\psf}  - \hat{\psf}\|_{L^{2}(\Omega)}^{2} 
\leq 
\tfrac{2}{\kappa^2}\left(\|\tilde{\ysf}-\hat{\ysf}\|_{L^{2}(\Omega)}^2+\|\hat{\ysf}-\bar{\ysf}_{\T}\|_{L^{2}(\Omega)}^2\right)
\leq
\tfrac{2}{\kappa^2}\|\tilde{\ysf}-\hat{\ysf}\|_{L^{2}(\Omega)}^2+\tfrac{2}{\kappa^3}\eta_{st}^{2},
$$
since $\| \hat{\ysf} - \bar{\ysf}_{\T}\|_{L^{2}(\Omega)}^{2}\le\tfrac{1}{\kappa}\norm{\hat{\ysf} - \bar{\ysf}_{\T}}^2\le\tfrac{1}{\kappa}\eta_{st}^{2}$. This, in conjunction with \eqref{eq:usf-tildeusf_3}, yields
\begin{equation}\label{eq:usf-tildeusf_4}
\| \bar{\usf} - \tilde{\usf} \|^2_{L^2(\Omega)} 
\leq
\tfrac{4}{\vartheta^2\kappa^2}\|\tilde{\ysf}-\hat{\ysf}\|_{L^{2}(\Omega)}^2
+\tfrac{4}{\vartheta^2\kappa^3}\eta_{st}^{2}
+\tfrac{2}{\vartheta^2\kappa}\eta_{ad}^{2}.
\end{equation}
To bound $\| \tilde{\ysf}  - \hat{\ysf}\|_{L^{2}(\Omega)}^{2}$ we first note that $\mathcal{B}(\tilde{\ysf}  - \hat{\ysf},\vsf)=(\tilde{\usf}-\bar{\usf}_{\T},\vsf)_{L^{2}(\Omega)}$ for all $\vsf\in H_{0}^{1}(\Omega)$. So, taking $\vsf=\tilde{\ysf}  - \hat{\ysf}$ 
allows us to conclude that
$$
\kappa\|\tilde{\ysf}  - \hat{\ysf}\|_{L^{2}(\Omega)}^{2} 
\leq 
\mathcal{B}(\tilde{\ysf}  - \hat{\ysf},\tilde{\ysf}  - \hat{\ysf})=(\tilde{\usf}-\bar{\usf}_{\T},\tilde{\ysf}  - \hat{\ysf})_{L^{2}(\Omega)}
\leq 
\|\tilde{\usf}-\bar{\usf}_{\T}\|_{L^{2}(\Omega)}\|\tilde{\ysf}  - \hat{\ysf}\|_{L^{2}(\Omega)}
$$
and hence, 
$
\|\tilde{\ysf}  - \hat{\ysf}\|_{L^{2}(\Omega)}^{2} 
\leq
\tfrac{1}{\kappa^2}\|\tilde{\usf}-\bar{\usf}_{\T}\|_{L^{2}(\Omega)}^2
=
\tfrac{1}{\kappa^2}\eta_{ct}^{2}.
$
Combining this with \eqref{eq:usf-tildeusf_4} implies that
$$
\| \bar{\usf} - \tilde{\usf} \|^2_{L^2(\Omega)} 
\leq
\tfrac{4}{\vartheta^2\kappa^4}\eta_{ct}^{2}
+\tfrac{4}{\vartheta^2\kappa^3}\eta_{st}^{2}
+\tfrac{2}{\vartheta^2\kappa}\eta_{ad}^{2}
$$
which together with \eqref{u-uh-utilde}, allows us to conclude that
\begin{equation}\label{error_u}
\| \bar{\usf} - \bar{\usf}_{\T} \|^2_{L^2(\Omega)} 
\leq
\tfrac{8}{\vartheta^2\kappa^3}\eta_{st}^{2}
+\tfrac{4}{\vartheta^2\kappa}\eta_{ad}^{2}
+\left(2+\tfrac{8}{\vartheta^2\kappa^4}\right)\eta_{ct}^{2}.
\end{equation}
%%%%%%%%%%%%%%%%%%%%%%%%%%%%%%%%%%%%%%%%%%%%%%%%%%%%%%%%%%%%%%%
%%%%%%%%%%%%%%%%%%%%%%%%%%%%%%%%%%%%%%%%%%%%%%%%%%%%%%%%%%%%%%%
%%%%%%%%%%%%%%%%%%%%%%%%%%%%%%%%%%%%%%%%%%%%%%%%%%%%%%%%%%%%%%%
%%%%%%%%%%%%%%%%%%%%%%%%%%%%%%%%%%%%%%%%%%%%%%%%%%%%%%%%%%%%%%%
%%%%%%%%%%%%%%%%%%%%%%%%%%%%%%%%%%%%%%%%%%%%%%%%%%%%%%%%%%%%%%%
%%%%%%%%%%%%%%%%%%%%%%%%%%%%%%%%%%%%%%%%%%%%%%%%%%%%%%%%%%%%%%%
%%%%%%%%%%%%%%%%%%%%%%%%%%%%%%%%%%%%%%%%%%%%%%%%%%%%%%%%%%%%%%%
%%%%%%%%%%%%%%%%%%%%%%%%%%%%%%%%%%%%%%%%%%%%%%%%%%%%%%%%%%%%%%%
%%%%%%%%%%%%%%%%%%%%%%%%%%%%%%%%%%%%%%%%%%%%%%%%%%%%%%%%%%%%%%%
%%%%%%%%%%%%%%%%%%%%%%%%%%%%%%%%%%%%%%%%%%%%%%%%%%%%%%%%%%%%%%%
\noindent \framebox{Step 2.} The goal of this step is to control the error $\bar{\ysf}-\bar{\ysf}_{\T}$. Now, we have that 
\begin{equation}\label{step2-1}
\norm{\bar{\ysf}-\bar{\ysf}_{\T}}_{\Omega}^{2}\leq 2 (\norm{\bar{\ysf}-\hat{\ysf}}_{\Omega}^{2}+\norm{\hat{\ysf}-\bar{\ysf}_{\T}}_{\Omega}^{2})\le 
2\norm{\bar{\ysf}-\hat{\ysf}}_{\Omega}^{2}+2\eta_{st}^{2}.
\end{equation}
Moreover, $\norm{\bar{\ysf}-\hat{\ysf}}_{\Omega}^{2}\leq \tfrac{1}{\kappa}
\|\bar{\usf}-\bar{\usf}_{\T}\|_{L^{2}(\Omega)}^{2}$, since
$$
\norm{\bar{\ysf}-\hat{\ysf}}_{\Omega}^{2}=\mathcal{B}(\bar{\ysf}-\hat{\ysf},\bar{\ysf}-\hat{\ysf})=
(\bar{\usf}-\bar{\usf}_{\T},\bar{\ysf}-\hat{\ysf})_{L^{2}(\Omega)}\leq \tfrac{1}{\sqrt{\kappa}}
\|\bar{\usf}-\bar{\usf}_{\T}\|_{L^{2}(\Omega)}
\norm{\bar{\ysf}-\hat{\ysf}}_{\Omega}.
$$
Therefore, upon combining this with \eqref{step2-1} and \eqref{error_u}, we can conclude that
\begin{equation}\label{error_y}
\norm{\bar{\ysf}-\bar{\ysf}_{\T}}_{\Omega}^{2}
\leq
\left(2+\tfrac{16}{\vartheta^2\kappa^4}\right)\eta_{st}^{2}
+\tfrac{8}{\vartheta^2\kappa^2}\eta_{ad}^{2}
+\left(\tfrac{4}{\kappa}+\tfrac{16}{\vartheta^2\kappa^5}\right)\eta_{ct}^{2}.
\end{equation}
%%%%%%%%%%%%%%%%%%%%%%%%%%%%%%%%%%%%%%%%%%%%%%%%%%%%%%%%%%%%%%%
%%%%%%%%%%%%%%%%%%%%%%%%%%%%%%%%%%%%%%%%%%%%%%%%%%%%%%%%%%%%%%%
%%%%%%%%%%%%%%%%%%%%%%%%%%%%%%%%%%%%%%%%%%%%%%%%%%%%%%%%%%%%%%%
%%%%%%%%%%%%%%%%%%%%%%%%%%%%%%%%%%%%%%%%%%%%%%%%%%%%%%%%%%%%%%%
%%%%%%%%%%%%%%%%%%%%%%%%%%%%%%%%%%%%%%%%%%%%%%%%%%%%%%%%%%%%%%%
%%%%%%%%%%%%%%%%%%%%%%%%%%%%%%%%%%%%%%%%%%%%%%%%%%%%%%%%%%%%%%%
%%%%%%%%%%%%%%%%%%%%%%%%%%%%%%%%%%%%%%%%%%%%%%%%%%%%%%%%%%%%%%%
%%%%%%%%%%%%%%%%%%%%%%%%%%%%%%%%%%%%%%%%%%%%%%%%%%%%%%%%%%%%%%%
%%%%%%%%%%%%%%%%%%%%%%%%%%%%%%%%%%%%%%%%%%%%%%%%%%%%%%%%%%%%%%%
%%%%%%%%%%%%%%%%%%%%%%%%%%%%%%%%%%%%%%%%%%%%%%%%%%%%%%%%%%%%%%%
\noindent \framebox{Step 3.} The goal of this step is to control the error $\bar{\psf}-\bar{\psf}_{\T}$. Now, we have that 
\begin{equation}\label{step3-1}
\norm{\bar{\psf}-\bar{\psf}_{\T}}_{\Omega}^{2}\leq 2 (\norm{\bar{\psf}-\hat{\psf}}_{\Omega}^{2}+\norm{\hat{\psf}-\bar{\psf}_{\T}}_{\Omega}^{2})\le 
2\norm{\bar{\psf}-\hat{\psf}}_{\Omega}^{2}+2\eta_{ad}^{2}.
\end{equation}
Moreover, $\norm{\bar{\psf}-\hat{\psf}}_{\Omega}^{2}\leq \tfrac{1}{\kappa^2}
\norm{\bar{\ysf}-\bar \ysf_{\T}}_{\Omega}^{2}$, since
$$
\norm{\bar{\psf}-\hat{\psf}}_{\Omega}^{2}=\mathcal{B}^{*}(\bar{\psf}-\hat{\psf},\bar{\psf}-\hat{\psf})=
(\bar{\ysf}-\bar{\ysf}_{\T},\bar{\psf}-\hat{\psf})_{L^{2}(\Omega)}\leq \tfrac{1}{\kappa}
\norm{\bar{\ysf}-\bar \ysf_{\T}}_{\Omega}
\norm{\bar{\psf}-\hat{\psf}}_{\Omega}.
$$
Therefore, upon combining this with \eqref{step3-1} and \eqref{error_y}, we can conclude that
\begin{equation}\label{error_p}
\norm{\bar{\psf}-\bar{\psf}_{\T}}_{\Omega}^{2}
\leq
\left(\tfrac{4}{\kappa^2}+\tfrac{32}{\vartheta^2\kappa^6}\right)\eta_{st}^{2}
+\left(2+\tfrac{16}{\vartheta^2\kappa^4}\right)\eta_{ad}^{2}
+\left(\tfrac{8}{\kappa^3}+\tfrac{32}{\vartheta^2\kappa^7}\right)\eta_{ct}^{2}.
\end{equation}
%%%%%%%%%%%%%%%%%%%%%%%%%%%%%%%%%%%%%%%%%%%%%%%%%%%%%%%%%%%%%%%
%%%%%%%%%%%%%%%%%%%%%%%%%%%%%%%%%%%%%%%%%%%%%%%%%%%%%%%%%%%%%%%
%%%%%%%%%%%%%%%%%%%%%%%%%%%%%%%%%%%%%%%%%%%%%%%%%%%%%%%%%%%%%%%
%%%%%%%%%%%%%%%%%%%%%%%%%%%%%%%%%%%%%%%%%%%%%%%%%%%%%%%%%%%%%%%
%%%%%%%%%%%%%%%%%%%%%%%%%%%%%%%%%%%%%%%%%%%%%%%%%%%%%%%%%%%%%%%
%%%%%%%%%%%%%%%%%%%%%%%%%%%%%%%%%%%%%%%%%%%%%%%%%%%%%%%%%%%%%%%
%%%%%%%%%%%%%%%%%%%%%%%%%%%%%%%%%%%%%%%%%%%%%%%%%%%%%%%%%%%%%%%
%%%%%%%%%%%%%%%%%%%%%%%%%%%%%%%%%%%%%%%%%%%%%%%%%%%%%%%%%%%%%%%
%%%%%%%%%%%%%%%%%%%%%%%%%%%%%%%%%%%%%%%%%%%%%%%%%%%%%%%%%%%%%%%
%%%%%%%%%%%%%%%%%%%%%%%%%%%%%%%%%%%%%%%%%%%%%%%%%%%%%%%%%%%%%%%
\noindent \framebox{Step 4.} The result claimed follows upon gathering \eqref{error_u}, \eqref{error_y} and \eqref{error_p}.
\end{proof}
\begin{remark}[fully computable a posteriori upper bound]
The bound \eqref{eq:reliability} is a genuine upper bound in the sense that the value of the estimator exceeds the value of the true error regardless of the coarseness of the mesh or the nature of the data of the problem. All constants appearing in the bound are fully specified.
\end{remark}
\begin{remark}[Poisson problem]
If we set $\nu=1$, $\bsf = 0$ and $\kappa =0$, the state and adjoint equations become the Poisson problem. By examining the proof of Theorem \ref{th:global_reliability}, it can be seen that if we apply the Poincar\'e inequality $\|\vsf\|_{L^{2}(\Omega)}^2\le C_P\|\nabla \vsf\|_{L^{2}(\Omega)}^2$ for all $\vsf \in H^{1}_0 (\Omega)$, instead of $\|\vsf\|_{L^{2}(\Omega)}^2 \leq \kappa^{-1} \norm{\vsf}_{\Omega}^{2}$, then \eqref{eq:reliability} will hold if the constants $\mathrm{C}_{st}$, $\mathrm{C}_{ad}$ and $\mathrm{C}_{ct}$ in \eqref{eq:indicator_control} are taken to be
\begin{gather*}
\mathrm{C}_{st} =
2+4C_P^2+\frac{8}{\vartheta^2}\left(C_P^3+2C_P^4+4C_P^6\right)
,\quad
\mathrm{C}_{ad} =
2+\frac{4}{\vartheta^2}\left(C_P+2C_P^2+4C_P^4\right)
,\\
\mathrm{C}_{ct} =
2+4C_P+8C_P^3+\frac{8}{\vartheta^2}\left(C_P^4+2C_P^5+4C_P^7\right).
\end{gather*}
The resulting bound is fully computable because an upper bound for $C_{P}$ can be found, for instance, in \cite{MR853915}. 
\end{remark}
%%%%%%%%%%%%%%%%%%%%%%%%%%%%%%%%%%%%%%%%%%%%%%%%%%%%%%%%%%%%%%
%%%%%%%%%%%%%%%%%%%%%%%%%%%%%%%%%%%%%%%%%%%%%%%%%%%%%%%%%%%%%%
%%%%%%%%%%%%%%%%%%%%%%%%%%%%%%%%%%%%%%%%%%%%%%%%%%%%%%%%%%%%%%
%%%%%%%%%%%%%%%%%%%%%%%%%%%%%%%%%%%%%%%%%%%%%%%%%%%%%%%%%%%%%%
%%%%%%%%%%%%%%%%%%%%%%%%%%%%%%%%%%%%%%%%%%%%%%%%%%%%%%%%%%%%%%
%%%%%%%%%%%%%%%%%%%%%%%%%%%%%%%%%%%%%%%%%%%%%%%%%%%%%%%%%%%%%%
%%%%%%%%%%%%%%%%%%%%%%%%%%%%%%%%%%%%%%%%%%%%%%%%%%%%%%%%%%%%%%
%%%%%%%%%%%%%%%%%%%%%%%%%%%%%%%%%%%%%%%%%%%%%%%%%%%%%%%%%%%%%%
%%%%%%%%%%%%%%%%%%%%%%%%%%%%%%%%%%%%%%%%%%%%%%%%%%%%%%%%%%%%%%
%%%%%%%%%%%%%%%%%%%%%%%%%%%%%%%%%%%%%%%%%%%%%%%%%%%%%%%%%%%%%%
%%%%%%%%%%%%%%%%%%%%%%%%%%%%%%%%%%%%%%%%%%%%%%%%%%%%%%%%%%%%%%%
%%%%%%%%%%%%%%%%%%%%%%%%%%%%%%%%%%%%%%%%%%%%%%%%%%%%%%%%%%%%%%
%%%%%%%%%%%%%%%%%%%%%%%%%%%%%%%%%%%%%%%%%%%%%%%%%%%%%%%%%%%%%%
%%%%%%%%%%%%%%%%%%%%%%%%%%%%%%%%%%%%%%%%%%%%%%%%%%%%%%%%%%%%%%
%%%%%%%%%%%%%%%%%%%%%%%%%%%%%%%%%%%%%%%%%%%%%%%%%%%%%%%%%%%%%%
%%%%%%%%%%%%%%%%%%%%%%%%%%%%%%%%%%%%%%%%%%%%%%%%%%%%%%%%%%%%%%
%%%%%%%%%%%%%%%%%%%%%%%%%%%%%%%%%%%%%%%%%%%%%%%%%%%%%%%%%%%%%%
%%%%%%%%%%%%%%%%%%%%%%%%%%%%%%%%%%%%%%%%%%%%%%%%%%%%%%%%%%%%%%
%%%%%%%%%%%%%%%%%%%%%%%%%%%%%%%%%%%%%%%%%%%%%%%%%%%%%%%%%%%%%%
%%%%%%%%%%%%%%%%%%%%%%%%%%%%%%%%%%%%%%%%%%%%%%%%%%%%%%%%%%%%%%
\subsection{Error estimator: efficiency}
\label{subsec:efficiency}
We first write the error equation for the state variable and its approximation, for all $\vsf \in H_{0}^{1}(\Omega)$, as
\begin{align}\nonumber
&\sum_{K\in\T}
(\mathscr{R}_{K}^{st},\vsf)_{L^{2}(K)}
-
2\sum_{\gamma\in\mathcal{F}_{I}}
(\llbracket J_{\gamma}^{st}\rrbracket,\vsf)_{L^{2}(\gamma)}\\\label{eq:jump}
&~~=
\mathcal{B}(\bar{\ysf}-\bar{\ysf}_{\T},\vsf)-(\bar{\usf}-\bar{\usf}_{\T},\vsf)_{L^{2}(\Omega)}-\sum_{K\in\T}(\textrm{osc}_{K}^{st},\vsf)_{L^{2}(K)},
\end{align}
with $\llbracket J_{\gamma}^{st}\rrbracket:=\tfrac{1}{2}(J_{\gamma,K}^{st}+J_{\gamma,K'}^{st})$ if $\gamma\in \mathcal{F}_{K}\cap\mathcal{F}_{K'},~K\neq K'$ where $J_{\gamma,K}^{st}:=\nu\nabla\bar{\ysf}_{\T|K}\cdot \boldsymbol{n}_{\gamma}^{K}$. Bubble function arguments then lead to
$$
\tfrac{h_{K}}{\sqrt{\nu}}\|\mathscr{R}_{K}^{st}\|_{L^{2}(K)}
\leq C
\left(
\mathsf{C}_{K}\norm{\bar{\ysf}-\bar{\ysf}_{\T}}_{K}+\tfrac{h_{K}}{\sqrt{\nu}}\left(\|\textrm{osc}_{K}^{st}\|_{L^{2}(K)}
+\|\bar{\usf}-\bar{\usf}_{\T}\|_{L^{2}(K)}\right)\right),
$$
and 
\begin{align*}
\left(\tfrac{h_{K}}{\nu}\right)^{1/2}\|\llbracket J_{\gamma}^{st}\rrbracket\|_{L^{2}(\gamma)}
& \leq C
\sum_{K'\in\Omega_{\gamma}}
\Big(
\mathsf{C}_{K'}\norm{\bar{\ysf}-\bar{\ysf}_{\T}}_{K'}  \\
& \qquad\qquad\qquad + \tfrac{h_{K'}}{\sqrt{\nu}}\left(\|\textrm{osc}_{K'}^{st}\|_{L^{2}(K')}+\|\bar{\usf}-\bar{\usf}_{\T}\|_{L^{2}(K')}
\right)
\Big).
\end{align*}
Now, we write the error equation for the adjoint variable and its approximation as
\begin{align*}
&\sum_{K\in\T}
(\mathscr{R}_{K}^{ad},\vsf)_{L^{2}(K)}
-
2\sum_{\gamma\in\mathcal{F}_{I}}
(\llbracket J_{\gamma}^{ad}\rrbracket,\vsf)_{L^{2}(\gamma)}\\
&~~=
\mathcal{B}^{*}(\bar{\psf}-\bar{\psf}_{\T},\vsf)-(\bar{\ysf}-\bar{\ysf}_{\T},\vsf)_{L^{2}(\Omega)}-\sum_{K\in\T}(\textrm{osc}_{K}^{ad},\vsf)_{L^{2}(K)},
\end{align*}
for all $\vsf \in H_{0}^{1}(\Omega)$, where, with $J_{\gamma,K}^{ad}:=\nu\nabla\bar{\psf}_{\T|K}\cdot\hat{\boldsymbol{n}}_{\gamma}^{K}$, $\llbracket J_{\gamma}^{ad}\rrbracket$ is defined analogously to $\llbracket J_{\gamma}^{st}\rrbracket$. By again using bubble function arguments, we can establish that 
$$
\tfrac{h_{K}}{\sqrt{\nu}}\|\mathscr{R}_{K}^{ad}\|_{L^{2}(K)}
\leq C
\left(
\mathsf{C}_{K}\norm{\bar{\psf}-\bar{\psf}_{\T}}_{K}+\tfrac{h_{K}}{\sqrt{\nu}}\left(\|\textrm{osc}_{K}^{ad}\|_{L^{2}(K)}
+\tfrac{1}{\sqrt{\kappa}}\norm{\bar{\ysf}-\bar{\ysf}_{\T}}_{(K)}\right)\right),
$$
and 
\begin{align*}
\left(\tfrac{h_{K}}{\nu}\right)^{1/2}\|\llbracket J_{\gamma}^{ad}\rrbracket\|_{L^{2}(\gamma)}
& \leq C
\sum_{K'\in\Omega_{\gamma}}
\Bigg(
\mathsf{C}_{K'}\norm{\bar{\psf}-\bar{\psf}_{\T}}_{K'}  \\
& \qquad\qquad\qquad + \tfrac{h_{K'}}{\sqrt{\nu}}\left(\|\textrm{osc}_{K'}^{ad}\|_{L^{2}(K')}+
\tfrac{1}{\sqrt{\kappa}}\norm{\bar{\ysf}-\bar{\ysf}_{\T}}_{(K')}
\right)
\Bigg).
\end{align*}
Moreover, an application of the triangle inequality, \RR{\eqref{eq:u} and \eqref{Lipschitz}}, yield that
\begin{align*}
\eta_{ct\RR{,K}}=\left\| \bar{\usf}_{\T} - \Pi (-\tfrac{1}{\vartheta}\bar{\psf}_{\T}) \right\|_{L^2(K)} 
& \leq 
\left\| \bar{\usf}- \bar{\usf}_{\T}  \right\|_{L^2(K)} + 
\left\| \Pi (-\tfrac{1}{\vartheta}\bar{\psf})-\Pi (-\tfrac{1}{\vartheta}\bar{\psf}_{\T})  \right\|_{L^2(K)} \\
& \leq 
\left\| \bar{\usf}- \bar{\usf}_{\T}  \right\|_{L^2(K)} + 
\tfrac{1}{\vartheta\sqrt{\kappa}}
\norm{\bar{\psf}-\bar{\psf}_{\T}}_{K}
\end{align*}
\RR{and hence
\begin{equation}\label{ct_efficiency}
\eta_{ct,K}^2\leq 
2\left(\left\| \bar{\usf}- \bar{\usf}_{\T}  \right\|_{L^2(K)}^2 + 
\tfrac{1}{\vartheta^2\kappa}
\norm{\bar{\psf}-\bar{\psf}_{\T}}_{K}^2\right).
\end{equation}}

Gathering all of the previous results with \eqref{global_estimator_state_adjoint} and following the analysis presented in Section \ref{efficiency}, allows us to conclude the following result.
\begin{theorem}[local efficiency]\label{theorem:local efficiency}
Let $(\bar{\ysf},\bar{\psf},\bar{\usf})$ be the solution to \eqref{optimal_system_1} and $(\bar{\ysf}_{\T},\bar{\psf}_{\T},$ $\bar{\usf}_{\T})$ be the solution to \eqref{optimal_system_discrete_1}. In addition, let the stabilization parameters be such that, for all $K\in \T$ and for both the state and adjoint equations, $\mathsf{C}_{\mathcal{S}_{K}}$ can be bounded by a constant which is independent of the size of the elements in the mesh. Then, 
\begin{align*}
\Upsilon_{K}^2 \leq &\mathsf{C}_{ef} \sum_{K'\in D_{K}}
\left(
\RR{\|(e_{\bar{\ysf}},e_{\bar{\psf}},e_{\bar{\usf}})\|_{K'}^2} 
+ h_{K'}\left(\|\mathrm{osc}_{K'}^{st}\|_{L^{2}(K')}^2+\|\mathrm{osc}_{K'}^{ad}\|_{L^{2}(K')}^2  \right)
\right),
\end{align*}
where the constant $\mathsf{C}_{ef}$ depends on the physical parameters in \eqref{optimal_system_1} but is independent of the size of the elements in the mesh and
$D_{K}=\{K'\in\T:~\mathcal{F}_{K'}\cap\mathcal{F}_{K''}\neq\emptyset,~K''\in\hat{\Omega}_{K}\}$ with $\hat{\Omega}_{K}=\{K'\in\T:~K'\cap K \neq\emptyset\}$.
\end{theorem}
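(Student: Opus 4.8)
The plan is to bound each of the three contributions in the decomposition \eqref{eq:indicatorOC} of $\Upsilon_K^2$ separately and then reassemble the estimates over the patch $D_K$. The term $\eta_{ct,K}^2$ requires no further work: the bound \eqref{ct_efficiency} already controls it by $\|\bar{\usf}-\bar{\usf}_{\T}\|_{L^2(K)}^2$ and $\vartheta^{-2}\kappa^{-1}\norm{\bar{\psf}-\bar{\psf}_{\T}}_{K}^2$, both of which are pieces of $\|(e_{\bar{\ysf}},e_{\bar{\psf}},e_{\bar{\usf}})\|_{K}^2$.

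For $\eta_{st,K}$ I would repeat the efficiency analysis of Section \ref{efficiency}, now with $\qsf$ replaced by $\fsf+\bar{\usf}_{\T}$, with $\ysf_{\T}$ replaced by $\bar{\ysf}_{\T}$, and with the identity $\mathcal{B}(\bar{\ysf},\vsf)=(\fsf+\bar{\usf},\vsf)$ playing the role of \eqref{weak_advection-reaction-diffusion}. Concretely: the analog of \eqref{cota_sigma} bounds $\|\boldsymbol{\sigma}_{K}^{st}\|_{\boldsymbol{L}^2(K)}$ by $h_K^{1/2}\sum_{\gamma\in\mathcal{F}_K}\|\mathscr{R}_{\gamma,K}^{st}\|_{L^2(\gamma)}+h_K\|\mathscr{R}_{K}^{st}\|_{L^2(K)}$; writing $\mathscr{R}_{\gamma,K}^{st}=g_{\gamma,K}^{st}-\langle J^{st}\rangle_{\gamma,K}-\llbracket J_{\gamma}^{st}\rrbracket$ and invoking the analog of \eqref{gmJ} for $g_{\gamma,K}^{st}-\langle J^{st}\rangle_{\gamma,K}$, together with the bubble-function estimates for $\tfrac{h_K}{\sqrt{\nu}}\|\mathscr{R}_{K}^{st}\|_{L^2(K)}$ and $(\tfrac{h_K}{\nu})^{1/2}\|\llbracket J_{\gamma}^{st}\rrbracket\|_{L^2(\gamma)}$ already recorded in Section \ref{subsec:efficiency}, the first two terms of $\eta_{st,K}$ become bounded by a sum of $\mathsf{C}_{K'}\norm{\bar{\ysf}-\bar{\ysf}_{\T}}_{K'}$, $\tfrac{h_{K'}}{\sqrt{\nu}}\|\bar{\usf}-\bar{\usf}_{\T}\|_{L^2(K')}$ and $\tfrac{h_{K'}}{\sqrt{\nu}}\|\mathrm{osc}_{K'}^{st}\|_{L^2(K')}$ over elements $K'$ in a ring around $K$. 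The remaining term, $\tfrac{1}{\sqrt{\kappa|K|}}|\mathcal{S}_{K}^{st}(1)|$, is handled method by method as in Section \ref{efficiency}: for SUPG, ES and CIP it vanishes, and in every case the contributions of $\mathcal{S}_{K'}^{st}$ appearing here and through \eqref{gmJ} are bounded by $\mathsf{C}_{\mathcal{S}_{K'}}$ times already-estimated residual and jump quantities, so the standing hypothesis that $\mathsf{C}_{\mathcal{S}_{K}}$ is uniformly bounded in the mesh size lets them be absorbed. This produces a bound for $\eta_{st,K}^2$ of the form $\mathsf{C}\sum_{K'\in D_K}\big(\|(e_{\bar{\ysf}},e_{\bar{\psf}},e_{\bar{\usf}})\|_{K'}^2+h_{K'}\|\mathrm{osc}_{K'}^{st}\|_{L^2(K')}^2\big)$, after using shape regularity to pass from $(\sum\cdots)^2$ to $\sum(\cdots)^2$.

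The bound for $\eta_{ad,K}^2$ is obtained in an identical fashion, now starting from the adjoint error equation and the bubble-function estimates for $\mathscr{R}_{K}^{ad}$ and $\llbracket J_{\gamma}^{ad}\rrbracket$ from Section \ref{subsec:efficiency}; the only difference is that its right-hand side also features $\tfrac{1}{\sqrt{\kappa}}\norm{\bar{\ysf}-\bar{\ysf}_{\T}}_{K'}$, which is already contained in $\|(e_{\bar{\ysf}},e_{\bar{\psf}},e_{\bar{\usf}})\|_{K'}$, and that the oscillation term is $\mathrm{osc}^{ad}$. Finally, inserting the three bounds into \eqref{eq:indicatorOC}, absorbing the mesh-independent constants $\mathrm{C}_{st},\mathrm{C}_{ad},\mathrm{C}_{ct}$, $\mathsf{C}_{K}$, $\mathsf{C}_{\mathcal{S}_{K}}$ and the generic constant $C$ into a single $\mathsf{C}_{ef}$, and observing that the nested sums $\sum_{n\in\mathcal{V}_K}\sum_{K'\in\Omega_n}\sum_{K''\in\Omega_{K'}}$ that arise range over precisely the elements of $D_K$, yields the claimed inequality. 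The main difficulty is purely one of bookkeeping: one must verify, for each of SUPG, GLS, ES and CIP and for \emph{both} the state and adjoint residuals, that the stabilization contributions collapse to quantities already controlled by the error plus data oscillation, and that the two-layer structure of these controls is exactly the patch $D_K$; this is precisely the per-method computation performed for the state equation in Section \ref{efficiency}, now applied on both sides of the optimality system.
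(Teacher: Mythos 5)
Your proposal is correct and follows essentially the same route as the paper: bound $\eta_{ct,K}$ via the Lipschitz property of $\Pi$, and bound $\eta_{st,K}$ and $\eta_{ad,K}$ by rerunning the efficiency analysis of Section \ref{efficiency} with the bubble-function estimates and the analog of \eqref{gmJ}, the only new ingredients being the extra terms $\|\bar{\usf}-\bar{\usf}_{\T}\|_{L^2(K')}$ and $\tfrac{1}{\sqrt{\kappa}}\norm{\bar{\ysf}-\bar{\ysf}_{\T}}_{K'}$ in the state and adjoint residual bounds, which are absorbed into $\|(e_{\bar{\ysf}},e_{\bar{\psf}},e_{\bar{\usf}})\|_{K'}$. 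The bookkeeping of the nested patches into $D_K$ and the use of the hypothesis on $\mathsf{C}_{\mathcal{S}_K}$ are exactly as in the paper.
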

\section{Robust a posteriori error estimation}\label{robustsection}
\RR{
In this section we derive and analyze robust a posteriori error estimates for the approximation of the optimalilty system \eqref{optimal_system_1} \RR{obtained} using the stabilized scheme \eqref{optimal_system_discrete_1}. We immediately comment that by robustness we mean that the constants involved in the upper and lower bounds for the errror are independent of the diffusion parameter $\nu$ and the vector field $\mathbf{b}$. Regarding \RR{the} stabilization, in this section we \RR{only} consider the following techniques: the streamline upwind Petrov--Galerkin method (SUPG) and the continuous interior penalty method (CIP).

The analysis is based on the results by Verf{\"u}rth \cite{MR2182149,MR3059294} and Tobiska and Verf{\"u}rth  \cite{MR3407239}. As in these works, we measure the error in a norm that adds, to the energy norm, the dual norm of the convective derivative. We also refer the reader to \cite{MR3123245,MR3394441,MR2601287,MR3029040,MR1855830,MR2353943} for different approaches.

\subsection{The state \RR{and adjoint equations}}

We assume that \EO{the} data of \EO{the} problem \eqref{weak_advection-reaction-diffusion} satisfy, in addition to the assumptions \textbf{(A1)}--\textbf{(A3)} of section \ref{subsec:state_equation}, the following assumption:
\begin{itemize}
\item[\textbf{(B1)}] $0<  \nu \ll 1$.
\end{itemize}
This assumption emphasize\RR{s} that, in this section, we are interested in the convection--dominated regime.

The presented analysis hinges on an appropriate choice of norm. We define 
\begin{equation}\label{robust-norm}
\|\vsf\|_{\textsf{R}}:=\norm{\vsf}_{\Omega}+\|\RR{\bsf\cdot\nabla}\vsf\|_{*}\quad\forall\vsf\in H_{0}^{1}(\Omega),
\end{equation}
where $\norm{\vsf}_{\Omega}$ is defined as in \eqref{eq:norm}, and 
\begin{equation}\label{dual-norm}
\|\RR{\bsf\cdot\nabla}\vsf\|_{*} = \sup_{\phi\in H_{0}^{1}(\Omega)\setminus\{0\}}\frac{\langle \RR{\bsf\cdot\nabla}\vsf,\phi\rangle}{\norm{\phi}_{\Omega}}.
\end{equation}
The term $\langle \cdot, \cdot \rangle$ denotes the duality pairing between $H_0^1(\Omega)$ and $H^{-1}(\Omega)$.
In this \RR{setting, we} have that
\begin{equation}\label{inf_sup_robust}
\sup_{\vsf\in H_{0}^{1}(\Omega)\setminus\{0\}}\frac{\mathcal{B}(\wsf,\vsf)}{\norm{\vsf}_{\Omega}}
\geq \frac{1}{3}\|\wsf\|_{\RR{\textsf{R}}}\quad\forall\wsf\in H_{0}^{1}(\Omega).
\end{equation}
We refer the reader to \cite{MR2182149}[Lemma 3.1] and \cite{MR3059294}[Proposition 4.17] for \RR{more} details. \RR{We also have that
\begin{equation}\label{inf_sup_robust*}
\sup_{\vsf\in H_{0}^{1}(\Omega)\setminus\{0\}}\frac{\mathcal{B}^*(\wsf,\vsf)}{\norm{\vsf}_{\Omega}}
\geq \frac{1}{3}\|\wsf\|_{\RR{\textsf{R}}}\quad\forall\wsf\in H_{0}^{1}(\Omega).
\end{equation}
Moreover, we note that
\begin{equation}\label{L2Rbound}
\| \vsf\|_{L^{2}(\Omega)}^{2} \leq \tfrac{1}{\kappa}\norm{\vsf}_{\Omega}^{2} \leq \|\vsf\|_{\textsf{R}}^{2}\quad\forall\vsf\in H_{0}^{1}(\Omega).
\end{equation}
}

We follow \cite[section 2]{MR3407239} and define, for $\varrho = st$ or $\varrho = ad$, the error estimator
\begin{equation}
\label{eq:estimator_robust_st_ad}
\mathcal{E}_{\varrho}^{2}:=\sum_{K\in\T}\mathcal{E}_{\varrho,K}^{2},
\end{equation}
where the local error indicators are given by
\[
\mathcal{E}_{\varrho,K}^{2}:=\hslash_{K}^{2}\|\mathcal{R}_{K}^{\varrho}\|_{L^{2}(K)}^{2} + 
\sum_{\gamma\in\mathcal{F}_{K}\cap\mathcal{F}_{I}}\nu^{-1/2}\hslash_{\gamma}\|\llbracket J_{\gamma}^{\varrho}\rrbracket\|_{L^{2}(\gamma)}^{2}.
\]
Here, $\hslash_{\omega} = \min\left\{h_{\omega} \varepsilon^{-1/2},\kappa^{-1/2}\right\}$ for $\omega=K$ or $\omega=\gamma$, $\mathcal{R}_{K}^{\varrho}$ is defined as in \eqref{residuals_osc_state} or \eqref{residuals_osc_adjoint}, and the jump term $\llbracket J_{\gamma}^{\varrho}\rrbracket$ is defined as in section \ref{subsec:efficiency}. We also define, again for $\varrho = st$ or $\varrho = ad$, the global oscillation term
\begin{equation}
\label{eq:oscillation_global_robust}
  \|\mathrm{osc}^{\varrho} \|_{L^{2}(\Omega)} = \left(\sum_{K \in \T} \RR{\hslash_{K}^{2}} \|\mathrm{osc}_{K}^{\varrho} \|_{L^{2}(K)}^2 \right)^{\frac{1}{2}},
\end{equation}
where $\mathrm{osc}_{K}^{\varrho}$ is defined as in \eqref{residuals_osc_state} or \eqref{residuals_osc_adjoint}.

\EO{As stabilization techniques we will consider the SUPG and CIP methods; see \eqref{eq:SUPG}\RR{, \eqref{eq:CIP}, \eqref{eq:SUPGad} and \eqref{eq:CIPad}}. We assume that their stabilization parameters are such that (\cite{MR3407239}[ineq. (2.7) and (2.10)]):
\RR{\begin{equation}
\label{eq:assump_stab_parametersSUPG}
\| \bsf \|_{L^{\infty}(K)} \tau_K \leq C h_K \mbox{ and } \| \bsf \|_{L^{\infty}(K)} \tau_K^* \leq C h_K \mbox{ } \forall K \in \T
\end{equation}
and
\begin{equation}
\label{eq:assump_stab_parametersCIP}
\tau_{\gamma} \leq C h_{\gamma}^2 \mbox{ and } \tau_{\gamma}^* \leq C h_{\gamma}^2 \mbox{ } \forall \gamma \in \mathcal{F}_{I},
\end{equation}}
where $C$ is independent of the \RR{size of the elements in the mesh} and the physical \RR{parameters}. We also define $\displaystyle\Theta(\xi)^{2}=\sum_{K\in\T}\Theta_{K}(\xi)^{2}$, where
\begin{equation*}
 \Theta_{K}(\xi)=
 \left\{\begin{array}{ll}
 \hslash_{K}\|(\bsf-\Pi_{K}(\bsf))\cdot\nabla \xi\|_{\boldsymbol{L}^{2}(K)} +
\hslash_{K}h_{K}\| \nabla \bsf\|_{\boldsymbol{L}^{\infty}(K)}\|\nabla\xi\|_{\boldsymbol{L}^{2}(K)}, & \mbox{CIP},
\\
0, & \mbox{SUPG}.
\end{array}\right.
\end{equation*}

The \RR{assumptions \eqref{eq:assump_stab_parametersSUPG} and \eqref{eq:assump_stab_parametersCIP}} on the stabilization parameters guarantee that the so--called consistency error can be bounded robustly by the residual estimator \eqref{eq:estimator_robust_st_ad} and the oscillation term \eqref{eq:oscillation_global_robust}; see \cite{MR3407239}[Lemma 2.3 and Lemma 2.6]. This is a key result in the derivation of the main result of} \cite[Theorem 2.8]{MR3407239} (see also \cite[Theorem 4.1]{MR2182149}). Let $\hat \ysf$ be the solution to problem \eqref{eq:y_hat} and $\bar \ysf_{\T}$ be the solution to the first variational equation of the discrete optimality system \eqref{optimal_system_discrete_1}. If \RR{\eqref{eq:assump_stab_parametersSUPG} and \eqref{eq:assump_stab_parametersCIP} hold}, then
\begin{equation}\label{eq:robust_st}
\tfrac{1}{\RR{D_{\ysf}}} \mathcal{E}_{\textsf{st}} - \|\mathrm{osc}^{st} \|_{L^{2}(\Omega)}   \leq \|\hat{\ysf}-\bar{\ysf}_{\T}\|_{\textsf{R}} \leq \RR{C_{\ysf}}\left( \mathcal{E}_{\textsf{st}}^2 +   \|\mathrm{osc}^{st}\|^2_{L^{2}(\Omega)} +
\EO{\Theta(\bar{\ysf}_{\T})^{2}} \right)^{\frac{1}{2}}.
\end{equation}
We have a similar result for the numerical approximation of the adjoint equation. Let $\hat \psf$ be the solution to problem \eqref{eq:p_hat} and \RR{$\bar \psf_{\T}$} be the solution to the second variational equation of the discrete optimality system \eqref{optimal_system_discrete_1}. If \RR{\eqref{eq:assump_stab_parametersSUPG} and \eqref{eq:assump_stab_parametersCIP} hold}, then
\begin{equation}\label{eq:robust_ad}
\tfrac{1}{\RR{D_{\psf}}} \mathcal{E}_{\textsf{ad}} -   \|\mathrm{osc}^{ad}\|_{L^{2}(\Omega)} \leq \|\hat{\psf}-\bar{\psf}_{\T}\|_{\textsf{R}} \leq \RR{C_{\psf}} \left( \mathcal{E}^2_{\textsf{ad}} +   \|\mathrm{osc}^{ad}\|^2_{L^{2}(\Omega)} +
\EO{\Theta(\bar{\psf}_{\T})^{2}}\right)^{\frac{1}{2}}.
\end{equation}
\RR{We immediately comment that \RR{the estimates \eqref{eq:robust_st} and \eqref{eq:robust_ad}} are robust in the sense that the constants $C_{\ysf}$, $C_{\psf}$, $D_{\ysf}$ and $D_{\psf}$ are independent of $\nu$ and $\mathbf{b}$.}

%%%%%%%%%%%%%%%%%%%%%%%%%%%%%%%%%%%%%%%%%%%%%%%%%%%%%%%%%%%%%%%%%%%%%
%%%%%%%%%%%%%%%%%%%%%%%%%%%%%%%%%%%%%%%%%%%%%%%%%%%%%%%%%%%%%%%%%%%%%
%%%%%%%%%%%%%%%%%%%%%%%%%%%%%%%%%%%%%%%%%%%%%%%%%%%%%%%%%%%%%%%%%%%%%
%%%%%%%%%%%%%%%%%%%%%%%%%%%%%%%%%%%%%%%%%%%%%%%%%%%%%%%%%%%%%%%%%%%%%
%%%%%%%%%%%%%%%%%%%%%%%%%%%%%%%%%%%%%%%%%%%%%%%%%%%%%%%%%%%%%%%%%%%%%
%%%%%%%%%%%%%%%%%%%%%%%%%%%%%%%%%%%%%%%%%%%%%%%%%%%%%%%%%%%%%%%%%%%%%

\subsection{The optimal control problem}

We now derive a posteriori error estimates for the discretization \RR{of the optimal control problem \eqref{min}--\eqref{control_constraint} proposed in section 
\ref{control_optimo}}. \RR{We now present a modification of the analysis elaborated in the proof of Theorem \ref{th:global_reliability} in order to obtain an estimator, for the error $\|\bar{\ysf} - \bar{\ysf}_{\T}\|_{\textsf{R}}^2 
 + \|\bar{\psf} - \bar{\psf}_{\T}\|_{\textsf{R}}^2
 + \| \bar{\usf} - \bar{\usf}_{\T}\|_{L^2(\Omega)}^2$, that is robust with repect to $\nu$ and $\mathbf{b}$ in the sense that the constants involved in the upper and lower bounds for the error are independent of $\nu$ and $\mathbf{b}$.}

\begin{theorem}[global reliability]
Let $(\bar{\ysf},\bar{\psf},\bar{\usf}) \in H_0^1(\Omega) \times H_0^1(\Omega) \times L^2(\Omega)$ be the solution to \eqref{optimal_system_1} and $(\bar{\ysf}_{\T},\bar{\psf}_{\T},\bar{\usf}_{\T}) \in \V(\T) \times \V(\T) \times \U_{\textrm{ad}}(\T)$ be its numerical approximation obtained as the solution to \eqref{optimal_system_discrete_1}\RR{. If the stabilization parameters are such that \RR{\eqref{eq:assump_stab_parametersSUPG} and \eqref{eq:assump_stab_parametersCIP} hold}, then
\begin{equation}
 \label{eq:robust_reliability}
 \|\bar{\ysf} - \bar{\ysf}_{\T}\|_{\textsf{R}}^2 
 + \|\bar{\psf} - \bar{\psf}_{\T}\|_{\textsf{R}}^2
 + \| \bar{\usf} - \bar{\usf}_{\T}\|_{L^2(\Omega)}^2
 \leq 
 \textsc{C}_1\Upsilon_{\textsf{R}}^2
 \end{equation}
 where
\begin{equation}\label{eq:robust_estimator}
\Upsilon_{\textsf{R}}^2=
\mathcal{E}_{\textsf{st}}^2 +\mathcal{E}^2_{\textsf{ad}}+\eta_{ct}^2
+\|\mathrm{osc}^{st}\|^2_{L^{2}(\Omega)}+\|\mathrm{osc}^{ad}\|^2_{L^{2}(\Omega)}
+\Theta(\bar{\ysf}_{\T})^{2}+\Theta(\bar{\psf}_{\T})^{2}
\end{equation}
and the positive constant $\textsc{C}_1$ is independent of the size of the elements in the mesh, $\nu$ and $\mathbf{b}$.}
\end{theorem}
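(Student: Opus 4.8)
The plan is to mimic the four--step structure of the proof of Theorem~\ref{th:global_reliability}, replacing the energy norm $\norm{\cdot}_\Omega$ by the robust norm $\|\cdot\|_{\textsf{R}}$ wherever the convective derivative needs to be controlled, and using the robust a posteriori estimates \eqref{eq:robust_st} and \eqref{eq:robust_ad} in place of \eqref{final_upper_bound_state_adjoint}. First I would introduce the auxiliary functions $\hat\ysf$, $\hat\psf$ (from \eqref{eq:y_hat}--\eqref{eq:p_hat}) and $\tilde\usf = \Pi(-\tfrac{1}{\vartheta}\bar\psf_{\T})$, $\tilde\ysf$, $\tilde\psf$ exactly as before. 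The triangle inequality gives $\|\bar\ysf - \bar\ysf_{\T}\|_{\textsf{R}}^2 \leq 2\|\bar\ysf - \hat\ysf\|_{\textsf{R}}^2 + 2\|\hat\ysf - \bar\ysf_{\T}\|_{\textsf{R}}^2$, and the second term is bounded by the right--hand side of \eqref{eq:robust_st}; analogously for $\bar\psf$ via \eqref{eq:robust_ad}. So the crux is to bound the "continuous--to--continuous" differences $\|\bar\ysf - \hat\ysf\|_{\textsf{R}}$, $\|\bar\psf - \hat\psf\|_{\textsf{R}}$ and $\|\bar\usf - \tilde\usf\|_{L^2(\Omega)}$ in terms of $\eta_{ct}$ and the energy/robust norms already controlled.

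Next, for the control error I would reproduce Step~1 of Theorem~\ref{th:global_reliability} almost verbatim: the key identity $(\bar\psf - \tilde\psf, \tilde\usf - \bar\usf)_{L^2(\Omega)} = -\|\bar\ysf - \tilde\ysf\|_{L^2(\Omega)}^2 \leq 0$ still holds since it only uses the continuous state/adjoint equations, giving $\|\bar\usf - \tilde\usf\|_{L^2(\Omega)}^2 \leq \tfrac{2}{\vartheta^2}\|\tilde\psf - \hat\psf\|_{L^2(\Omega)}^2 + \tfrac{2}{\vartheta^2}\|\hat\psf - \bar\psf_{\T}\|_{L^2(\Omega)}^2$. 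The last term is handled by $\|\hat\psf - \bar\psf_{\T}\|_{L^2(\Omega)}^2 \leq \|\hat\psf - \bar\psf_{\T}\|_{\textsf{R}}^2$ from \eqref{L2Rbound}, which is bounded using \eqref{eq:robust_ad}; the term $\|\tilde\psf - \hat\psf\|_{L^2(\Omega)}$ is controlled, via $\mathcal{B}^*$--coercivity in $L^2$ (using $\kappa>0$ and $\mathbf{b}$ solenoidal) exactly as in the original proof, by $\|\tilde\ysf - \hat\ysf\|_{L^2(\Omega)}$, which in turn is bounded by $\tfrac{1}{\kappa}\|\tilde\usf - \bar\usf_{\T}\|_{L^2(\Omega)} = \tfrac{1}{\kappa}\eta_{ct}$. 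Chaining these gives $\|\bar\usf - \bar\usf_{\T}\|_{L^2(\Omega)}^2$ bounded by a constant (depending on $\vartheta,\kappa$ only, not on $\nu$ or $\mathbf{b}$) times $\mathcal{E}_{\textsf{st}}^2 + \mathcal{E}_{\textsf{ad}}^2 + \eta_{ct}^2 + \|\mathrm{osc}^{st}\|^2 + \|\mathrm{osc}^{ad}\|^2 + \Theta(\bar\ysf_{\T})^2 + \Theta(\bar\psf_{\T})^2$.

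Then, for the state error I would use the inf--sup--type estimate \eqref{inf_sup_robust}: since $\mathcal{B}(\bar\ysf - \hat\ysf, \vsf) = (\bar\usf - \bar\usf_{\T}, \vsf)_{L^2(\Omega)}$ for all $\vsf\in H_0^1(\Omega)$, we get $\tfrac13\|\bar\ysf - \hat\ysf\|_{\textsf{R}} \leq \sup_{\vsf} \tfrac{(\bar\usf - \bar\usf_{\T},\vsf)_{L^2(\Omega)}}{\norm{\vsf}_\Omega} \leq \tfrac{1}{\sqrt\kappa}\|\bar\usf - \bar\usf_{\T}\|_{L^2(\Omega)}$, where the last step uses $\|\vsf\|_{L^2(\Omega)} \leq \kappa^{-1/2}\norm{\vsf}_\Omega$. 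Combined with the triangle inequality and \eqref{eq:robust_st}, this bounds $\|\bar\ysf - \bar\ysf_{\T}\|_{\textsf{R}}^2$ by the desired quantity. Similarly, for the adjoint error, $\mathcal{B}^*(\bar\psf - \hat\psf, \wsf) = (\bar\ysf - \bar\ysf_{\T}, \wsf)_{L^2(\Omega)}$, so \eqref{inf_sup_robust*} gives $\tfrac13\|\bar\psf - \hat\psf\|_{\textsf{R}} \leq \tfrac{1}{\sqrt\kappa}\|\bar\ysf - \bar\ysf_{\T}\|_{L^2(\Omega)} \leq \tfrac{1}{\kappa}\|\bar\ysf - \bar\ysf_{\T}\|_{\textsf{R}}$ (again by \eqref{L2Rbound}), and then \eqref{eq:robust_ad} closes the estimate. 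Finally I would collect the three bounds, absorb all the $\nu$-- and $\mathbf{b}$--independent algebraic factors into a single constant $\textsc{C}_1$, and recognize the right--hand side as $\textsc{C}_1 \Upsilon_{\textsf{R}}^2$ with $\Upsilon_{\textsf{R}}^2$ as in \eqref{eq:robust_estimator}.

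The main obstacle I anticipate is bookkeeping rather than conceptual: one must verify that at every place where the original proof invoked $\|\cdot\|_{L^2(\Omega)} \leq \kappa^{-1/2}\norm{\cdot}_\Omega$ or the energy a posteriori bound \eqref{final_upper_bound_state_adjoint}, the substitution by \eqref{L2Rbound} together with \eqref{eq:robust_st}--\eqref{eq:robust_ad} does not introduce a hidden dependence on $\nu$ or $\mathbf{b}$ — in particular the constants $C_{\ysf}, C_{\psf}, D_{\ysf}, D_{\psf}$ are robust by assumption, and the coercivity constant of $\mathcal{B}$ and $\mathcal{B}^*$ in $L^2$ is exactly $\kappa$ (independent of $\nu$, $\mathbf{b}$) thanks to $\mathbf{div}\,\bsf = 0$. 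One mild subtlety is that the inf--sup estimate \eqref{inf_sup_robust} only directly controls $\|\bar\ysf - \hat\ysf\|_{\textsf{R}}$ by the $L^2$ norm of the control error through the $\norm{\cdot}_\Omega$--normalized supremum; since $(\bar\usf - \bar\usf_{\T}, \vsf)_{L^2(\Omega)} \leq \|\bar\usf - \bar\usf_{\T}\|_{L^2(\Omega)}\|\vsf\|_{L^2(\Omega)} \leq \kappa^{-1/2}\|\bar\usf - \bar\usf_{\T}\|_{L^2(\Omega)}\norm{\vsf}_\Omega$ this works, but one should state it carefully. Everything else is a direct transcription of Steps~1--4 of the proof of Theorem~\ref{th:global_reliability}.
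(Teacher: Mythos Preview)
Your proposal is correct and follows essentially the same four--step structure as the paper's proof: the triangle--inequality decompositions, the auxiliary functions $\hat\ysf,\hat\psf,\tilde\usf,\tilde\ysf,\tilde\psf$, the use of \eqref{inf_sup_robust}--\eqref{inf_sup_robust*} to pass from the robust norm to an $L^2$--type bound on the right--hand side, and the application of \eqref{eq:robust_st}--\eqref{eq:robust_ad} in place of \eqref{final_upper_bound_state_adjoint} are exactly what the paper does. The only cosmetic difference is that in Step~1 the paper bounds $\|\tilde\psf-\hat\psf\|_{L^2(\Omega)}$ and $\|\tilde\ysf-\hat\ysf\|$ via the inf--sup estimates \eqref{inf_sup_robust}--\eqref{inf_sup_robust*} (passing through the $\textsf{R}$--norm) rather than via $L^2$--coercivity of $\mathcal{B}^*$ and $\mathcal{B}$ as you suggest; both routes yield constants depending only on $\kappa$ and $\vartheta$, so the robustness conclusion is unaffected.
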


\begin{proof}
\noindent \framebox{Step 1.} We begin the proof by recalling the estimate \eqref{u-uh-utilde}:
$$
\|  \bar{\usf} -\bar{\usf}_{\T} \|_{L^2(\Omega)}^{2} 
\leq 
2\|  \bar{\usf} -\tilde{\usf} \|_{L^2(\Omega)}^{2} + 2\eta_{ct}^{2}, \quad \tilde{\usf} = \Pi (-\tfrac{1}{\vartheta}\bar{\psf}_{\T}),
$$
where $\eta_{ct}$ is defined as in \eqref{eq:indicator_control}. We control the first term on the right hand side of the previous expression \RR{using the intermediate step in} \eqref{eq:usf-tildeusf_3}. We thus have that
\begin{align*}
\| \bar{\usf} - \tilde{\usf} \|^2_{L^2(\Omega)} & \leq 
\tfrac{2}{\vartheta^2} \| \tilde{\psf}  - \hat{\psf}\|_{L^{2}(\Omega)}^{2} + 
\tfrac{2}{\vartheta^2\kappa}\| \hat{\psf} - \bar{\psf}_{\T}\|_{\textsf{R}}^{2} 
\\
& \leq
\tfrac{2}{\vartheta^2}\| \tilde{\psf}  - \hat{\psf}\|_{L^{2}(\Omega)}^{2} + 
\tfrac{2 \RR{C_{\psf}^{2}}}{\vartheta^2\kappa}\left( \mathcal{E}^2_{\textsf{ad}} +   \|\mathrm{osc}^{ad}\|^2_{L^{2}(\Omega)} \RR{+\Theta(\bar{\psf}_{\T})^{2}}\right),
\end{align*}
\RR{upon using \eqref{L2Rbound} and} \eqref{eq:robust_ad}. We now proceed to control the term  $\| \tilde{\psf}  - \hat{\psf}\|_{L^{2}(\Omega)}$. To accomplish this task, we invoke \RR{\eqref{L2Rbound} and \eqref{inf_sup_robust*} to} obtain that
 \begin{align*}
\|\tilde{\psf}  - \hat{\psf}\|_{L^{2}(\Omega)}
 \leq \frac{1}{\sqrt{\kappa}}
\|\tilde{\psf}  - \hat{\psf}\|_{\textsf{R}} & 
\leq \frac{3}{\sqrt{\kappa}} \sup_{\vsf\in H_{0}^{1}(\Omega)\setminus\{0\}}
\frac{\mathcal{B}^{*}(\tilde{\psf}  - \hat{\psf},\vsf)}{\norm{\vsf}_{\Omega}}.
\end{align*}
We now utilize the problem that $\tilde{\psf}  - \hat{\psf}$ solves \RR{and \eqref{L2Rbound}} to arrive at
\begin{equation}
\label{eq:paso_vaca}
\|\tilde{\psf}  - \hat{\psf}\|_{L^{2}(\Omega)} \leq \frac{3}{\sqrt{\kappa}} \sup_{\vsf\in H_{0}^{1}(\Omega)\setminus\{0\}}
\frac{(\tilde{\ysf}-\bar{\ysf}_{\T},\vsf)_{L^{2}(\Omega)}}{\norm{\vsf}_{\Omega}} 
\leq \frac{3}{\kappa^{3/2}}\RR{\|\tilde{\ysf}-\bar{\ysf}_{\T}\|_{\textsf{R}}}.
\end{equation}
To control the right hand side of the previous expression we invoke the triangle inequality and the a posteriori error estimate \eqref{eq:robust_st}. This yields \RR{that}
\begin{align*}
\|\tilde{\psf}  - \hat{\psf}\|_{L^{2}(\Omega)}^{2} 
 \leq  \tfrac{9}{\kappa^3} \|\tilde{\ysf}-\bar{\ysf}_{\T}\|_{\textsf{R}}^2 \leq \tfrac{18}{\kappa^3} \|\tilde{\ysf}-\hat{\ysf}\|_{\textsf{R}}^2 + \tfrac{18\RR{C_{\ysf}^2}}{\kappa^3}\left( \mathcal{E}_{\textsf{st}}^2 +   \|\mathrm{osc}^{st}\|^2_{L^{2}(\Omega)} \RR{+\Theta(\bar{\ysf}_{\T})^{2}}\right).
\end{align*}
The term $  \|\tilde{\ysf}-\hat{\ysf}\|_{\textsf{R}} $ is controlled using similar arguments to the ones that lead to \eqref{eq:paso_vaca}:
\begin{align*}
\|\tilde{\ysf}  - \hat{\ysf}\|_{\textsf{R}}  
\leq 3
\sup_{\vsf\in H_{0}^{1}(\Omega)\setminus\{0\}}
\frac{\mathcal{B}(\tilde{\ysf}  - \hat{\ysf},\vsf)}{\norm{\vsf}_{\Omega}} 
\leq \frac{3}{\sqrt{\kappa}}
\|\tilde{\usf}-\bar{\usf}_{\T}\|_{L^{2}(\Omega)} \leq \frac{3}{\sqrt{\kappa}} \eta_{ct},
\end{align*}
where $\eta_{ct}$ is the \RR{estimator} asocciated with the control variable that is defined in \eqref{eq:indicator_control}. By gathering our previous findings, we conclude that
\begin{equation}
\label{eq:robust_control}
\begin{split}
\|\bar{\usf}-\bar{\usf}_{\T}\|_{L^{2}(\Omega)}^{2}
\leq 
&\left(2+\tfrac{648}{\vartheta^{2}\kappa^{4}}\right)\eta_{ct}^{2} +
\tfrac{4\RR{C_{\psf}^{2}}}{\vartheta^{2}\kappa}\left( \mathcal{E}_{ad}^{2} +\| \mathrm{osc}^{ad} \|^2_{L^2(\Omega)} \RR{+\Theta(\bar{\psf}_{\T})^{2}}\right)
\\
 &+ \tfrac{72\RR{C_{\ysf}^{2}}}{\vartheta^{2} \kappa^3} \left( \mathcal{E}_{\textsf{st}}^{2} + \| \mathrm{osc}^{st} \|^2_{L^2(\Omega)} \RR{+\Theta(\bar{\ysf}_{\T})^{2}}\right).
\end{split}
\end{equation}
~\\
%%%%%%%%%%%%%%%%%%%%%%%%%%%%%%%%%%%%%%%%%%%%%%%%%%%%%%%%%%%%%%%%%%
\noindent \framebox{Step 2.}  The goal of this step is to control the error $\| \bar{\ysf}-\bar{\ysf}_{\T}\|_{\textsf{R}}$. In fact, using \RR{standard inequalities} and \eqref{eq:robust_st}, we arrive at 
\[
\|\bar{\ysf}-\bar{\ysf}_{\T}\|_{\textsf{R}}^{2}
\leq 
2\|\bar{\ysf}-\hat{\ysf}\|_{\textsf{R}}^{2}+2\RR{C_{\ysf}^{2}}\left( \mathcal{E}_{\textsf{st}}^2 +   \|\mathrm{osc}^{st}\|^2_{L^{2}(\Omega)} \RR{+\Theta(\bar{\ysf}_{\T})^{2}}\right).
\]
\RR{By using} \eqref{inf_sup_robust} and the problem that $\bar{\ysf}-\hat{\ysf}$ solves, we obtain that
\begin{align*}
\|\bar{\ysf}-\hat{\ysf}\|_{\textsf{R}}
 \leq 3
\sup_{\vsf\in H_{0}^{1}(\Omega)\setminus\{0\}}
\frac{\mathcal{B}(\bar{\ysf}-\hat{\ysf},\vsf)}{\norm{\vsf}_{\Omega}} 
& \leq  \frac{3}{\sqrt{\kappa}}
\|\bar{\usf}-\bar{\usf}_{\T}\|_{L^{2}(\Omega)}.
\end{align*}
Therefore, invoking \eqref{eq:robust_control} we arrive at
\begin{equation}
\label{eq:robust_state}
\begin{split}
\|\bar{\ysf}-\bar{\ysf}_{\T}\|_{\textsf{R}}^{2}  \leq &
\frac{18}{\kappa}
\left(2+\tfrac{648}{\vartheta^{2}\kappa^{4}}\right)\eta_{ct}^{2} +
\tfrac{72\RR{C_{\psf}^{2}}}{\vartheta^{2}\kappa^{2}}\left( \mathcal{E}_{ad}^{2} +\| \mathrm{osc}^{ad} \|^2_{L^2(\Omega)} \RR{+\Theta(\bar{\psf}_{\T})^{2}}\right)
\\ 
& + \left(2+\tfrac{1296}{\vartheta^{2} \kappa^4}\right) \RR{C_{\ysf}^{2}} \left( \mathcal{E}_{\textsf{st}}^{2} + \| \mathrm{osc}^{st} \|^2_{L^2(\Omega)} \RR{+\Theta(\bar{\ysf}_{\T})^{2}}\right).
\end{split}
\end{equation}
~\\
%%%%%%%%%%%%%%%%%%%%%%%%%%%%%%%%%%%%%%%%%%%%%%%%%%%%%%%%%%%%%%%%%%
\noindent \framebox{Step 3.} In this step we \RR{bound $\| \bar{\psf}-\bar{\psf}_{\T} \|_{\textsf{R}}$}. We invoke similar arguments to the ones elaborated in step 2 and conclude that 
\[
\|\bar{\psf}-\bar{\psf}_{\T}\|_{\textsf{R}}^{2}
\leq 2
(\|\bar{\psf}-\hat{\psf}\|_{\textsf{R}}^{2}+\|\hat{\psf}-\bar{\psf}_{\T}\|_{\textsf{R}}^{2})
\leq 
2\|\bar{\psf}-\hat{\psf}\|_{\textsf{R}}^{2}+2\RR{C_{\psf}^{2}\left( \mathcal{E}_{ad}^{2} +\| \mathrm{osc}^{ad} \|^2_{L^2(\Omega)} +\Theta(\bar{\psf}_{\T})^{2}\right)}
\]
and that
\begin{align*}
\|\bar{\psf}-\hat{\psf}\|_{\textsf{R}}
 \leq 3
\sup_{\vsf\in H_{0}^{1}(\Omega)\setminus\{0\}}
\frac{\mathcal{B}^{*}(\bar{\psf}-\hat{\psf},\vsf)}{\norm{\vsf}_{\Omega}} 
\leq  \frac{3}{\kappa} \|\bar{\ysf}-\bar{\ysf}_{\T}\|_{\textsf{R}}.
\end{align*}
We thus use these estimates, \RR{and \eqref{eq:robust_state}, to} arrive at
\begin{equation}
\label{eq:robust_adjoint}
\begin{split}
\|\bar{\psf}-\bar{\psf}_{\T}\|_{\textsf{R}}^{2} \leq & 
\RR{\tfrac{324}{\kappa^3} 
\left(2+\tfrac{648}{\vartheta^{2}\kappa^{4}}\right)}\eta_{ct}^{2}
\\
&+\RR{\left(2 + \tfrac{1296}{\vartheta^{2}\kappa^{4}}\right) C_{\psf}^{2}} \left( \mathcal{E}_{ad}^{2} +\| \mathrm{osc}^{ad} \|^2_{L^2(\Omega)} \RR{+\Theta(\bar{\psf}_{\T})^{2}}\right)
\\ 
&+ \tfrac{18}{\kappa^\RR{2}}\left(2+\tfrac{1296}{\vartheta^{2} \kappa^4}\right) \RR{C_{\ysf}^{2}} \left( \mathcal{E}_{\textsf{st}}^{2} + \| \mathrm{osc}^{st} \|^2_{L^2(\Omega)} \RR{+\Theta(\bar{\ysf}_{\T})^{2}}\right) .
\end{split}
\end{equation}
~\\
%%%%%%%%%%%%%%%%%%%%%%%%%%%%%%%%%%%%%%%%%%%%%%%%%%%%%%%%%%%%%%%%%%
\noindent \framebox{Step 4.} Finally, \RR{combining} the estimates \eqref{eq:robust_control}, \eqref{eq:robust_state}, and \eqref{eq:robust_adjoint} allows us to arrive at \RR{\eqref{eq:robust_reliability}.}
\end{proof}

\RR{We now provide an efficiency analysis.

\begin{theorem}[global efficiency]
Let $(\bar{\ysf},\bar{\psf},\bar{\usf}) \in H_0^1(\Omega) \times H_0^1(\Omega) \times L^2(\Omega)$ be the solution to \eqref{optimal_system_1} and $(\bar{\ysf}_{\T},\bar{\psf}_{\T},\bar{\usf}_{\T}) \in \V(\T) \times \V(\T) \times \U_{\textrm{ad}}(\T)$ be its numerical approximation obtained as the solution to \eqref{optimal_system_discrete_1}. If the stabilization parameters are such that \RR{\eqref{eq:assump_stab_parametersSUPG} and \eqref{eq:assump_stab_parametersCIP} hold}, then
\begin{equation}
 \label{eq:robust_efficiency}
 \begin{split}
 \mathcal{E}_{\textsf{st}}^2 +\mathcal{E}^2_{\textsf{ad}}+\eta_{ct}^2
 \leq &
 \textsc{C}_2\Big(
 \|\bar{\ysf} - \bar{\ysf}_{\T}\|_{\textsf{R}}^2 
 + \|\bar{\psf} - \bar{\psf}_{\T}\|_{\textsf{R}}^2
 + \| \bar{\usf} - \bar{\usf}_{\T}\|_{L^2(\Omega)}^2
\\
&+\|\mathrm{osc}^{st}\|^2_{L^{2}(\Omega)}+\|\mathrm{osc}^{ad}\|^2_{L^{2}(\Omega)}
\Big)
\end{split}
 \end{equation}
where the positive constant $\textsc{C}_2$ is independent of the size of the elements in the mesh, $\nu$ and $\mathbf{b}$.
\end{theorem}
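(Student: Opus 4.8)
The plan is to run, in reverse, the same three--step bookkeeping used in the proof of the preceding reliability theorem, now exploiting the \emph{left--hand} (efficiency) inequalities in \eqref{eq:robust_st} and \eqref{eq:robust_ad}, the triangle inequality, and the inf--sup bounds \eqref{inf_sup_robust}, \eqref{inf_sup_robust*} together with \eqref{L2Rbound}. The only genuinely difficult ingredients---the robust local lower bounds for the residual estimators of the state and adjoint equations, including the $\Theta$--free control of the stabilization--induced consistency error under \eqref{eq:assump_stab_parametersSUPG}--\eqref{eq:assump_stab_parametersCIP}---are already packaged in \eqref{eq:robust_st}--\eqref{eq:robust_ad} (from \cite{MR3407239}), so the remaining work is essentially algebraic.

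First I would handle the state estimator. Rearranging the left inequality in \eqref{eq:robust_st} gives $\mathcal{E}_{\textsf{st}}\le D_{\ysf}\big(\|\hat{\ysf}-\bar{\ysf}_{\T}\|_{\textsf{R}}+\|\mathrm{osc}^{st}\|_{L^{2}(\Omega)}\big)$, so by a triangle inequality it suffices to bound $\|\hat{\ysf}-\bar{\ysf}\|_{\textsf{R}}$. Since $\hat{\ysf}$ solves \eqref{eq:y_hat} and $\bar{\ysf}$ solves the state equation in \eqref{optimal_system_1}, their difference satisfies $\mathcal{B}(\hat{\ysf}-\bar{\ysf},\vsf)=(\bar{\usf}_{\T}-\bar{\usf},\vsf)_{L^{2}(\Omega)}$ for all $\vsf\in H_{0}^{1}(\Omega)$; applying \eqref{inf_sup_robust} together with Cauchy--Schwarz and \eqref{L2Rbound} yields $\|\hat{\ysf}-\bar{\ysf}\|_{\textsf{R}}\le 3\kappa^{-1/2}\|\bar{\usf}-\bar{\usf}_{\T}\|_{L^{2}(\Omega)}$. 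Hence
\[
\mathcal{E}_{\textsf{st}}^{2}\le C\big(\|\bar{\ysf}-\bar{\ysf}_{\T}\|_{\textsf{R}}^{2}+\|\bar{\usf}-\bar{\usf}_{\T}\|_{L^{2}(\Omega)}^{2}+\|\mathrm{osc}^{st}\|_{L^{2}(\Omega)}^{2}\big),
\]
with $C$ depending only on $D_{\ysf}$ and $\kappa$. Next I would repeat this for the adjoint estimator, starting from \eqref{eq:robust_ad}: here $\hat{\psf}-\bar{\psf}$ solves $\mathcal{B}^{*}(\hat{\psf}-\bar{\psf},\wsf)=(\bar{\ysf}_{\T}-\bar{\ysf},\wsf)_{L^{2}(\Omega)}$ by \eqref{eq:p_hat} and \eqref{optimal_system_1}, so \eqref{inf_sup_robust*} and \eqref{L2Rbound} (used twice) give $\|\hat{\psf}-\bar{\psf}\|_{\textsf{R}}\le 3\kappa^{-1}\|\bar{\ysf}-\bar{\ysf}_{\T}\|_{\textsf{R}}$, whence
\[
\mathcal{E}_{\textsf{ad}}^{2}\le C\big(\|\bar{\psf}-\bar{\psf}_{\T}\|_{\textsf{R}}^{2}+\|\bar{\ysf}-\bar{\ysf}_{\T}\|_{\textsf{R}}^{2}+\|\mathrm{osc}^{ad}\|_{L^{2}(\Omega)}^{2}\big).
\]

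For the control contribution I would sum \eqref{ct_efficiency} over $K\in\T$ and use $\norm{\bar{\psf}-\bar{\psf}_{\T}}_{\Omega}^{2}\le\|\bar{\psf}-\bar{\psf}_{\T}\|_{\textsf{R}}^{2}$ to obtain $\eta_{ct}^{2}\le 2\|\bar{\usf}-\bar{\usf}_{\T}\|_{L^{2}(\Omega)}^{2}+2(\vartheta^{2}\kappa)^{-1}\|\bar{\psf}-\bar{\psf}_{\T}\|_{\textsf{R}}^{2}$. Adding the three estimates and collecting constants produces \eqref{eq:robust_efficiency}. Robustness is then automatic: $D_{\ysf}$ and $D_{\psf}$ are independent of $\nu$ and $\mathbf{b}$ (as recorded just after \eqref{eq:robust_ad}), and every other constant involved depends only on $\kappa$, $\vartheta$ and the absolute number $1/3$ coming from \eqref{inf_sup_robust}--\eqref{inf_sup_robust*}, so $\textsc{C}_{2}$ is independent of the mesh size, $\nu$ and $\mathbf{b}$.

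The only point that requires care---and the place where a careless argument would fail---is ensuring that the passage from $\hat{\ysf},\hat{\psf}$ back to $\bar{\ysf},\bar{\psf}$ costs only powers of $\kappa^{-1/2}$ and never powers of $\nu^{-1}$. This is exactly what is secured by invoking the $\nu$--independent inf--sup inequalities \eqref{inf_sup_robust} and \eqref{inf_sup_robust*} in tandem with $\|v\|_{L^{2}(\Omega)}\le\kappa^{-1/2}\norm{v}_{\Omega}$ from \eqref{L2Rbound}, rather than any Poincar\'e-- or energy--type estimate whose constant would degenerate in the convection--dominated regime; one should also double--check that no oscillation term enters with a weight worse than the $\hslash_{K}$--weights of \eqref{eq:oscillation_global_robust}, which is the case because the auxiliary transitions are governed by the reaction and control couplings and not by residual terms.
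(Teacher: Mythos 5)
Your proposal is correct and follows essentially the same route as the paper's own proof: the left-hand inequalities of \eqref{eq:robust_st}--\eqref{eq:robust_ad} combined with the triangle inequality, the inf--sup bounds \eqref{inf_sup_robust}--\eqref{inf_sup_robust*} with \eqref{L2Rbound} to pass from $\hat{\ysf},\hat{\psf}$ to $\bar{\ysf},\bar{\psf}$, and the summed form of \eqref{ct_efficiency} for the control term. The resulting constants (powers of $\kappa^{-1/2}$ only, never $\nu^{-1}$) match those in the paper, so no further changes are needed.
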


\begin{proof}
From \eqref{eq:robust_st} and \eqref{eq:robust_ad} we have that
\[
\mathcal{E}_{\textsf{st}}
\leq 
D_{\ysf}\left(\|\hat{\ysf}-\bar{\ysf}_{\T}\|_{\textsf{R}}+\|\mathrm{osc}^{st} \|_{L^{2}(\Omega)}\right)
\leq
D_{\ysf}\left(\|\hat{\ysf}-\bar{\ysf}\|_{\textsf{R}}+\|\bar{\ysf}-\bar{\ysf}_{\T}\|_{\textsf{R}}+\|\mathrm{osc}^{st} \|_{L^{2}(\Omega)}\right)
\]
and
\[
\mathcal{E}_{\textsf{ad}}
\leq 
D_{\psf}\left(\|\hat{\psf}-\bar{\psf}_{\T}\|_{\textsf{R}}+\|\mathrm{osc}^{ad} \|_{L^{2}(\Omega)}\right)
\leq
D_{\psf}\left(\|\hat{\psf}-\bar{\psf}\|_{\textsf{R}}+\|\bar{\psf}-\bar{\psf}_{\T}\|_{\textsf{R}}+\|\mathrm{osc}^{ad} \|_{L^{2}(\Omega)}\right).
\]
Moreover, \eqref{inf_sup_robust} and \eqref{inf_sup_robust*} imply that
\[
\|\hat{\ysf}-\bar{\ysf}\|_{\textsf{R}}\le 3\sup_{\vsf\in H_{0}^{1}(\Omega)\setminus\{0\}}\frac{\mathcal{B}(\hat{\ysf}-\bar{\ysf},\vsf)}{\norm{\vsf}_{\Omega}}\mbox{ and }
\|\hat{\psf}-\bar{\psf}\|_{\textsf{R}}\le 3\sup_{\vsf\in H_{0}^{1}(\Omega)\setminus\{0\}}\frac{\mathcal{B}^*(\hat{\psf}-\bar{\psf},\vsf)}{\norm{\vsf}_{\Omega}}.
\]
Now, \eqref{optimal_system_1} and \eqref{eq:y_hat} yield that
\[
\mathcal{B}(\hat{\ysf}-\bar{\ysf},\vsf)=(\bar{\usf}_\T-\bar{\usf},\vsf)_{L^2(\Omega)}
\le \|  \bar{\usf} -\bar{\usf}_{\T} \|_{L^2(\Omega)}\| \vsf \|_{L^2(\Omega)}
\le \frac{1}{\sqrt{\kappa}}\|  \bar{\usf} -\bar{\usf}_{\T} \|_{L^2(\Omega)}\norm{\vsf}_{\Omega}
\]
and \eqref{optimal_system_1} and \eqref{eq:p_hat} yield that
\[
\mathcal{B}^*(\hat{\psf}-\bar{\psf},\vsf)=(\bar{\ysf}_\T-\bar{\ysf},\vsf)_{L^2(\Omega)}
\le \|  \bar{\ysf} -\bar{\ysf}_{\T} \|_{L^2(\Omega)}\| \vsf \|_{L^2(\Omega)}
\le \frac{1}{\kappa}\|  \bar{\ysf} -\bar{\ysf}_{\T} \|_{\textsf{R}}\norm{\vsf}_{\Omega}.
\]
Hence
\begin{equation}\label{eq:stRE}
\mathcal{E}_{\textsf{st}}^2
\leq
3D_{\ysf}\left(\frac{9}{\kappa}\|  \bar{\usf} -\bar{\usf}_{\T} \|_{L^2(\Omega)}^2+\|\bar{\ysf}-\bar{\ysf}_{\T}\|_{\textsf{R}}^2+\|\mathrm{osc}^{st} \|_{L^{2}(\Omega)}^2\right)
\end{equation}
and
\begin{equation}\label{eq:adRE}
\mathcal{E}_{\textsf{ad}}^2
\leq
3D_{\psf}\left(\frac{9}{\kappa^2}\|  \bar{\ysf} -\bar{\ysf}_{\T} \|_{\textsf{R}}^2+\|\bar{\psf}-\bar{\psf}_{\T}\|_{\textsf{R}}^2+\|\mathrm{osc}^{ad} \|_{L^{2}(\Omega)}^2\right).
\end{equation}
In addition, \eqref{ct_efficiency} leads to
\begin{equation}\label{eq:ctRE}
\eta_{ct}^2\leq 
2\left(\left\| \bar{\usf}- \bar{\usf}_{\T}  \right\|_{L^2(\Omega)}^2 + 
\frac{2}{\vartheta^2\kappa}
\norm{\bar{\psf}-\bar{\psf}_{\T}}_{\textsf{R}}^2\right).
\end{equation}
The claimed result then follows upon combining \eqref{eq:stRE}, \eqref{eq:adRE} and \eqref{eq:ctRE}.
\end{proof}

\begin{remark}[robusteness]
We remark that the a posteriori error estimator $\Upsilon_{\textsf{R}}$ is robust in the sense that the constants that appear in \eqref{eq:robust_reliability} and \eqref{eq:robust_efficiency} are independent of $\nu$ and $\mathbf{b}$. The estimator is not robust with respect to $\kappa$ or $\vartheta$. The dependence of the constant in \eqref{eq:robust_reliability} on $\kappa$ and $\vartheta$ can be seen from \eqref{eq:robust_control}, \eqref{eq:robust_state}, and \eqref{eq:robust_adjoint}. Similarly, the dependence of the constant in \eqref{eq:robust_efficiency} on $\kappa$ and $\vartheta$ can be seen from \eqref{eq:stRE}, \eqref{eq:adRE} and \eqref{eq:ctRE}.
\end{remark}
}
}

\section{Numerical examples}\label{numerical}
In this section we \RR{show numerical} examples that illustrate the performance of the error estimator. \RR{We wrote a code in \texttt{C++} that implements the procedure described in \textbf{Algorithm 1}}. \RR{The integrals involving the data $\ysf_\Omega$ and $\fsf$ were computed using quadrature formulas which are exact for polynomials of degree $\mathsf{N}$. We show results for $\mathsf{N}\in\{4,19\}$ for $d=2$ and $\mathsf{N}\in\{4,14\}$ for $d=3$.  The error $\|(e_{\bar{\ysf}},e_{\bar{\psf}},e_{\bar{\usf}})\|_{\Omega}$ was computed using a quadrature formula which is exact for polynomials of degree $19$ for $d=2$ and $14$ for $d=3$.} \RR{The global} linear systems were solved using the multifrontal massively parallel sparse direct solver (MUMPS) \cite{MR1856597,MR2202663}. \RR{In order to construct exact solutions we fix the optimal state and adjoint state, and compute the optimal control and data $\ysf_\Omega$ and $\fsf$ using \eqref{projection_formula} and \eqref{optimal_system_1}.}

\begin{table}[!htbp]
\begin{flushleft}
\scalebox{0.7}
{
\begin{tabular}{l l} 
\multicolumn{2}{l}{\textbf{Algorithm 1:  Adaptive Primal-Dual Active Set Algorithm.}} 
\vspace{0.15cm}\\
\toprule
\multicolumn{2}{l}{\textbf{Input:} A mesh $\T$ and data $\lambda$, $\asf$, $\bbsf$, $\nu$, $\bsf$, $\kappa$, $\ysf_{\Omega}$ and $\fsf$.}
\\
\textbf{1:}    &  Compute $(\bar{\ysf}_\T,\bar{\psf}_\T,\bar{\usf}_\T)\in \V(\T)\times \V(\T)\times \mathbb{U}_{ad}(\T)$ that solves \eqref{optimal_system_discrete_1} using the active set strategy of \cite[\S 2.12.4]{MR2583281}.
\\
\textbf{2:}    &  Compute the local error indicator $\Upsilon_K$ given in \eqref{eq:indicatorOC} for each $K \in \T$ and the error estimator $\Upsilon$ given in \eqref{eq:reliability}.
 \\
\textbf{3:}    & Mark an element $K \in \T$ for refinement if $\Upsilon_K^{2}\ge\Upsilon^{2}/\#\T$.
\\
\textbf{4:}    & Refine the mesh $\T$ using a longest edge bisection algorithm and return to step \textbf{1}.
\\
\bottomrule
\end{tabular}}
\vspace{-0.3cm}
\end{flushleft}
\end{table}
%%%%%%%%%%%%%%%%%%%%%%%%%%%%%%%%%%%%%%%%%%%%%%%%%%%%%%%%%%
%%%%%%%%%%%%%%%%%%%%%%%%%%%%%%%%%%%%%%%%%%%%%%%%%%%%%%%%%%
%%%%%%%%%%%%%%%%%%%%%%%%%%%%%%%%%%%%%%%%%%%%%%%%%%%%%%%%%%
%%%%%%%%%%%%%%%%%%%%%%%%%%%%%%%%%%%%%%%%%%%%%%%%%%%%%%%%%%
%%%%%%%%%%%%%%%%%%%%%%%%%%%%%%%%%%%%%%%%%%%%%%%%%%%%%%%%%%
%%%%%%%%%%%%%%%%%%%%%%%%%%%%%%%%%%%%%%%%%%%%%%%%%%%%%%%%%%
%%%%%%%%%%%%%%%%%%%%%%%%%%%%%%%%%%%%%%%%%%%%%%%%%%%%%%%%%%

\textbf{\RR{Example 1}:} We set $d=2$, $\asf=-1$, $\bbsf=-0.1$, $\lambda=1$, $\nu=10^{-3}$, $\bsf=(1,0)$ and $\kappa=1$. The exact optimal state and adjoint are given by, taking $\varsigma:=x_{2}(1-x_{2})$,
\begin{gather*}
\bar{\ysf}(x_{1},x_{2})=\varsigma\left(x_{1}+\frac{e^{\tfrac{x_{1}-1}{\nu}}-e^{\tfrac{-1}{\nu}}}{e^{\tfrac{-1}{\nu}}-1}\right),~
\bar{\psf}(x_{1},x_{2})=\varsigma\left(1-x_{1}+\frac{e^{\tfrac{-x_{1}}{\nu}}-e^{\tfrac{-1}{\nu}}}{e^{\tfrac{-1}{\nu}}-1}\right).
\end{gather*}  
\textbf{\RR{Example 2}:} We set $d=3$, $\asf=-0.01$, $\bbsf=0.01$, $\lambda=1$, $\nu=0.01$, $\bsf=(3,2,1)$ and $\kappa=10$. The exact optimal state and adjoint are given by
\begin{gather*}
\bar{\ysf}(x_{1},x_{2},x_{3})=\prod_{i=1}^{3}x_{i}(1-x_{i}),\quad
\bar{\psf}(x_{1},x_{2},y_{3})=\bar{\ysf}(x_{1},x_{2},y_{3})\textrm{tan}^{-1}\left(\frac{x_{1}-0.5}{\nu}\right).
\end{gather*}
%%%%%%%%%%%%%%%%%%%%%%%%%%%%%%%%%%%%%%%%%%%%%%%%%%%%%%%%%%
%%%%%%%%%%%%%%%%%%%%%%%%%%%%%%%%%%%%%%%%%%%%%%%%%%%%%%%%%%
%%%%%%%%%%%%%%%%%%%%%%%%%%%%%%%%%%%%%%%%%%%%%%%%%%%%%%%%%%
%%%%%%%%%%%%%%%%%%%%%%%%%%%%%%%%%%%%%%%%%%%%%%%%%%%%%%%%%%
%%%%%%%%%%%%%%%%%%%%%%%%%%%%%%%%%%%%%%%%%%%%%%%%%%%%%%%%%%
%%%%%%%%%%%%%%%%%%%%%%%%%%%%%%%%%%%%%%%%%%%%%%%%%%%%%%%%%%
%%%%%%%%%%%%%%%%%%%%%%%%%%%%%%%%%%%%%%%%%%%%%%%%%%%%%%%%%%
In Figures \RR{\ref{FigE1} and \ref{FigE2}}, we present the performance of the adaptive procedure by \RR{showing the} total error $\|(e_{\bar{\ysf}},e_{\bar{\psf}},e_{\bar{\usf}})\|_{\Omega}$ and error estimator $\Upsilon$, as well as effectivity indices $\Upsilon/\|(e_{\bar{\ysf}},e_{\bar{\psf}},e_{\bar{\usf}})\|_{\Omega}$, for different combinations of stabilizations for state--adjoint equations. We use the following notation: SUPG--GLS corresponds to using a SUPG stabilization for the state equation and a GLS stabilization for the adjoint equation; SUPG--SUPG, SUPG--CIP and SUPG--ES are defined analogously. We took the stabilization parameters $\tau^{*}_{K}=\tau_{K}$ where
$$
\tau_{K}=\left\{
\begin{array}{cl}
\tfrac{h_{K}}{2\|\bsf\|_{\boldsymbol{L}^{\infty}(K)}} & \textrm{if}~Pe_{K}>1,\\
\tfrac{h_{K}^{2}}{12\nu} & \textrm{if}~Pe_{K}\leq 1,
\end{array}
\right.
\quad\textrm{with}\quad
Pe_{K}:=\frac{\|\bsf\|_{\boldsymbol{L}^{\infty}(K)}h_{K}}{2\nu},
$$
for SUPG and GLS, $\tau^{*}_{\gamma}=1/24$ for ES and $\tau^{*}_{\gamma}=h_\gamma^2/12$ for CIP. \RR{The total number of degrees of freedom $\mathsf{Ndof} = 2 \dim(\V(\T)) + \#\T$. In Figures \ref{FigE1} and \ref{FigE2}, we observe that, once the mesh has been sufficiently refined, the experimental rates of convergence for the error are optimal. Computationally, we observe that the estimator is never less than the error, the effectivity index never goes below $1$; on the final meshes takes the numerical value stated in the plots. In Figures \ref{Ex1Meshes} and \ref{Ex2Meshes} we observe that the refinement is being concentrated around the boundary and interior layers, even though different values of $\mathsf{N}$ resulted in different adaptively refined meshes.}
%%%%%%%%%%%%%%%%%%%%%%%%%%%%%%%%%%%%%%%%%%%%%%%%%%%%%%%%%%
%%%%%%%%%%%%%%%%%%%%%%%%%%%%%%%%%%%%%%%%%%%%%%%%%%%%%%%%%%
%%%%%%%%%%%%%%%%%%%%%%%%%%%%%%%%%%%%%%%%%%%%%%%%%%%%%%%%%%
%%%%%%%%%%%%%%%%%%%%%%%%%%%%%%%%%%%%%%%%%%%%%%%%%%%%%%%%%%
%%%%%%%%%%%%%%%%%%%%%%%%%%%%%%%%%%%%%%%%%%%%%%%%%%%%%%%%%%
%%%%%%%%%%%%%%%%%%%%%%%%%%%%%%%%%%%%%%%%%%%%%%%%%%%%%%%%%%
%%%%%%%%%%%%%%%%%%%%%%%%%%%%%%%%%%%%%%%%%%%%%%%%%%%%%%%%%%

\psfrag{example N - total error concha tu madre}{\Huge Example 1 - $\|(e_{\bar{\ysf}},e_{\bar{\psf}},e_{\bar{\usf}})\|_{\Omega}$}
\psfrag{example N - total estimator concha tu madre}{\Huge Example 1 - Estimator $\Upsilon$}
\psfrag{example N - effectivity index concha tu madre}{\Huge Example 1 - $\Upsilon/\|(e_{\bar{\ysf}},e_{\bar{\psf}},e_{\bar{\usf}})\|_{\Omega}$}
\psfrag{SUPG - ES}{\Large SUPG-ES}
\psfrag{SUPG - CIP}{\Large SUPG-CIP}
\psfrag{SUPG - GLS}{\Large SUPG-GLS}
\psfrag{SUPG - SUPG}{\Large SUPG-SUPG}
\psfrag{optimal rate}{\Large Optimal rate}
\psfrag{Ndofs}{\Large Ndof}
\begin{figure}[!htbp]
\begin{center}
\scalebox{0.3}{\includegraphics{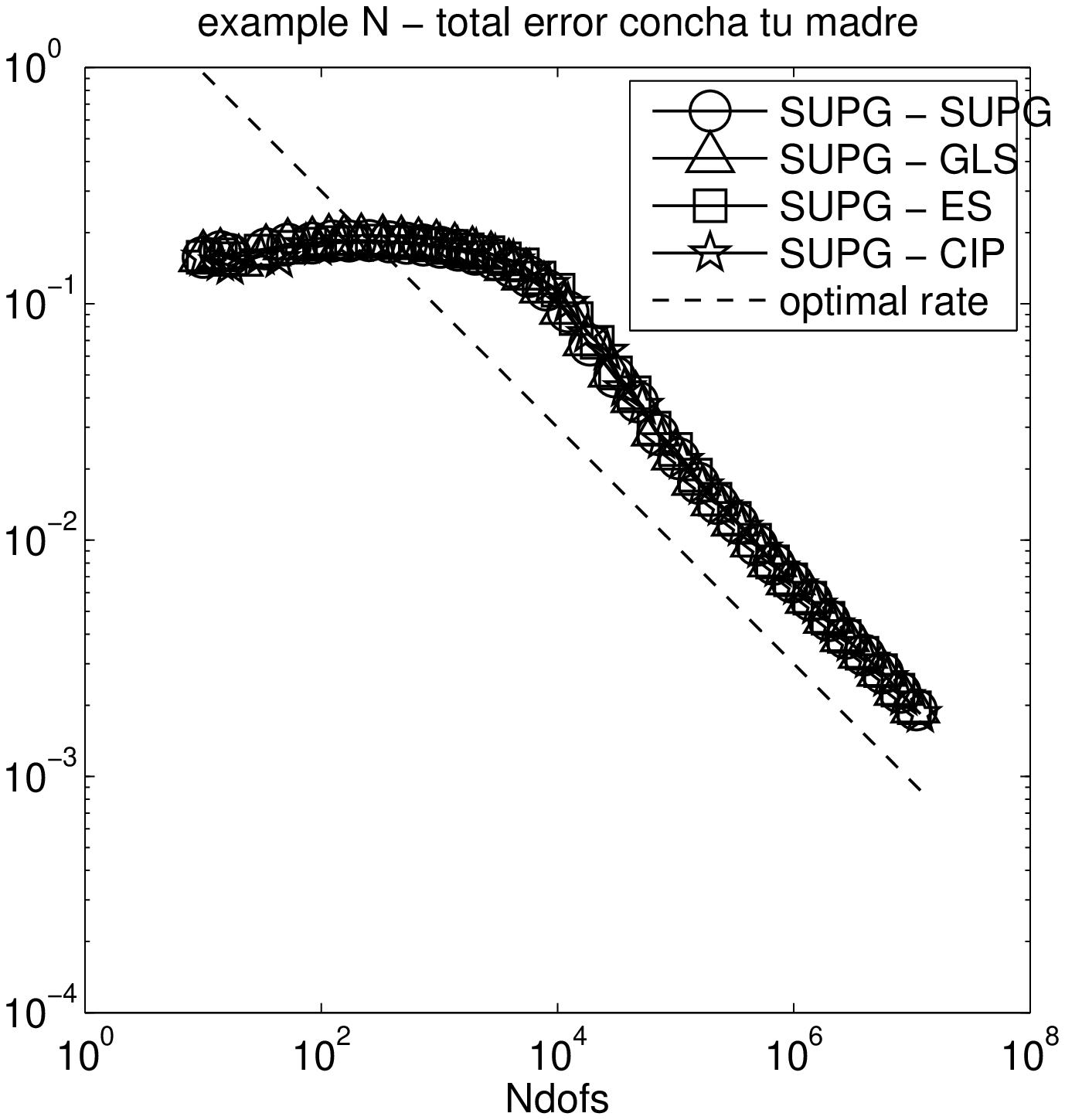}}
\scalebox{0.3}{\includegraphics{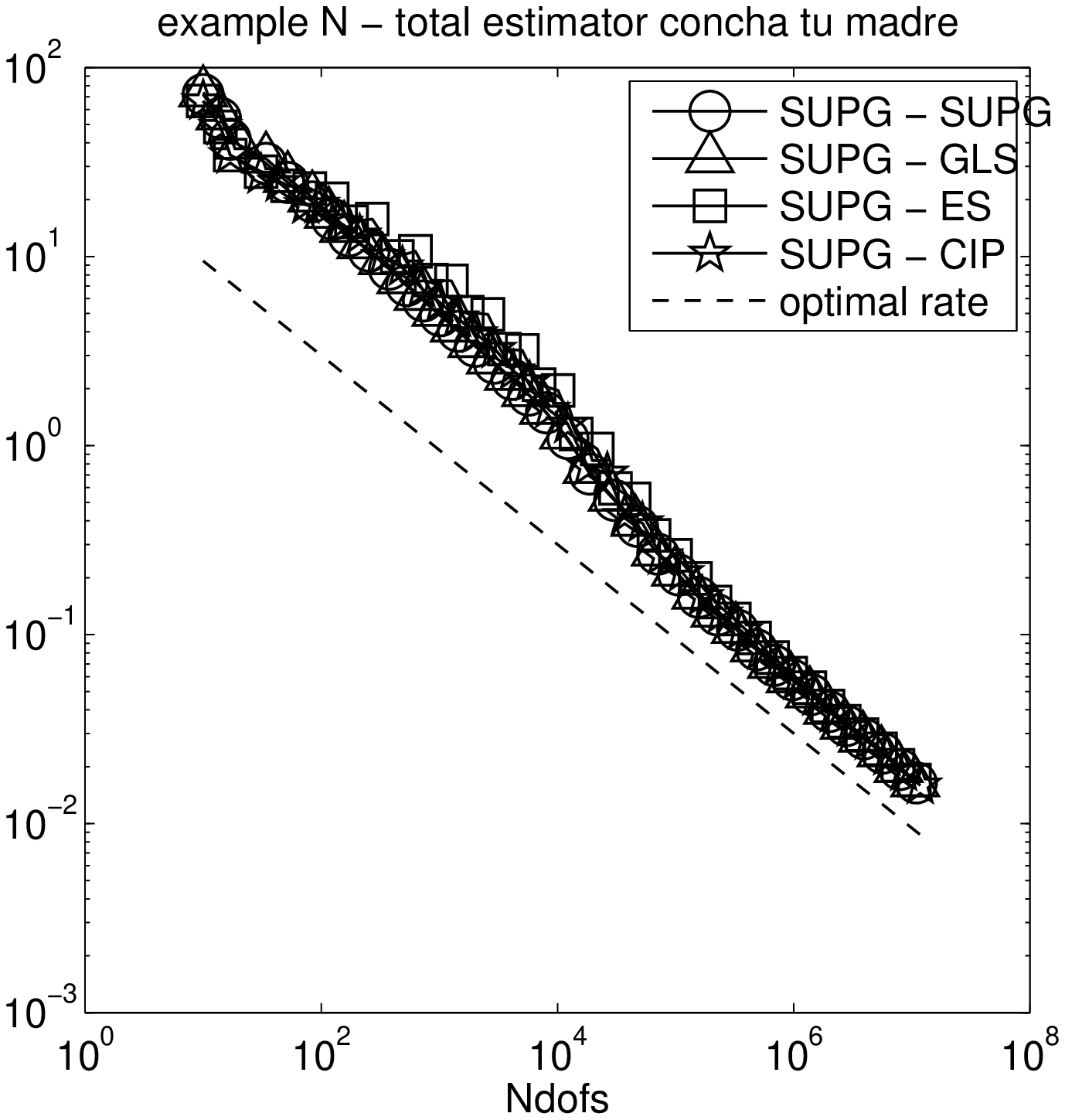}}
\scalebox{0.3}{\includegraphics{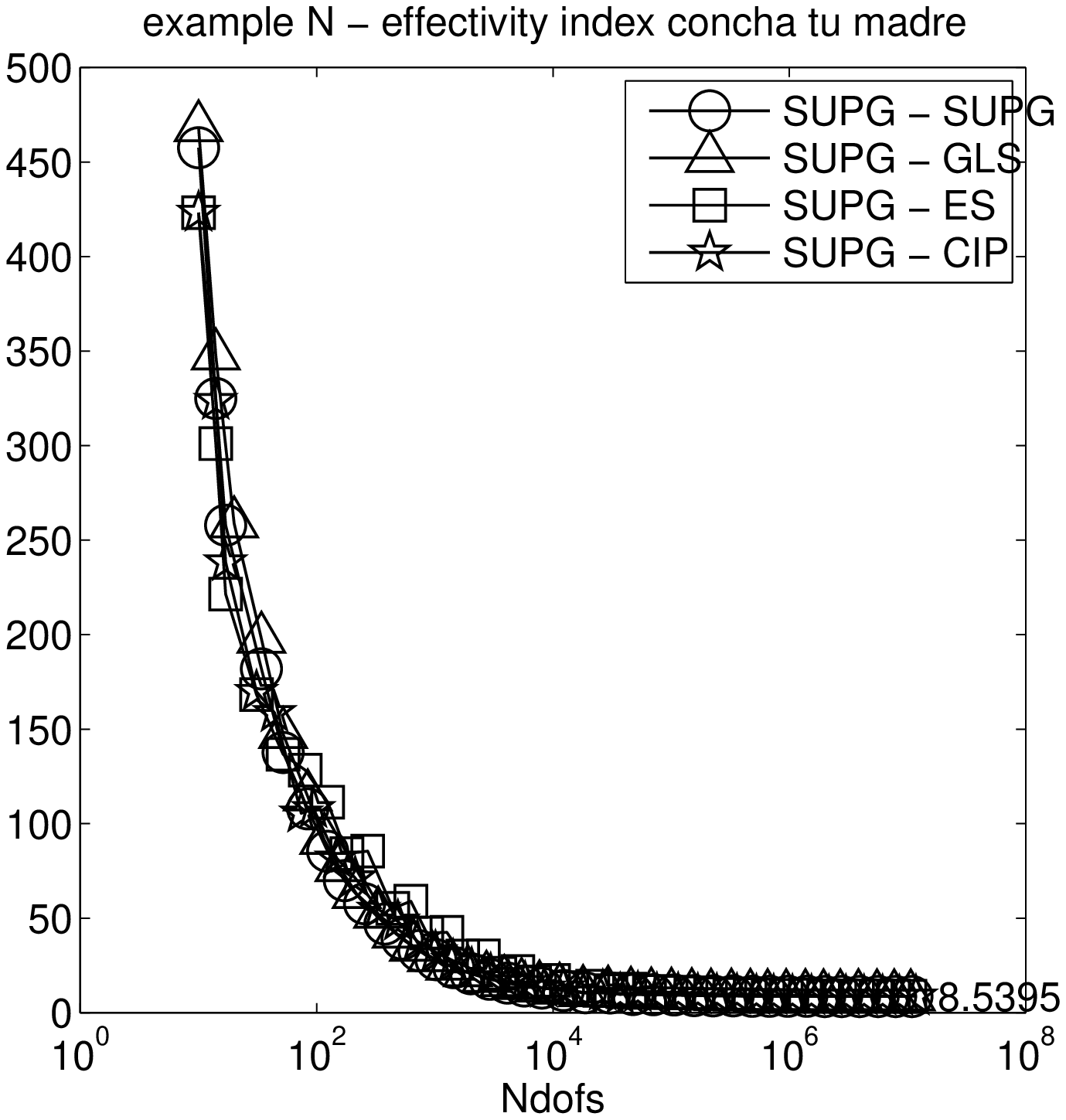}}
\scalebox{0.3}{\includegraphics{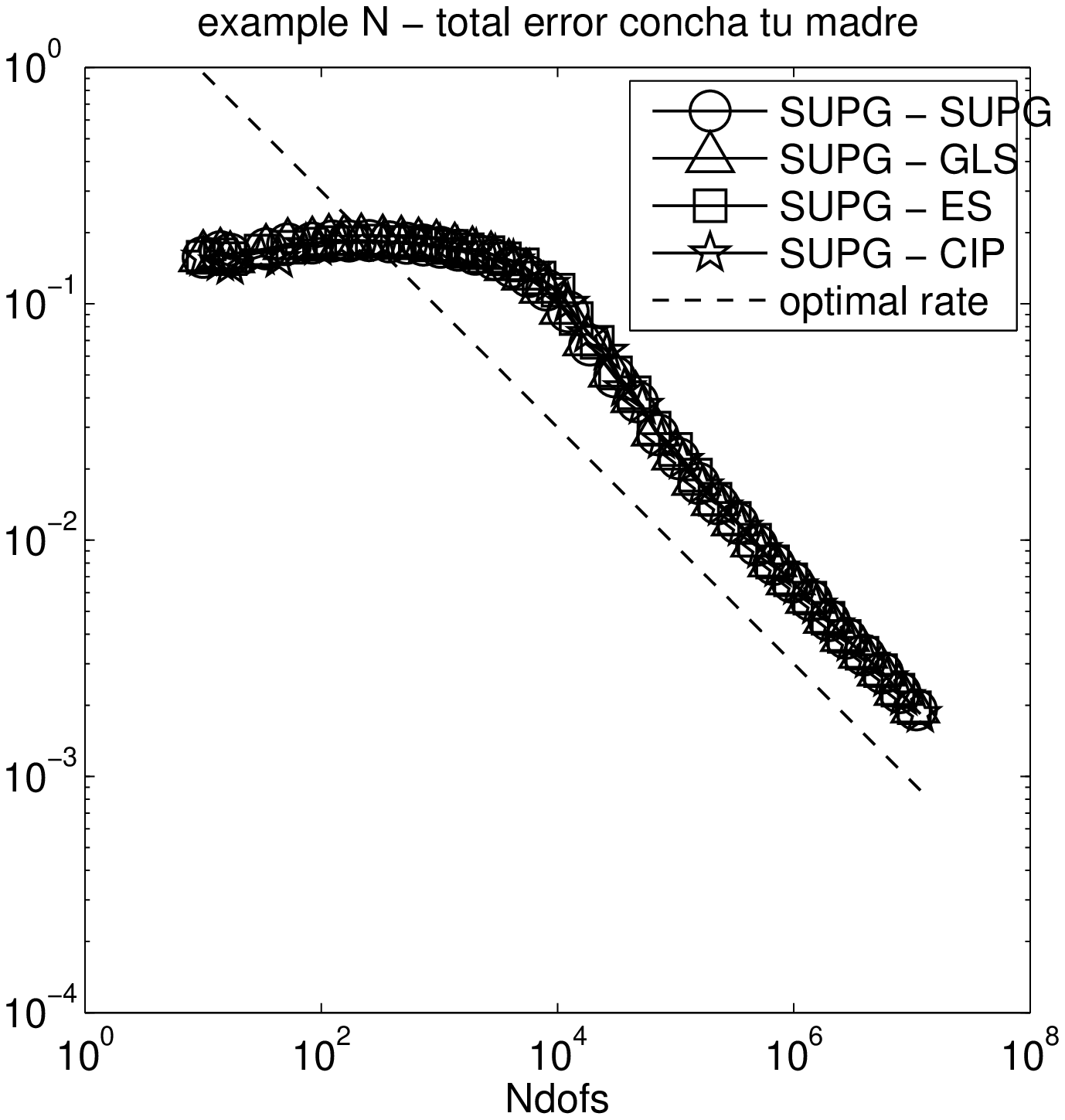}}
\scalebox{0.3}{\includegraphics{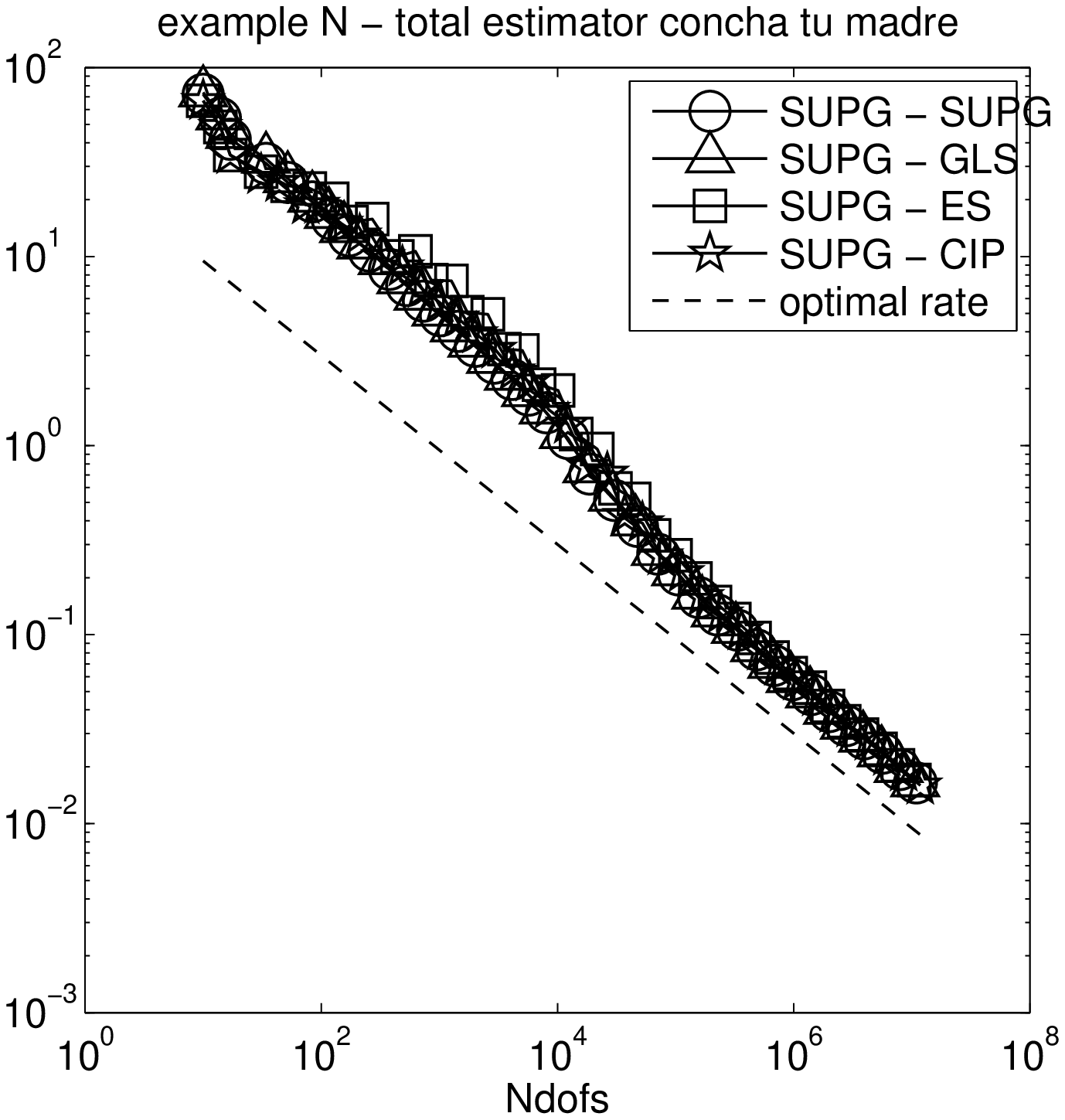}}
\scalebox{0.3}{\includegraphics{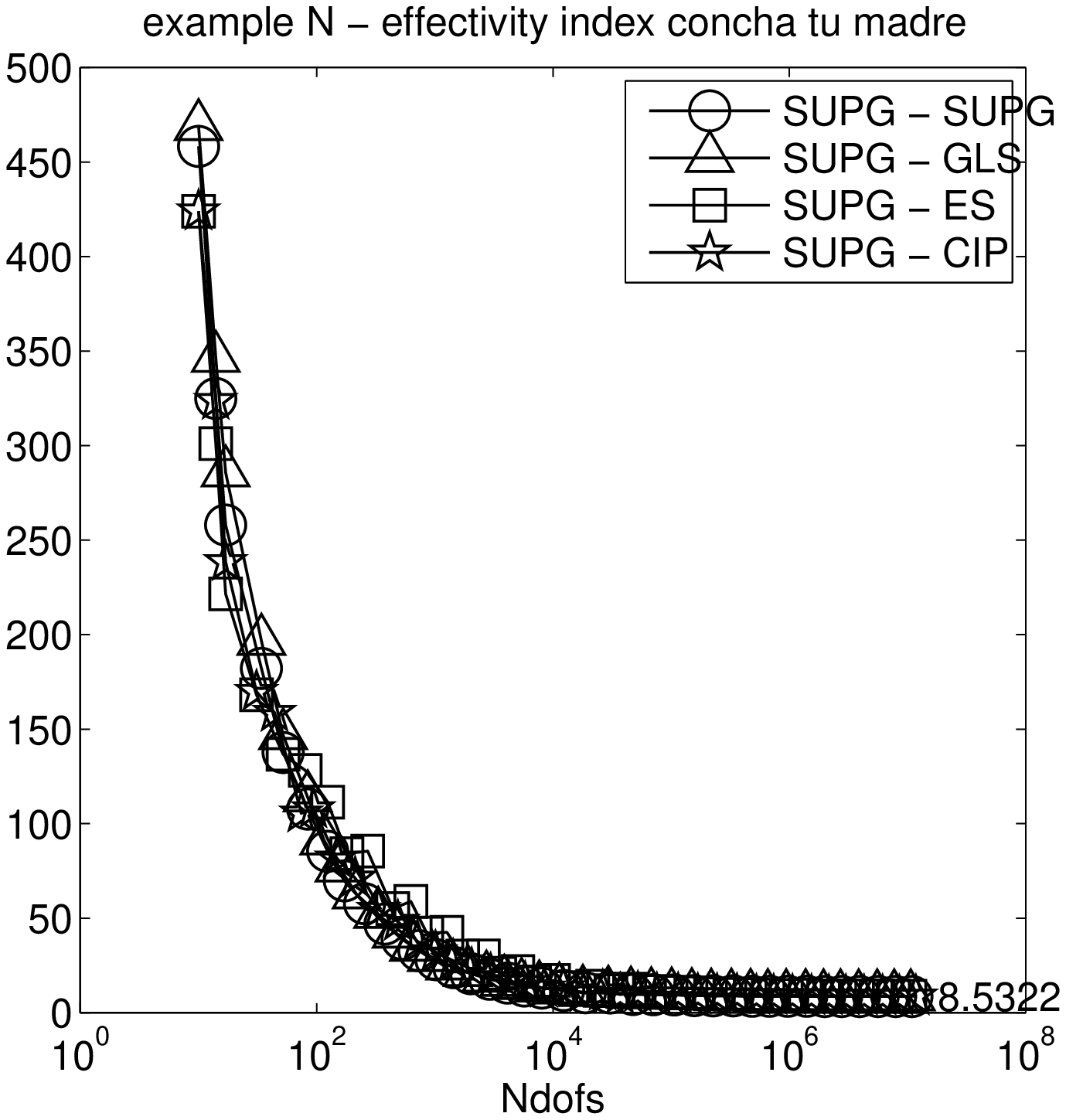}}
\end{center}
\caption{Example 1: The error $\|(e_{\bar{\ysf}},e_{\bar{\psf}},e_{\bar{\usf}})\|_{\Omega}$, estimator $\Upsilon$ and effectivity indices $\Upsilon/\|(e_{\bar{\ysf}},e_{\bar{\psf}},e_{\bar{\usf}})\|_{\Omega}$ obtained with $\mathsf{N}=19$ (top) and $\mathsf{N}=4$ (bottom).}
\label{FigE1}
\end{figure}

\begin{figure}[!htbp]
\begin{center}
\scalebox{0.3}{\includegraphics{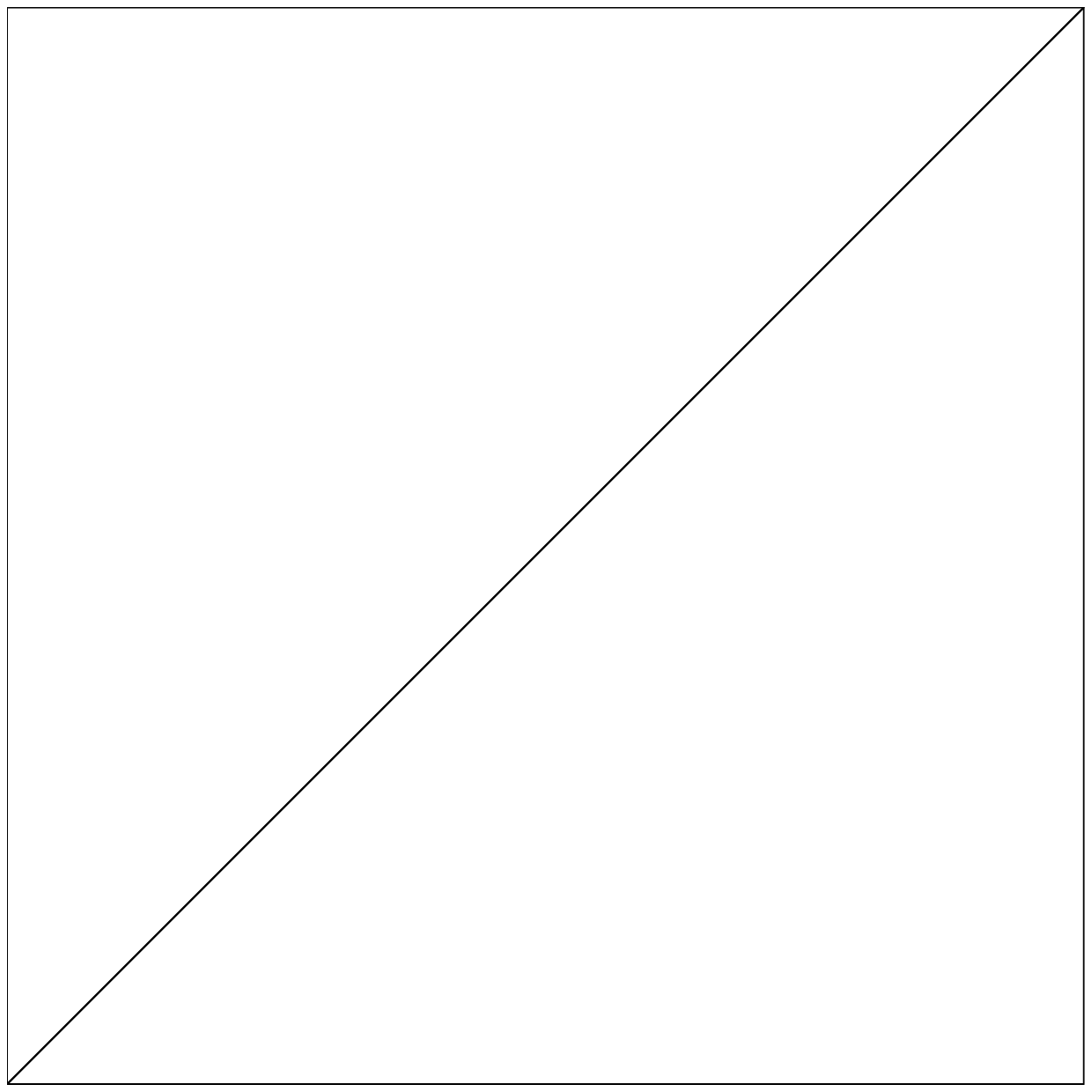}}
\scalebox{0.3}{\includegraphics{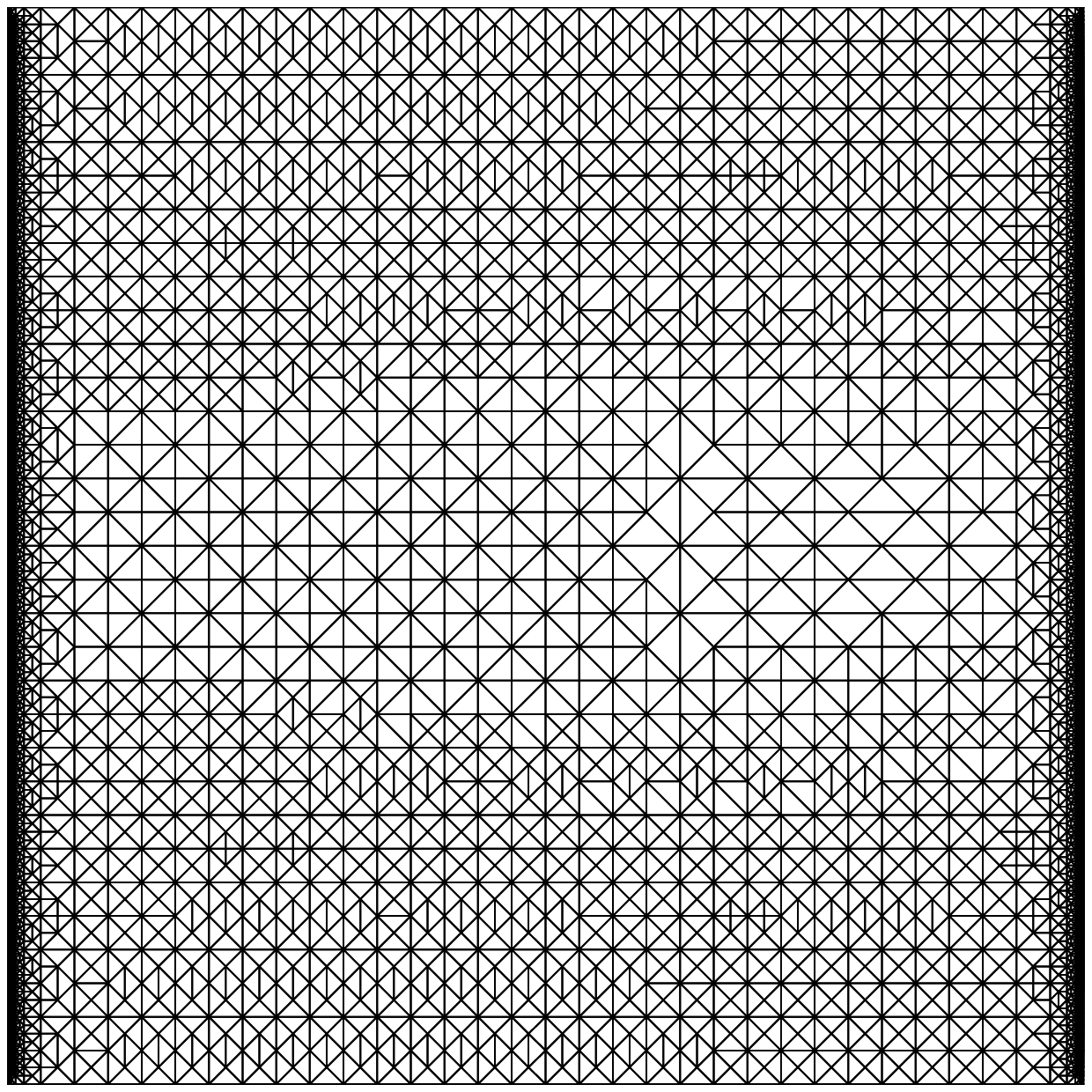}}
\scalebox{0.3}{\includegraphics{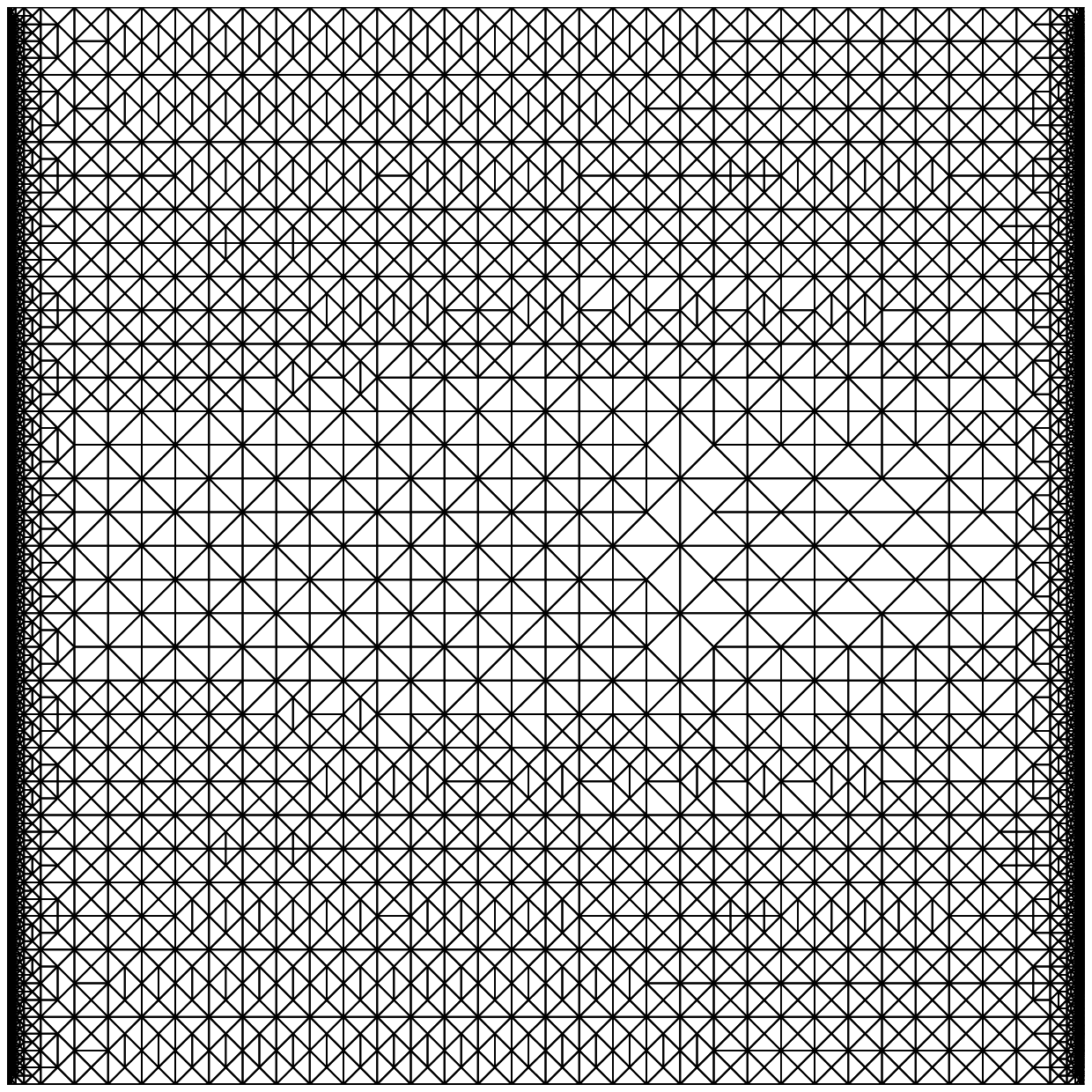}}
\end{center}
\caption{Example 1: The initial mesh (left) and the 25th adaptively refined meshes obtained with $\mathsf{N}=19$ (middle, $\mathsf{Ndof}=156476$) and $\mathsf{N}=4$ (right, $\mathsf{Ndof}=156508$).}
\label{Ex1Meshes}
\end{figure}

\psfrag{example N - total error concha tu madre}{\Huge Example 2 - $\|(e_{\bar{\ysf}},e_{\bar{\psf}},e_{\bar{\usf}})\|_{\Omega}$}
\psfrag{example N - total estimator concha tu madre}{\Huge Example 2 - Estimator $\Upsilon$}
\psfrag{example N - effectivity index concha tu madre}{\Huge Example 2 - $\Upsilon/\|(e_{\bar{\ysf}},e_{\bar{\psf}},e_{\bar{\usf}})\|_{\Omega}$}
\psfrag{SUPG - ES}{\Large SUPG-ES}
\psfrag{SUPG - CIP}{\Large SUPG-CIP}
\psfrag{SUPG - GLS}{\Large SUPG-GLS}
\psfrag{SUPG - SUPG}{\Large SUPG-SUPG}
\psfrag{optimal rate}{\Large Optimal rate}
\psfrag{Ndofs}{\Large Ndof}
\begin{figure}[!htbp]
\begin{center}
\scalebox{0.3}{\includegraphics{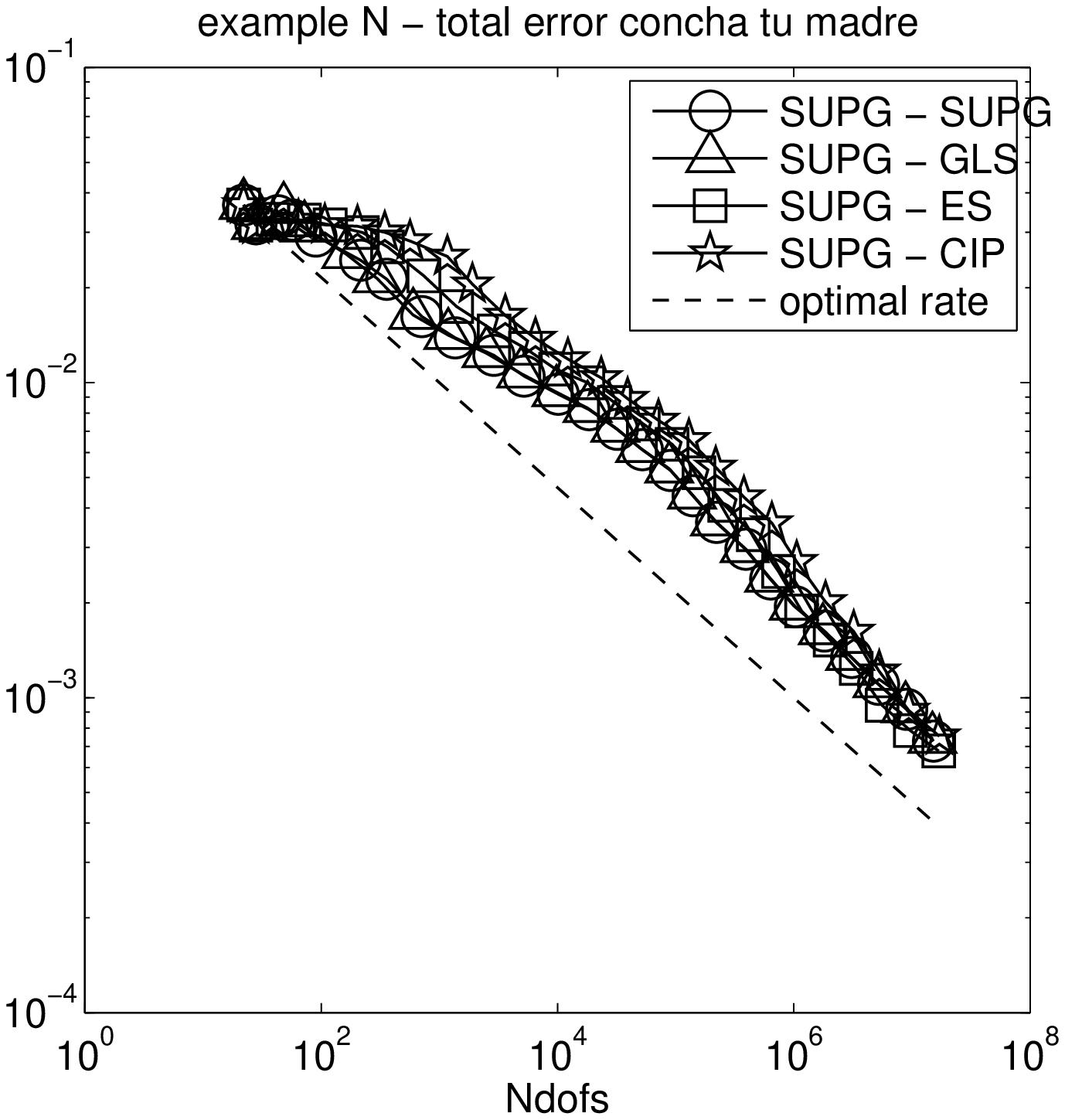}}
\scalebox{0.3}{\includegraphics{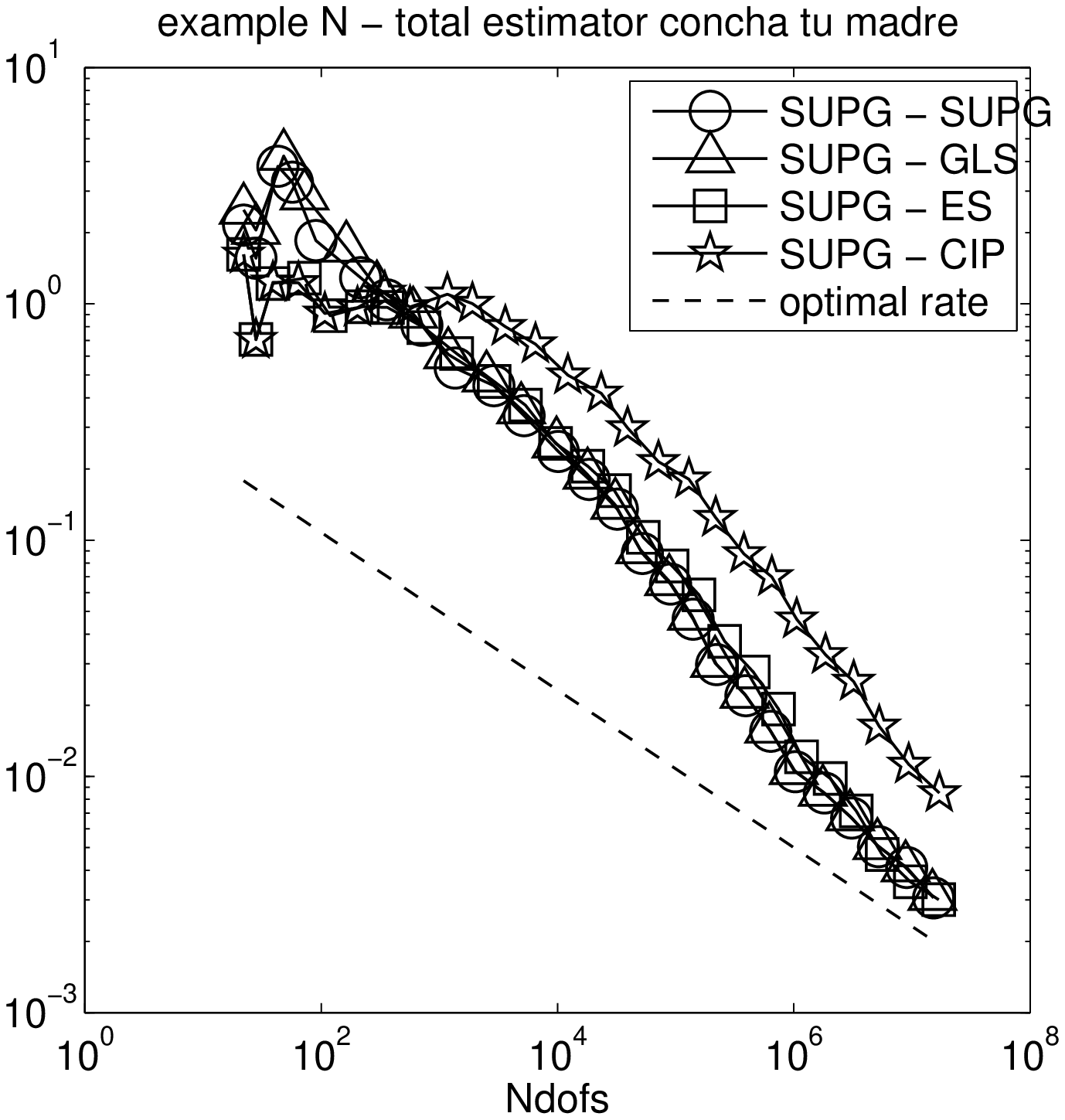}}
\scalebox{0.3}{\includegraphics{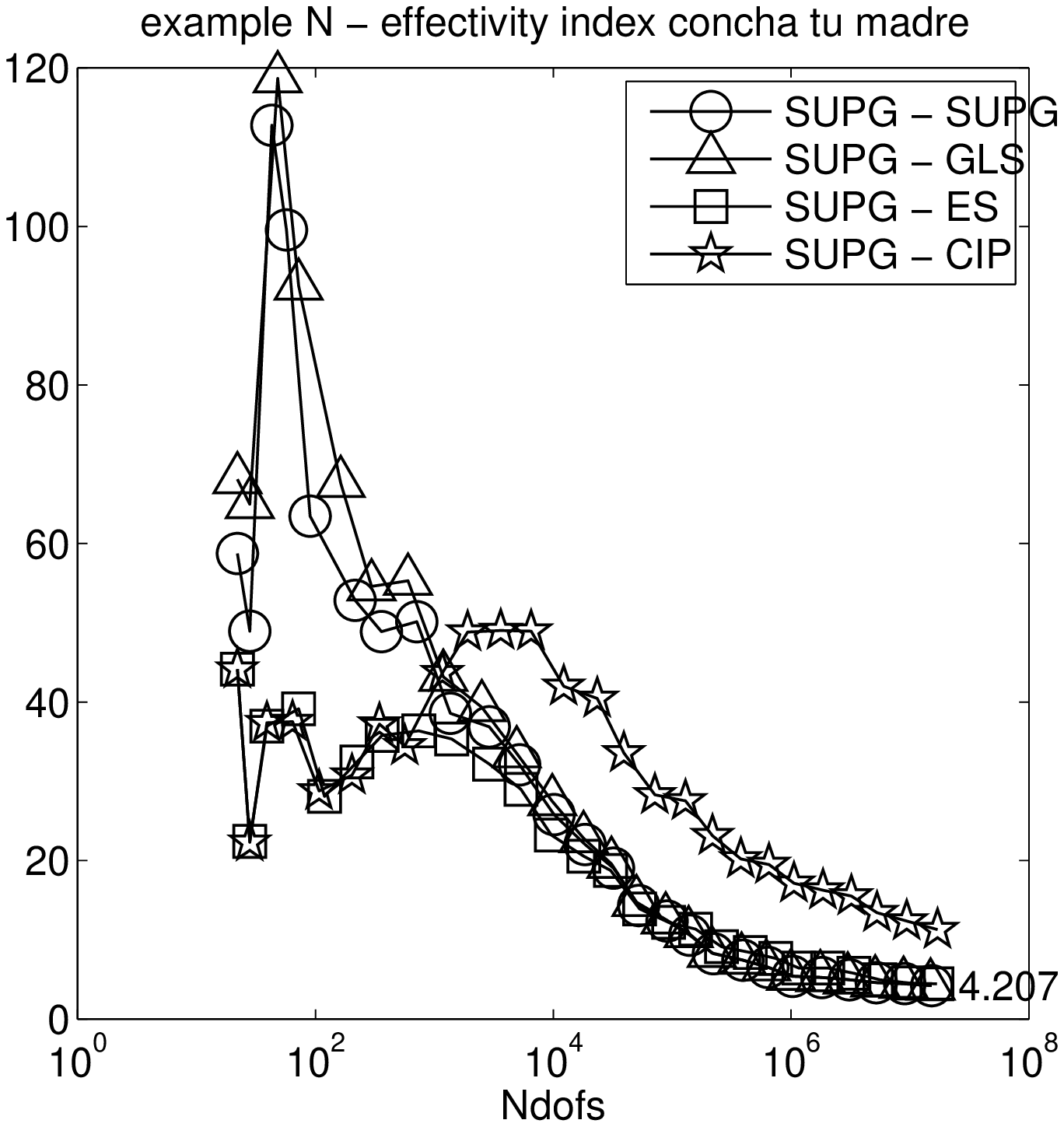}}
\scalebox{0.3}{\includegraphics{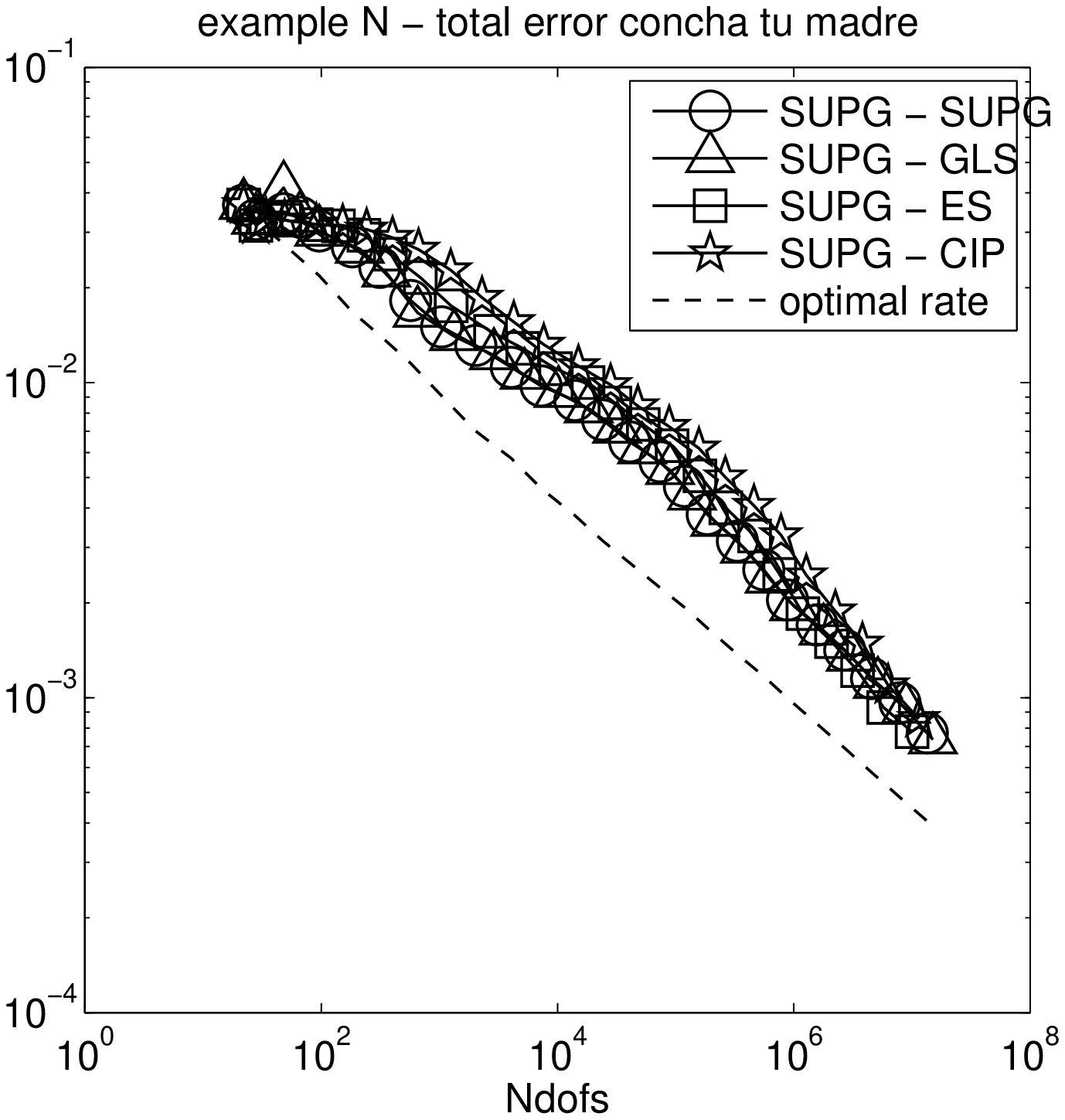}}
\scalebox{0.3}{\includegraphics{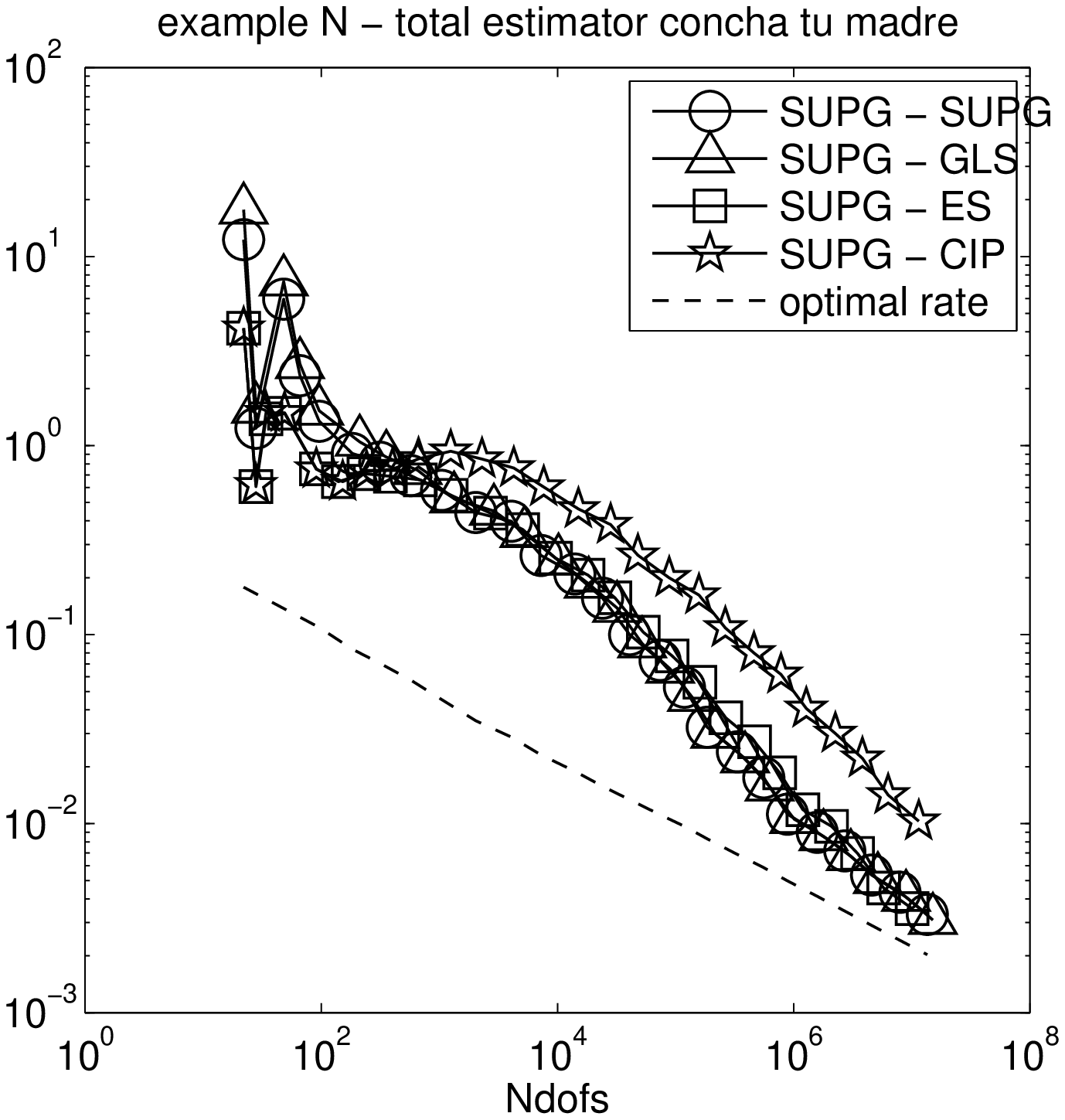}}
\scalebox{0.3}{\includegraphics{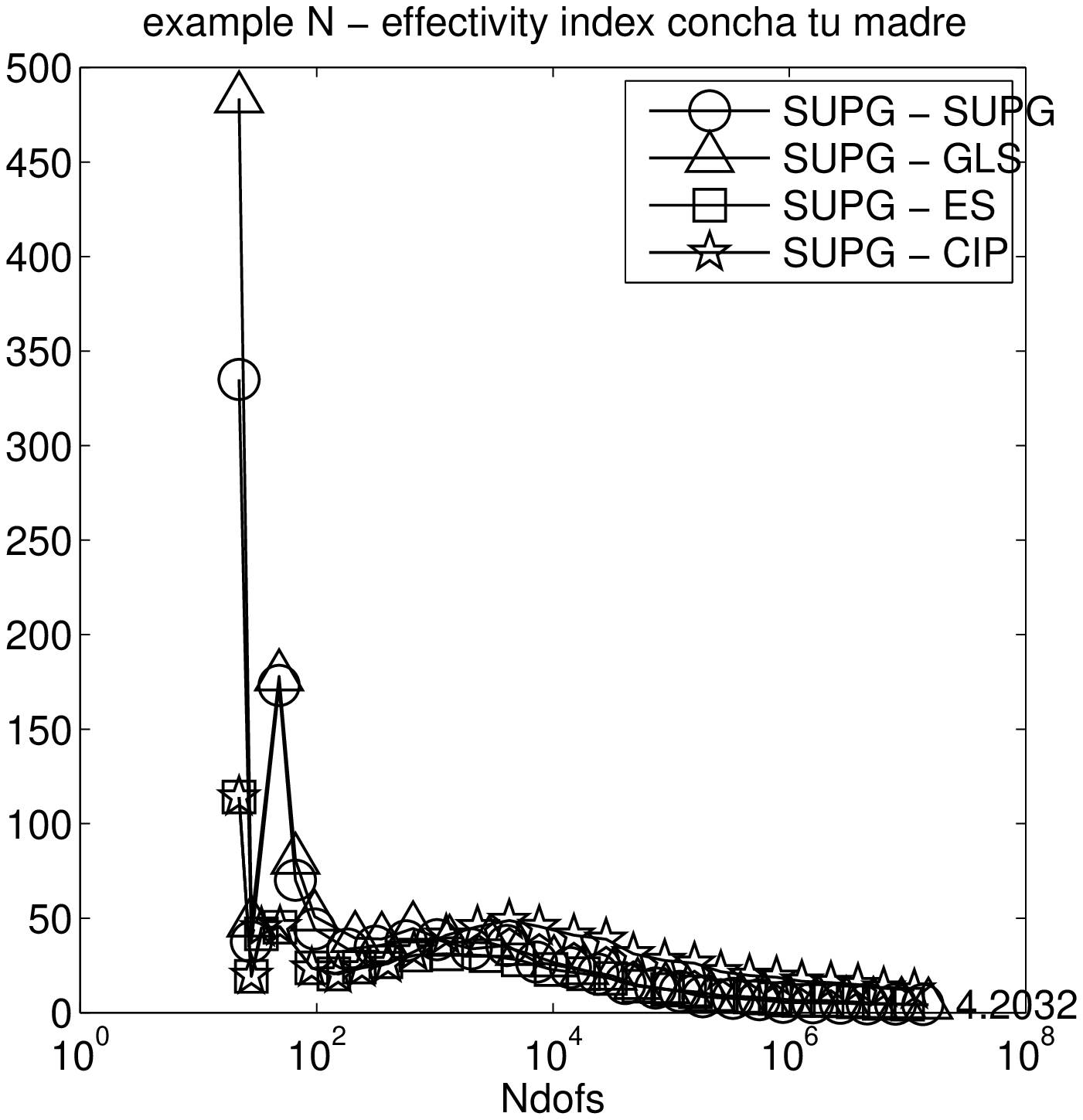}}
\end{center}
\caption{Example 2: The error $\|(e_{\bar{\ysf}},e_{\bar{\psf}},e_{\bar{\usf}})\|_{\Omega}$, estimator $\Upsilon$ and effectivity indices $\Upsilon/\|(e_{\bar{\ysf}},e_{\bar{\psf}},e_{\bar{\usf}})\|_{\Omega}$ obtained with $\mathsf{N}=14$ (top) and $\mathsf{N}=4$ (bottom).}
\label{FigE2}
\end{figure}

\begin{figure}[!htbp]
\begin{center}
\scalebox{0.25}{\includegraphics{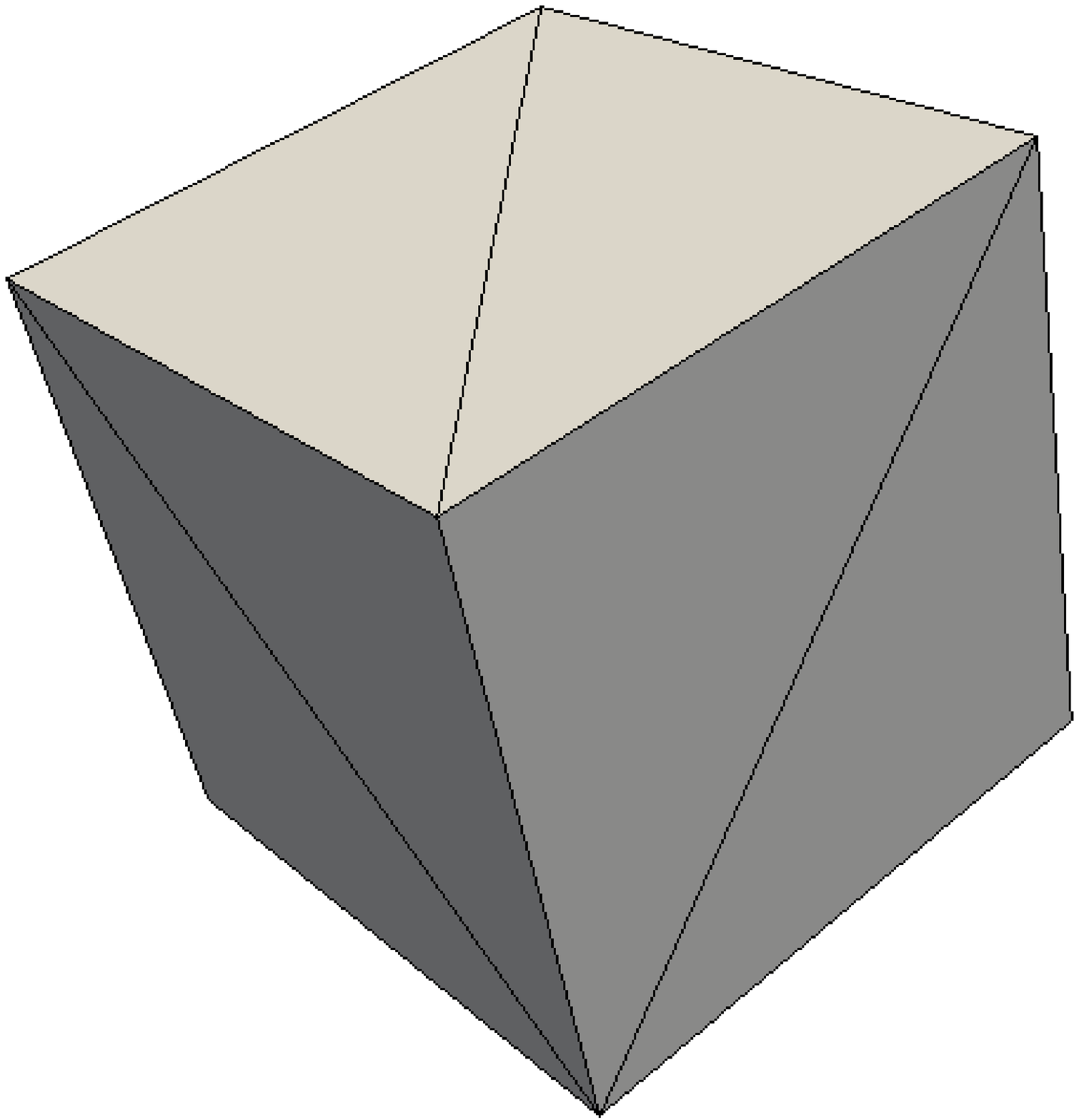}}
\scalebox{0.25}{\includegraphics{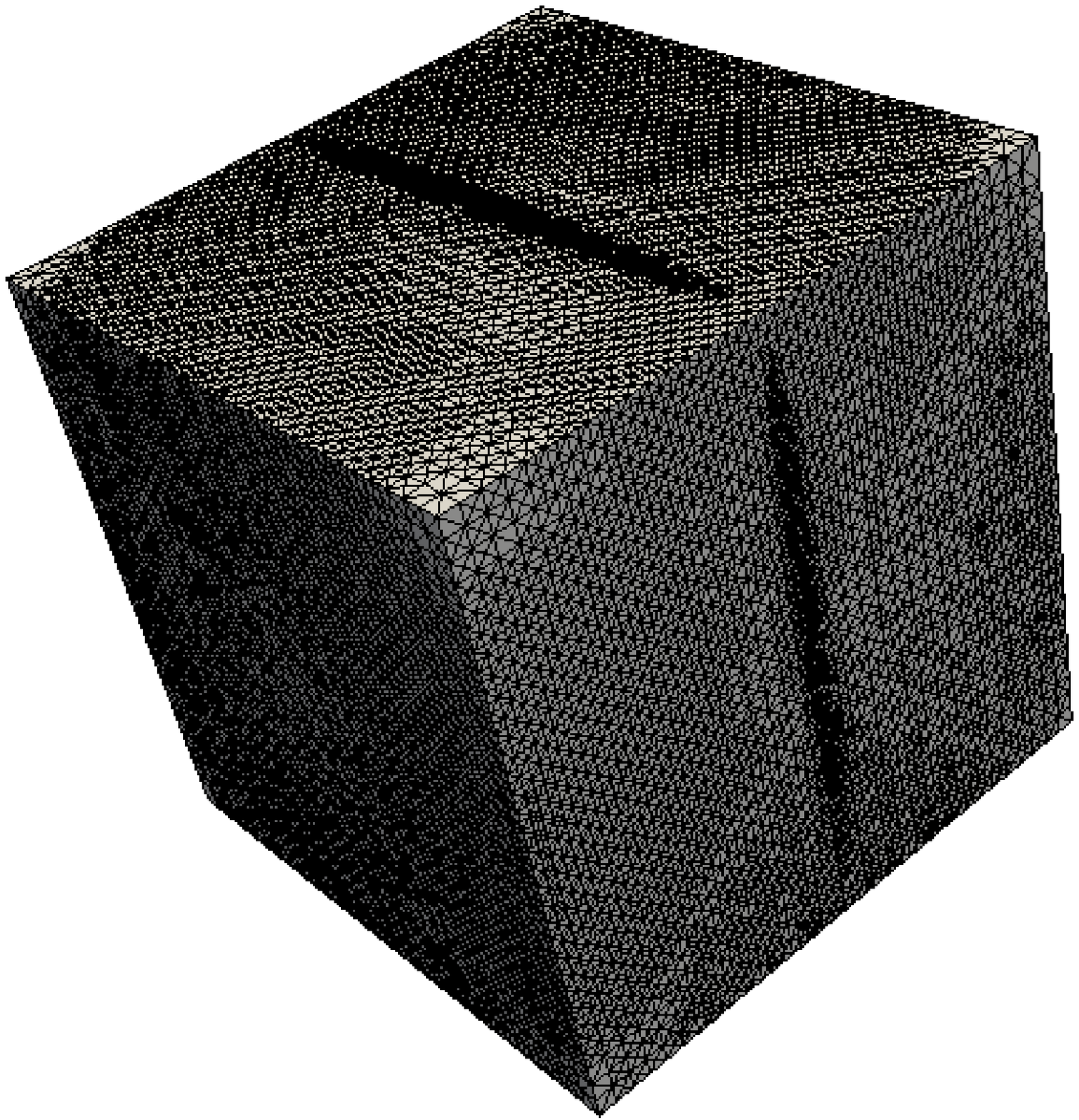}}
\scalebox{0.25}{\includegraphics{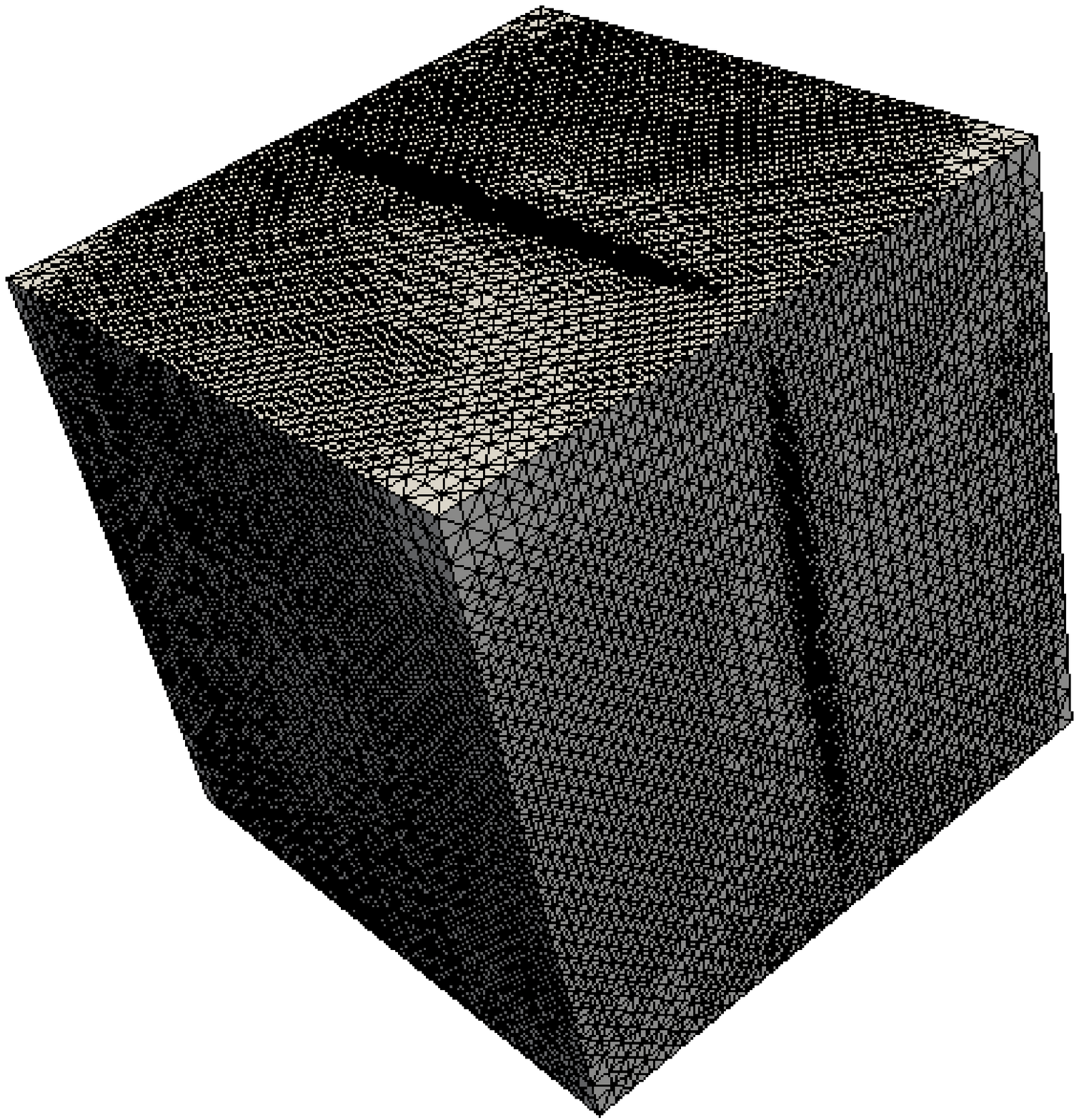}}
\end{center}
\caption{Example 2: The initial mesh (left) and the 25th adaptively refined meshes obtained with $\mathsf{N}=14$ (middle, $\mathsf{Ndof}=15261784$) and $\mathsf{N}=4$ (right, $\mathsf{Ndof}=13511617$).}
\label{Ex2Meshes}
\end{figure}

\section*{Acknowledgment}
The authors are indebted to Gabriel Barrenechea for his comments on an earlier version of this manuscript.
\bibliographystyle{siam}
\bibliography{bibliography}

\end{document}